%% file: mec.tex
\documentclass [12pt,a4paper]{amsart}
\pdfoutput=1

\usepackage[utf8]{inputenc}
\usepackage[T1]{fontenc}
\usepackage{lmodern}
\usepackage{microtype}
\usepackage[svgnames]{xcolor}

\usepackage {amssymb}
\usepackage {amsmath}
\usepackage {amsthm}
\usepackage{mathtools}
\usepackage{xfrac}
\usepackage{cancel}

\usepackage{tikz,graphicx}
\usetikzlibrary{arrows.meta,patterns}

\usepackage{thmtools}
\usepackage{thm-restate}

\usepackage[top=1.2in, bottom=1.2in, left=1.72in, right=1.72in]{geometry}

\usepackage{url}

\usepackage{bm}

\usepackage{enumitem}

\usepackage{xfrac}

\usepackage{hyperref}
\hypersetup{
    colorlinks=true,
    linkcolor=blue,
    filecolor=magenta,
    urlcolor=cyan,
    citecolor=magenta
}

\usepackage{aliascnt}
\usepackage{cleveref}

\newcommand {\td} {\mathrm{td}}

\newcommand {\cl} {\mathrm{cl}}
\newcommand {\im} {\mathrm{Im}}
\newcommand {\pr} {\mathrm{pr}}

\newcommand {\SL} {\mathrm{SL}}
\newcommand {\PSL} {\mathrm{PSL}}
\newcommand {\GL} {\mathrm{GL}}
\newcommand {\PGL} {\mathrm{PGL}}

\newcommand {\alg} {\mathrm{alg}}

\newcommand {\rk} {\mathrm{rk}}

\newcommand {\seq} {\subseteq}
\newcommand {\C} {\mathbb{C}}
\newcommand {\R} {\mathbb{R}}
\newcommand {\Z} {\mathbb{Z}}
\newcommand {\Q} {\mathbb{Q}}

\newcommand{\h}{\mathbb{H}_{\pm}}
\newcommand{\uh}{\mathbb{H}_+}
\newcommand{\lh}{\mathbb{H}_-}

\newcommand{\p}{\mathbb{P}}

\newcommand{\zbar}{\mathbf{z}}
\newcommand{\xbar}{\mathbf{x}}
\newcommand{\ybar}{\mathbf{y}}
\newcommand{\rbar}{\mathbf{r}}
\newcommand{\wbar}{\mathbf{w}}
\newcommand{\fbar}{\mathbf{f}}
\newcommand{\Fbar}{\mathbf{F}}
\newcommand{\gbar}{\mathbf{g}}
\newcommand{\Wbar}{\mathbf{W}}
\newcommand{\habar}{\mathbf{h}}
\newcommand{\cbar}{\mathbf{c}}
\newcommand{\vbar}{\mathbf{v}}
\newcommand{\ebar}{\mathbf{e}}

\newcommand{\etabar}{\ensuremath{\bm{\eta}}}
\newcommand{\zetabar}{\ensuremath{\bm{\zeta}}}
\newcommand{\taubar}{{\ensuremath{\bm{\tau}}}}
\newcommand{\xibar}{{\ensuremath{\bm{\xi}}}}
\newcommand{\thebar}{\ensuremath{\bm{\theta}}}

\newcommand{\gr}{\mathrm{Graph}}

\theoremstyle {plain}
\newtheorem {theorem}{Theorem}[section]

\newtheorem {lemma} [theorem] {Lemma}
\newtheorem {fact} [theorem] {Fact}
\newtheorem {proposition} [theorem] {Proposition}
\newtheorem {corollary} [theorem] {Corollary}
\newtheorem {conjecture} [theorem] {Conjecture}
\newtheorem {claim} {Claim}
\newtheorem* {claim*}{Claim}

\theoremstyle {definition}

\newtheorem {definition}[theorem]{Definition}
\newtheorem{example} [theorem] {Example}

\theoremstyle {remark}
\newtheorem {remark} [theorem] {Remark}
\numberwithin{equation}{section}

\newenvironment{claimproof}{\bgroup\begin{proof}}{\end{proof}\egroup}

\calclayout

\begin{document}

\title{Modular Existential Closedness}

\author{Vahagn Aslanyan}
\email{vahagn.aslanyan@manchester.ac.uk}
\address{Department of Mathematics, University of Manchester, Manchester, UK}

\date{}

\thanks{This work is supported by EPSRC Open Fellowship EP/X009823/1. For the purpose of open access, the author has applied a Creative Commons Attribution (CC BY) licence to any Author Accepted Manuscript version arising from this submission.}

\keywords{Existential closedness, $j$-function, Ax--Schanuel, Ratner's theorems, Rouché's theorem}

\subjclass[2020]{11F03, 11F23, 37D40, 11U09}


\begin{abstract}
 We prove the Modular Existential Closedness conjecture for the modular $j$-function extended from the upper half-plane to $\C \setminus \R$ by Schwarz reflection. Specifically, we show that any system of equations involving the field operations and $j$ has solutions in the complex numbers unless it is overdetermined. Such a system is overdetermined if, roughly, there are more independent equations than variables, possibly after applying the transformation rules of the $j$-function under the linear fractional action of $\PGL_2(\Q)$ on $\C \setminus \R$. In general, we do not expect overdetermined systems of equations to have solutions due to the algebraic and transcendental properties of the $j$-function captured by the modular analogue of Schanuel's conjecture. In geometric terms, Modular Existential Closedness states that given a broad and free irreducible algebraic variety $V \subseteq \C^n \times \C^n$ -- where broadness and freeness are geometric conditions guaranteeing that the system is not overdetermined -- there is a point of the form $(\tau_1, \ldots, \tau_n, j(\tau_1), \ldots, j(\tau_n)) \in V$ with $\tau_i \in \C \setminus \R$. 

The proof uses Ratner's theorems on the closure and equidistribution of the orbits of unipotent flows in powers of $\PGL_2(\R)$ to describe the asymptotic behaviour of the orbits of certain analytic maps. This is then applied to approximate a relatively simple system of equations using the original system and the action of $\PGL_2(\Z)$. The construction of the limiting system makes use of the Ax--Schanuel theorem and guarantees an isolated solution. Rouch\'e's theorem then establishes the existence of solutions for the original system of equations.
\end{abstract}

\maketitle

\setcounter{tocdepth}{1}

\tableofcontents

\section{Introduction and overview}

\subsection{Statement of the main theorem}

Let $\uh := \{ z \in \C: \im z > 0 \}$ and $\lh := \{ z \in \C: \im z < 0 \}$ denote the upper and lower half-planes of the complex numbers, respectively, and let $\h := \uh \cup \lh = \C \setminus \R$. The modular $j$-function is defined on $\uh$, but we extend it to $\h$ by Schwarz reflection, i.e. \[ \text{for } z \in \lh \text{ set } j(z) := \overline{j(\bar{z})}. \]

For a positive integer $n$ we let \[ \gr_n(j) := \{ (z_1, \ldots, z_n, j(z_1), \ldots, j(z_n)): z_i \in \h \text{ for all } i \} .\] By abuse of notation, we usually drop the subscript $n$ and just write $\gr(j)$ where the ambient dimension is clear from the context.

The following is the main result of this paper (varieties are assumed to be irreducible; broadness and freeness are discussed below).

\begin{restatable}{theorem}{mainthm}
\label{thm: EC full}
   Let $V \seq \C^n \times \C^n$ be a broad and free variety. Then $V \cap \gr(j) \neq \emptyset$.
\end{restatable}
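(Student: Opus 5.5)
The plan follows the strategy sketched in the abstract: degenerate $V$, using the action of $\PGL_2(\Z)$ and a limiting procedure, to a simpler variety $W$ whose intersection with $\gr(j)$ contains an isolated point, and then pull that solution back to $V$ with Rouché's theorem. First reduce to $\dim V = n$. Intersecting $V$ with generic hyperplanes (or fibres of a generic linear projection) keeps broadness and freeness, by a standard argument using the Ax--Schanuel theorem for $j$, and the resulting intersection with $\gr(j)$ is expected to be discrete. Next, fix a generic point $\zbar$ of the projection of $V$ to the first $n$ coordinates. Over a neighbourhood of $\zbar$, choose an analytic branch $\fbar$ of $V$, so that points of $V$ near the fibre look like $(\zbar, \fbar(\zbar))$. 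The equation to solve is then $\fbar(\zbar) = j(\zbar)$, where $j$ is applied coordinatewise on $\h^n$.

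To exploit modularity, act by tuples $\gbar = (g_1, \ldots, g_n) \in \PGL_2(\Z)^n$, or rather by sequences of them, and consider the translated maps $\zbar \mapsto \gbar \zbar$. Since $j(g z) = j(z)$, the system $\fbar(\zbar) = j(\zbar)$ is equivalent to $\fbar(\gbar^{-1}\wbar) = j(\wbar)$. I would choose $\gbar_k$ along a unipotent direction, for instance $\gbar_k \cdot \thebar$ with $\thebar$ ranging over a real curve. The key dynamical input is an asymptotic description of the orbits $\Gamma^n \gbar_k (\text{analytic curve})$ in $\Gamma^n \backslash \PGL_2(\R)^n$: they equidistribute toward homogeneous subspaces, and by Ratner's orbit closure and equidistribution theorems (in Shah's form for analytic curves) these subspaces are products of diagonal $\PGL_2(\R)$-orbits, with the diagonals determined by $\PGL_2(\Q)$-relations. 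Freeness is exactly what rules out unexpected relations among the coordinates. Consequently the translated varieties $\gbar_k^{-1} V$ have, along a subsequence, a limit $W$ that is defined algebraically, or at least by "simple" analytic conditions such as certain coordinates being constant.

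The heart of the proof is showing that the limit system $W \cap \gr(j)$ has an isolated solution. Broadness of $V$ should imply that $W$ is still broad in a weak sense, with projections of dimension at least that of the base. Using Ax--Schanuel, one then selects the degeneration, that is, which coordinates go to the cusp or to a fixed point, so that $W$ meets $\gr(j)$ in a point at which the Jacobian of $\fbar_W - j$ is invertible. The natural first attempt is an induction on $n$ in which one coordinate at a time is frozen, with Ax--Schanuel used to exclude positive-dimensional intersections. Given an isolated solution $\wbar_0$ of the limit system, the functions $\fbar_k(\gbar_k^{-1}\wbar) - j(\wbar)$ converge uniformly to the limit on a small polydisc around $\wbar_0$. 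The multivariable Rouché theorem then yields a zero for large $k$, and this zero gives a point of $V \cap \gr(j)$.

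The main obstacle is the middle step: controlling the limit of $\gbar_k^{-1}V$ via Ratner theory so that it stays nondegenerate, meaning it is neither empty nor overdetermined, and producing an isolated solution there. In particular, one must make sure the limiting process does not lose dimension in a way that destroys broadness. Handling the lower half-plane through Schwarz reflection, which is needed because $\PGL_2$ rather than $\SL_2$ acts, is a technical but secondary point.
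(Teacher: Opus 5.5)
\textbf{There is a genuine gap: your reduction only covers the dominant case, and the step you call ``the main obstacle'' is left open.}

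Your second step already breaks down. Writing the points of $V$ near a fibre as $(\zbar,\fbar(\zbar))$, with $\zbar$ ranging over an open subset of $\C^n$, presupposes that $\pr_{\xbar}V$ is dominant. That case is the Eterovi\'c--Herrero theorem, and it needs no dynamics at all. You push $\zbar$ towards a real point $\rbar$ with $\PGL_2(\Z)^n$; the limit system $j(\zbar)=\fbar(\rbar)$ obviously has isolated solutions, and Rouch\'e finishes.

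A broad and free $V$ of dimension $n$ can perfectly well have $\dim\pr_{\xbar}V=m<n$. An example is $x_2=x_1^2$, $y_1=Q(x_1,y_2)$, and then the problem is not of the form $\fbar(\zbar)=j(\zbar)$ at all. The missing structural step is the one where broadness is actually used. The paper takes a transcendence basis $x_1,\dots,x_m,y_{m+1},\dots,y_n$ of $\C(V)$ with the triangular property $x_k\in\C(x_1,\dots,x_m,y_{m+1},\dots,y_{k-1})^{\alg}$ (\Cref{lem: broad pregeom}). This reduces everything to the $m$ equations $j(z_l)=Q_l(\zbar,j(Z_{m+1}(\zbar)),\dots,j(Z_n(\zbar)))$, where $Z_k=P_k(\zbar,j(Z_{m+1}),\dots,j(Z_{k-1}))$ is defined recursively. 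In your proposal broadness appears only as the hope that the limit $W$ ``is still broad in a weak sense'', so it never does any work.

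The second missing idea is what to do with a dependent coordinate $Z_k$ that is real-valued on real inputs. As $\zbar$ is pushed to a real point, $j(Z_k)$ has no limit, so the limiting system is not obtained by freezing coordinates. A general equidistribution statement for analytic curves in $\Gamma^n\backslash\PGL_2(\R)^n$ does not tell you what replaces $j(Z_k)$.

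The paper's key input is a simultaneous M\"obius approximation (\Cref{thm: density of analytic orbits general version}). For such functions $F_k$ and \emph{arbitrary} prescribed $h_k\in\PGL_2(\R)$, there are $\zeta_{l,s},\gamma_{k,s}\in\PGL_2(\Z)$ with $\zeta_{l,s}z\to r_l$ and $\gamma_{k,s}F_k(\zetabar_s\zbar)\to h_kz_{\sigma(k)}$ locally uniformly. Here the $\gamma_{k,s}$ act inside $j$ on the dependent coordinates, independently of the $\zeta_{l,s}$; your single tuple $\gbar$ cannot express this. The proof has three parts:
\begin{itemize}
\item match each $F_k$ to second order at a generic real point $r$ with a M\"obius map $g_k$;
\item use Ratner's theorems to show that $\Gamma^{N+1}(I,g_1,\dots,g_N)\Delta_{N+1}(H_r)$ is dense in $\PSL_2(\R)^{N+1}$, which needs $r,g_1(r),\dots,g_N(r)$ irrational and in distinct $\PGL_2(\Q)$-orbits (this comes from freeness plus Ax--Schanuel);
\item pass from $g_k$ back to $F_k$ by a Taylor estimate based on $\|\gamma_s\|=O(t_s)$.
\end{itemize}
Each undefined $j(Z_k)$ is thereby replaced by $j(h_kz_{\sigma(k)})$, with a free parameter $\theta_k=j(h_k\tau_{\sigma(k)})$. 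Choosing $(\rbar,\thebar)$ outside a meagre set is what lets Ax--Schanuel and freeness make the limit solution isolated.

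Without these ingredients, the step you flag as ``the main obstacle'' is the entire content of the theorem. What remains is Eterovi\'c--Herrero plus Rouch\'e.

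Finally, the lower half-plane is not a secondary technicality. The version with solutions only in $\uh$ is false: for $z_1=z_3+iR$, $z_2=z_4+iR$, $j(z_1)j(z_2)=1$ with $R$ large, every solution has a coordinate in $\lh$.
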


Observe that understanding which algebraic varieties intersect the graph of $j$ is equivalent to understanding which systems of equations involving $+, \cdot, j$ have solutions in $\C$.  For instance, the equation \[ j(j(z^2)) = z\] has a solution if and only if the variety $V \seq \C^3 \times \C^3$ defined by the equations \[ x_2=x_1^2, x_3 = y_2, y_3 = x_1 \] intersects $\gr(j)$.

Thus, \Cref{thm: EC full} is about the solvability of systems of equations involving the field operations and the modular $j$-function. In the model theory of analytic functions, this problem is usually referred to as \emph{Modular Existential Closedness}. It sits naturally at the boundary between model theory and complex geometry. We provide an overview of \emph{Existential Closedness} in \S\ref{subsec: Overview of EC}. 

\textit{Broadness} and \textit{freeness} are geometric conditions capturing the algebraic and (hypothetical) transcendental properties of $j$ that do not allow solutions for certain systems of equations. These notions are defined, explained, and justified below. But first we observe that \Cref{thm: EC full} implies a seemingly stronger version of itself.

\begin{corollary}
    Let $V \seq \C^n \times \C^n$ be a broad and free variety. Then $V \cap \gr(j)$ is Zariski dense in $V$.
\end{corollary}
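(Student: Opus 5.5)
The plan is the standard reduction of Zariski density to mere nonemptiness: cut $V$ by a generic hypersurface. Suppose $V \cap \gr(j)$ is not Zariski dense in $V$. Then there is a proper Zariski closed $W \subsetneq V$ containing $V \cap \gr(j)$, so some polynomial $f \in \C[\xbar,\ybar]$ vanishes on $V \cap \gr(j)$ but not identically on $V$. We will produce a new broad and free variety $V'$ whose points in $\gr(j)$ are exactly the points of $V \cap \gr(j)$ with $f \neq 0$. Applying \Cref{thm: EC full} to $V'$ then gives a contradiction.

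To kill the locus $f=0$, add one new coordinate pair $(x_{n+1}, y_{n+1})$ and set
\[ V' := \{ (\xbar, x_{n+1}, \ybar, y_{n+1}) : (\xbar,\ybar) \in V,\ y_{n+1} \cdot f(\xbar,\ybar) = 1 \} \seq \C^{n+1}\times\C^{n+1}. \]
This is the Rabinowitsch trick, with the extra variable placed in the $j$-coordinate; $x_{n+1}$ is left free. $V'$ is irreducible, since it is isomorphic to $(V\setminus\{f=0\}) \times \C$, and $\dim V' = \dim V + 1$. A point of $V' \cap \gr(j)$ projects to a point of $V \cap \gr(j)$ with $f \neq 0$. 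Such a point exists as soon as $V'$ is broad and free.

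The main work is verifying broadness and freeness of $V'$, which depends on the definitions given later in the paper. I expect them to be of the following standard form:
\begin{itemize}
\item \emph{broadness}: for every coordinate projection $\pr_I$ onto the coordinates $(x_i,y_i)_{i\in I}$, one has $\dim \pr_I V \geq |I|$;
\item \emph{freeness}: no coordinate $x_i$ is constant on $V$, and no relation $\Phi_N(y_i,y_k)=0$ or $x_i = g x_k$ with $g \in \GL_2^+(\Q)$ holds identically on $V$.
\end{itemize}

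For broadness, take $I \seq \{1,\dots,n+1\}$.
\begin{itemize}
\item If $n+1 \notin I$, then $\pr_I V' $ is dense in $\pr_I V$, because $V'$ maps onto the dense open set $V\setminus\{f=0\}$.
\item If $n+1 \in I$, write $I = I_0 \cup \{n+1\}$. The coordinate $x_{n+1}$ is free, so $\dim \pr_I V' \geq \dim \pr_{I_0} V + 1 \geq |I_0|+1 = |I|$.
\end{itemize}

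For freeness, the old coordinates behave as on $V$, again because $V'$ dominates $V$. The new coordinate $x_{n+1}$ is unconstrained, so it is nonconstant and satisfies no $\GL_2^+(\Q)$-relation with any other $x_k$. Hence no modular relation involving index $n+1$ can hold identically on $V'$.

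The main obstacle is the version of freeness that also forbids constant $y$-coordinates, or more general special subvarieties. If $f$ is chosen badly, $y_{n+1} = 1/f$ might be forced to be constant. To avoid this I would replace $f$ by a generic element of the ideal of $W$ that is nonconstant on $V$, for instance $f + c\,h$ for a suitable $h$. If needed, I would instead intersect $V$ with the pullback of a generic hypersurface, using that a generic hyperplane section preserves irreducibility and the dimension inequalities (Bertini). Whichever exact definitions the paper adopts, the argument should only need that broadness and freeness pass to dominant covers with one added free coordinate and a generic open restriction.
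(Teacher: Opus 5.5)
Your overall strategy is the paper's: reduce to a single inequation $f\neq 0$, apply the Rabinowitsch trick, and invoke \Cref{thm: EC full}. Your broadness check is fine. The freeness check, however, has a genuine gap at the sentence ``Hence no modular relation involving index $n+1$ can hold identically on $V'$''. That inference does not follow. Freeness imposes separate conditions on the $\xbar$- and $\ybar$-coordinates, and the genericity of $x_{n+1}$ says nothing about $y_{n+1}$. On your $V'$, $y_{n+1}$ is just the function $1/f\in\C(V)$ pulled back from $V$. A priori $\Phi_N(y_i,1/f)=0$ could hold on $V$. Also, as you noticed, $1/f$ could be constant, and the paper's freeness does forbid constant $y$-coordinates. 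Your suggested remedies (a generic element of the ideal of $W$, Bertini) are not carried out and are aimed at the wrong difficulty.

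There are two clean ways to close the gap.

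\emph{The paper's fix.} Impose $x_{n+1}y_{n+1}f=1$ instead. Then $x_{n+1}$ is transcendental over $\C(V)$, and hence so is $y_{n+1}=1/(x_{n+1}f)$. So neither new coordinate is constant, and neither satisfies a $\PGL_2(\Q)$-relation or a modular relation with an old coordinate. Coupling the two new coordinates is exactly what makes freeness automatic.

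\emph{Keeping your $V'$.} By \Cref{thm: EC full}, $V\cap\gr(j)\neq\emptyset$. So $f$ has a zero $p\in V$ while $f\not\equiv 0$ on $V$, and therefore $1/f$ is nonconstant. Now suppose $\Phi_N(y_i,1/f)=0$ on $V\setminus\{f=0\}$. Since $\Phi_N$ is monic up to sign, of some degree $d$, in its second variable, multiplying by $f^d$ gives a polynomial identity $\pm 1+f\cdot(\cdots)=0$. This holds on $V\setminus\{f=0\}$, hence on all of $V$ by Zariski density, but it fails at $p$.

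With either fix, your argument goes through.
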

\begin{proof}
    Given a non-empty Zariski open subset $U \subsetneq V$, we must show that $U \cap \gr(j) \neq \emptyset$. It suffices to show this when $U$ is defined by a single inequation $P\neq 0$ inside $V$. Consider the variety $V' \seq V \times \C^2 \seq \C^{2(n+1)}$ given by $x_{n+1}y_{n+1}P=1$ (this is known as the Rabinowitsch trick). It is easy to verify that $V'$ is broad and free and its projection to $V$ is equal to $U$. \Cref{thm: EC full} asserts that $V'$ contains a point from the graph of $j$, whose projection then belongs to $U \cap \gr(j)$.
\end{proof}

\begin{remark}
    For a reducible Zariski closed set $V$, assuming it has a broad and free irreducible component ensures that $V \cap \gr(j) \neq \emptyset$. Freeness and broadness of $V$ itself do not imply that it has a free and broad component.
\end{remark}

An ingredient in the proof of \Cref{thm: EC full} is a simultaneous M\"obius approximation theorem for holomorphic functions (\Cref{thm: density of analytic orbits general version}) which may be of independent interest in homogeneous dynamics. For instance, given the functions $z^2$ and $z^3$, we show that there are sequences of matrices $\gamma_{1,s}, \gamma_{2,s}, \eta_s \in \PGL_2(\Z)$ such that the map $(\gamma_{1,s} (\eta_s z)^2, \gamma_{2,s} (\eta_s z)^3)$ converges locally uniformly to any prescribed pair of real M\"obius transformations. We use Ratner's orbit closure and equidistribution theorems in powers of $\PGL_2(\R)$ to establish this. This result is then used to construct a sequence of systems of equations from the given one using the action of $\PGL_2(\Z)$, which converges to a simpler system which has a solution by construction. The construction and properties of the approximating sequence are based on the Ax--Schanuel theorem. Rouch\'e's theorem is then applied to deduce that the original system must have solutions. We give a detailed overview of the proof in \S\ref{subsec: overview of proof}.

\subsection{Broadness and freeness}
The function $j : \h \to \C$ is invariant under the action of $\PGL_2(\Z)$ on $\h$ via linear fractional transformations, and has ``algebraic'' behaviour under the action of $\PGL_2(\Q)$. There is a countable collection of irreducible polynomials $\Phi_N(X,Y) \in \Z[X,Y], N \in \Z_{>0}$, known as \emph{modular polynomials}, such that for any $z_1, z_2 \in \h$ we have
\[ \exists g \in \PGL_2(\Q) (z_2= gz_1) \Leftrightarrow \exists N \in \Z_{>0} (\Phi_N(j(z_1), j(z_2)) = 0). \]

See \S\ref{sec: prelim} for more details.

\begin{definition}
Let $(\xbar, \ybar):= (x_1, \ldots, x_n, y_1, \ldots, y_n)$ denote the coordinates of $\C^{n}\times \C^n$. Let $V \seq \C^n \times \C^n$ be an algebraic variety (assumed to be irreducible in this paper). 
\begin{itemize}
    \item $V$ is said to be \emph{broad} if for any non-empty subset $I \seq \{ 1, \ldots, n \}$, the projection of $V$ to $(\xbar_I, \ybar_I)$ -- the subtuple of $(\xbar,\ybar)$ consisting of all coordinates with indices in $I$ -- has dimension at least $|I|$. In particular, $\dim V \geq n$.

    \item $V$ is said to be \emph{free} if no coordinate is constant on $V$, and for any $1\leq i< k \leq n$, no relation of the form $x_i = g x_k$ with $g \in \PGL_2(\Q)$  holds on $V$, and no relation of the form $\Phi_N(y_i, y_k)=0$, where $\Phi_N$ is a  modular polynomial, holds on $V$.
\end{itemize}
\end{definition}

Let us now justify why broadness and freeness are natural conditions in \Cref{thm: EC full}. A system of equations of the form 
\begin{equation}\label{eqn: j(z)=z=j(z^2)}
    j(z) = z,~ j(\sqrt[3]{2}z) = z^3
\end{equation}
is not expected to have a solution, for it is \textit{overdetermined} -- there are more independent equations than variables. To understand whether solutions exist, we observe that \eqref{eqn: j(z)=z=j(z^2)} implies $\td_{\Q}\Q(z, \sqrt[3]{2}z, j(z), j(\sqrt[3]{2}z)) \leq 1$. In this situation the Modular Schanuel Conjecture imposes strong restrictions on $z$.

\begin{conjecture}[Modular Schanuel Conjecture -- MSC]
    If $z_1, \ldots, z_n \in \h$ are non-quadratic numbers in distinct $\PGL_2(\Q)$-orbits, then \[ \td_{\Q} \Q(z_1, \ldots, z_n, j(z_1), \ldots, j(z_n)) \geq n.\]
\end{conjecture}

This follows from the Grothendieck--Andr\'e generalised period conjecture; see \cite[\S6.3]{Aslanyan-Eterovic-Kirby-closure-op} and the references therein.

In the above example, MSC implies that either $z$ or $\sqrt[3]{2}z$ is quadratic or $\sqrt[3]{2}z = \frac{az+b}{cz+d}$ for some rational numbers $a,b,c,d$ with $ad-bc \neq 0$. In all cases, $z \in \Q^{\alg}$, hence $j(z)=z \in \Q^{\alg}$ and $j(\sqrt[3]{2}z) = z^3 \in \Q^{\alg}$. By Schneider's theorem (which is the $n=1$ case of MSC), $z$ and $\sqrt[3]{2}z$ must both be quadratic irrationals, which is not possible.

While MSC is considered out of reach\footnote{Well... maybe not for much longer.}, the above example still shows that we should not expect overdetermined systems to have solutions. 

In the same spirit,  a system like
\[ j(z_1) = z_1,~ j(\sqrt[3]{2}z_1) = z_1^3,~ z_1+z_2+z_3 = j(z_3) \]
has three equations and three variables, so does not seem to be overdetermined. Nevertheless, the subsystem of the first two equations is overdetermined, so the whole system is not expected to have solutions and should be regarded as overdetermined.

Geometrically, this means that we do not expect $V$ to intersect the graph of $j$ unless $\dim V \geq n$ and a similar inequality holds for all projections of $V$ to any number of $\xbar$-coordinates and the corresponding $\ybar$-coordinates. This is exactly the condition of broadness.

Now we turn our attention to freeness. A system like \[ z_1 = i, (j(z_1) - 1728)z_2 = 1 \] has no solutions (even though it is broad) because $j(i) = 1728$. So we do not allow constant coordinates. Similarly, the system \[ z_2 = 2z_1, \Phi_2(j(z_1), j(z_2))=1 \] has no solution since $\Phi_2(j(z), j(2z)) \equiv 0$. Therefore, we do not allow $\PGL_2(\Q)$-relations on $\xbar$ and modular relations on $\ybar$. 

In other words, a broad but non-free variety need not remain broad after adding new equations obtained from the transformation rules of $j$ under the action of $\PGL_2(\Q)$. So, if a variety is not free, we would require that whenever a $\PGL_2(\Q)$-relation holds for some coordinates $x_k, x_l$, a corresponding modular relation also hold on $y_k, y_l$, and vice versa. Instead, we simply require that no $\PGL_2(\Q)$-relations and no modular relations hold on our variety, and no coordinate is constant, which is exactly freeness.

Thus, Modular Existential Closedness (MEC), that is, \Cref{thm: EC full}, can be seen as a dual statement to the Modular Schanuel Conjecture (MSC). In the terminology of arithmetic geometry, MSC is an unlikely intersections problem -- if two geometric objects (a variety and the graph of $j$) intersect when they are not supposed to, there must be a special reason for that. MEC is the matching likely intersections problem -- two geometric objects do intersect when they are supposed to.

We must note, however, that broadness and freeness are by no means necessary conditions for a system to have a solution. For instance, the system \[ j(z_1) = z_2-i+1728, j(z_2) = z_1-i+1728, j(z_1+z_2) = 287496 \]
is neither broad nor free, but it has a solution, namely, $z_1=z_2= i$. Of course, the coefficients in this system have been carefully chosen to allow a solution and, as explained above, for general systems of equations we do not expect solutions or, at least, we do not expect a Zariski dense set of solutions in the absence of broadness and freeness. A problem known as \textit{Strong Modular Existential Closedness} asks when varieties contain points from the graph of $j$ that are generic (in the Zariski topology) over a finitely generated field of definition. In this case it is possible to show, assuming MSC (possibly a uniform version), that some form of broadness is necessary for the existence of generic solutions or for the existence of a Zariski dense set of (not necessarily generic) solutions. See \cite{Eterovic-Generic,Aslanyan-ECandZPsurvey} for details.

\subsection{Overview of Existential Closedness}\label{subsec: Overview of EC}

In the early 2000s, in his work on the model theory of complex exponentiation \cite{Zilb-pseudoexp}, Zilber proposed a conjecture on the solvability of systems of exponential equations, which is known as the \emph{Exponential (Algebraic) Closedness conjecture}  or \emph{Existential Closedness conjecture} for $\exp$. A substantial body of work has been developed around Exponential Closedness since then \cite{marker-remark-pseudoexp,brown-masser,daquino--fornasiero-terzo,Gallinaro-exp-sums-trop,Gallinaro-raising-to-powers,Aslanyan-Kirby-Mantova,mantova-masser}, although the full conjecture  remains open. 

Since the late 2010s, analogous solvability problems and conjectures have been studied for other classical transcendental functions, notably the modular $j$-function. These are collectively referred to as \emph{Existential Closedness} problems and, in the case of the $j$-function, as \emph{Modular Existential Closedness}, or MEC for short. In this paper, MEC is the statement of \Cref{thm: EC full}. 

Existential Closedness can be seen as a complex analytic counterpart to Hilbert's Nullstellensatz (German for ``theorem of zeros'') which is about the solvability of systems of polynomial equations in the complex numbers (or any algebraically closed field). Exponential Closedness can thus be thought of as a Nullstellensatz for the exponential function (and is sometimes referred to as Zilber's Nullstellensatz), and similarly Modular Existential Closedness is a Nullstellensatz for the $j$-function.

In \cite{eterovic-herrero}, Eterovi\'c--Herrero proved that if a variety $V \seq \C^{2n}$ has dominant projection to the first $n$ coordinates then $V \cap \gr(j) \neq \emptyset$. A differential analogue of MEC was posed as a conjecture by Aslanyan in \cite{Aslanyan-adequate-predim} (where the notions of broadness and freeness were introduced) and proved by Aslanyan--Eterovi\'c--Kirby in \cite{Aslanyan-Eterovic-Kirby-Diff-EC-j}. A weak version of MEC for ``blurrings'' of the $j$-function was established by Aslanyan--Kirby in \cite{Aslanyan-Kirby-blurred-j}, where the full MEC statement was formally proposed as a conjecture (see the discussion below). Gallinaro obtained another special case of MEC for ``raising to powers'' in \cite{Gallinaro-raising-to-powers}, which is an important source of inspiration for the current paper and is discussed in more detail below. Some of these results were later extended to Shimura varieties by Eterovi\'c--Zhao \cite{eterovic-zhao}. A special case of a version of MEC involving the $j$-function and its derivatives was established by Aslanyan--Eterovi\'c--Mantova in \cite{aslanyan-eterovic-mantova-MECD1}.

\Cref{thm: EC full} settles the MEC conjecture in full with the caveat presented in \S\ref{subsec: extend to h counterex}. An overview of the proof is provided below.

Let us now explain the terminology used above. In model theory, a model $M$ of a first-order theory $T$ is said to be \textit{existentially closed} if for any quantifier-free formula $\varphi(\xbar)$ (roughly, a system of equations and inequations) with parameters from $M$, whenever $\varphi$ has a realisation (solution) in some larger model $M'$ of $T$, it already has a solution in $M$. In other words, a system of equations and inequations always has a solution in $M$, provided that it is consistent. In this case consistency means that a solution exists in a possibly larger model. In the context of Zilber's analytic and pseudo-analytic structures, such a structure (e.g. the complex field equipped with a transcendental analytic function) is not usually existentially closed in the first-order sense, but it is often existentially closed in a certain class of structures with so-called \textit{strong} extensions. This idea has been formalised in some cases, e.g. for exponential functions \cite{Zilb-pseudoexp,Bays-Kirby-exp}, but not for the $j$-function, not least because it is not defined on the whole complex plane, which makes it harder to treat $j$ model-theoretically (e.g. if we equip $\C$ with the graph of $j$ then the reals and integers would both be definable). However, the philosophy of existential closedness -- solutions exist unless there is a natural reason for them not to exist -- is still valid, hence these analytic solvability problems are referred to as Existential Closedness problems/conjectures.

The aforementioned obstacle to a model-theoretic treatment of $j$ disappears when we work with its differential equation in a differentially closed field. The first-order theory of such a structure is tame and includes the differential variants of MSC and MEC as axiom schemes, and usual existential closedness in strong extensions also holds. See \cite{Aslanyan-adequate-predim} for details.

\subsection{Extending $j$ to $\h$}\label{subsec: extend to h counterex}

As pointed out above, Modular Existential Closedness was first proposed in \cite{Aslanyan-Kirby-blurred-j} as a conjecture.

\begin{conjecture}[{\cite[Conj. 1.2]{Aslanyan-Kirby-blurred-j}}]
     Let $V \seq \C^n \times \C^n$ be a broad and free variety such that $V \cap (\uh^n \times \C^n) \neq \emptyset$. Then there are $z_1, \ldots, z_n \in \uh$ such that $(z_1, \ldots, z_n, j(z_1), \ldots, j(z_n)) \in V$.
\end{conjecture}

However, this conjecture is false.\footnote{This was spotted by ChatGPT 5.6 Sol.} Indeed, consider the system of equations
\[  z_1 = z_3+ iR, z_2 = z_4 + iR, j(z_1) j(z_2) = 1  \]
where $R>0$ is a real number. This is broad and free. Since $z_3$ and $z_4$ must be in $\uh$, the imaginary parts of $z_1$ and $z_2$ are at least $R$. Since $|j(z)| \to \infty$ as $\im z \to + \infty$, if $R$ is large enough, $|j(z_1)|> 1$ and $|j(z_2)|>1$, so we cannot have $j(z_1) j(z_2) = 1$.

In \cite[Conj. 3.5]{Eterovic-Generic}, it is required that $V \cap (\uh^n \times \C^n)$ be Zariski dense in $V$. However, when  $V \cap (\uh^n \times \C^n)$ is non-empty, it is relatively Euclidean open and hence Zariski dense in $V$, so the requirement of Zariski density does not make any difference. Therefore, extending $j$ to the lower half-plane is essential.

\subsection{Overview of the proof}\label{subsec: overview of proof} Before providing an overview of the proof of \Cref{thm: EC full}, we first discuss the approach used in \cite{Gallinaro-thesis,Gallinaro-raising-to-powers,eterovic-zhao} which is generalised in the current paper. In \cite[Ch. 5]{Gallinaro-thesis}, Gallinaro establishes MEC assuming $V$ is of the form $L \times W$ where $L$ is a M\"obius subvariety, i.e. defined by equations of the form $x_k = g x_i$ with $g \in G:=\PSL_2(\R)$. Note that in this case we do not have to extend $j$ to the lower half-plane. We illustrate Gallinaro's proof with a simple example.

\begin{example}
    
Consider the equation $j(gz) = j(z)^2+1$ where $g \in G:= \PSL_2(\R)$. Pick any $u \in \C$ and let $\tau\in \uh$ be such that $j(\tau) = u$. There is $h \in G$ such that $j(h \tau) = u^2+1$. 

Freeness means that $g \notin \PGL_2^+(\Q)$. Ratner's orbit closure theorem implies that $\Gamma g \Gamma$ is dense in $G$ where $\Gamma:=\PSL_2(\Z)$. So let $\gamma_s, \zeta_s \in \Gamma$ be such that $\gamma_s g \zeta_s \to h$. Then the sequence of functions $j(\gamma_s g \zeta_s z) - j(\zeta_s z)^2 -1$ converges to $j(hz)- j(z)^2-1$ uniformly in some disc $D$ around $\tau$. Since $\tau$ is a zero of $j(hz)-j(z)^2-1$, by Rouch\'e's theorem, there is $\tau_s \in D$ such that $j(\gamma_s g \zeta_s \tau_s) - j(\zeta_s \tau_s)^2 -1 = 0$. So $\zeta_s \tau_s$ is a zero of $j(gz) - j(z)^2-1$. Note that Gallinaro uses Kantorovich's theorem instead of Rouch\'e's theorem in the last step of the proof, but Eterovi\'c--Zhao \cite{eterovic-zhao} replace that step with what we just described.
\end{example}

Now, consider an equation of the form $P(z, j(z), j(f(z))) = 0$ where $P$ is a polynomial and $f$ is a branch of an algebraic function defined around a real point $r$ (for this exposition, one may take $f$ to be a polynomial). If $f(r) \in \h$, then we pick a sequence $\zeta_s \in \hat{\Gamma} := \PGL_2(\Z)$ with $\zeta_s z \to r$ locally uniformly for $z \in \h$. Substituting $\zeta_s z$ in the original equation we get a limiting equation $P(r, j(z), j(f(r)))=0$ which is easy to solve. Then we apply Rouch\'e's theorem and proceed as above.

However, if $f$ maps a real neighbourhood of $r$ to $\R$ then this approach does not work, as $j(f(r))$ is undefined. If, however, we can show that there are sequences $\gamma_s, \zeta_s \in \hat{\Gamma}$ such that $\zeta_s z \to r$ locally uniformly for $z \in \h$ and the composition $\gamma_s \circ f \circ \zeta_s$ converges locally uniformly to a suitable M\"obius transformation $h$ chosen as above, then the argument goes through. More precisely, we choose $u, v \in \C$ with $P(r, u, v) = 0$ and $\tau \in \h, h \in \hat{G}:=\PGL_2(\R)$ with $j(\tau) = u, j(h \tau) = v$. Then $\tau$ is a zero of $P(r, j(z), j(hz))$, and this function is approximated by $P(\zeta_s z, j(\zeta_s z), j(\gamma_s f (\zeta_s z)))$. So we proceed as above. We provide more details on the proof in this special case in \S\ref{sec: special cases and examples}. Note that this special case of MEC is already new.

The proof of \Cref{thm: EC full} is an extension of this argument implemented along the following lines.

\begin{enumerate}
    \item We observe that in \Cref{thm: EC full} we may assume $\dim V = n$. 

    \item Using broadness, we choose a suitable basis of coordinate functions of $V$ of the form $(x_1, \ldots, x_m, y_{m+1},\ldots, y_n)$. Recall that $(x_1, \ldots, x_n, y_1, \ldots, y_n)$ denote the coordinates on $\C^n \times \C^n$. We then write each coordinate $x_k (k>m)$ as an algebraic function $P_k(x_1, \ldots, x_m, y_{m+1}, \ldots, y_{k-1})$ and $y_l (1 \leq l \leq m)$ as an algebraic function $Q_l(x_1, \ldots, x_m, y_{m+1}, \ldots, y_n)$. Note that the coordinate basis is chosen so that $P_k$ does not depend on $y_k, \ldots, y_n$.

    \item Substituting $x_i = z_i, y_i = j(z_i)$, we get a system of $n$ equations in $n$ variables to be solved. 

    \item The equations allow us to express each $z_k (k>m)$ as an analytic function $Z_k(\zbar)$ where $\zbar:=(z_1, \ldots, z_m)$. These functions are defined recursively, and we need to ensure that at each step the functions substituted into the algebraic function $P_k$ take values in the domain of definition of $P_k$. This amounts to showing that the functions we construct are sufficiently independent from each other, which follows from the Ax--Schanuel theorem.

    \item If we can find a real point $\rbar \in \R^m$ such that the $Z_k$'s are well defined around $\rbar$ and take non-real values there, then $j(Z_k)$ is well defined, and so we can approach $\rbar$ by the action of a sequence of matrices from $\hat{\Gamma}^m$ from any compact set in $\h^m$ and proceed as in the example considered above. 

    \item However, such an $\rbar$ does not exist when some of the $Z_k$'s are real-valued on the real part of the domain. Since $j(Z_k)$ is not valid for these $Z_k$'s (at real points), we actually construct a related but different collection of functions which encode the non-real $Z_k$'s and the real $Z_k$'s in different ways. Ax--Schanuel is used to avoid potential degeneracies in this construction.

    \item\label{step: approximation} We separate all real-valued $Z_k$'s and show that for suitable sequences $\gamma_{k,s}, \zeta_{l,s} \in \hat{\Gamma}$ the composition $\gamma_{k,s} Z_k(\zeta_{1,s}z_1, \ldots, \zeta_{m,s} z_m)$ tends to a prescribed M\"obius transformation with respect to one of the variables. The method of achieving this is discussed separately below.

    \item In the final step, we use the chosen sequences to replace our system of equations by a limiting system which is constructed so that it has a solution as in the above examples. We use the Ax--Schanuel theorem to show that this solution is isolated. We then apply Rouch\'e's theorem to conclude that the original system of equations has a solution.
\end{enumerate}

The approximation in Step \ref{step: approximation} is possible due to the following theorem, which may be of independent interest in its own right.

\begin{restatable}{theorem}{myTheorem}
\label{thm: density of analytic orbits general version}
Let $F_1, \ldots, F_N$ be non-constant holomorphic functions of variables $z_1, \ldots, z_m$ defined on some open domain $\Omega \seq \C^m$ with $\Omega \cap \R^m \neq \emptyset$. Let $B \seq \Omega \cap \R^m$ be a non-empty open box. Assume the functions $z_1, \ldots, z_m, F_1, \ldots, F_N$ are in distinct $\PGL_2(\Q)$-orbits, and $F_k(B) \seq \R$ for all $k$. Then there is a map $\sigma: \{ 1, \ldots, N \} \to \{ 1, \ldots, m \} $ with the following property: for any $\rbar:=(r_1, \ldots, r_m) \in B$ and for any $h_k \in \PGL_2(\R)~ (1 \leq k \leq N)$ there are sequences of matrices $(\gamma_{k,s})_s, (\zeta_{l,s})_s \seq \PGL_2(\Z), 1\leq k \leq N, 1\leq l \leq m,$ such that  
    \begin{itemize}
        \item for all $l$, as $s \to \infty$, we have $\zeta_{l,s} z \to r_l$ locally uniformly for $z \in \h$;

        \item for all $k$, as $s \to \infty$, we have $\gamma_{k,s} F_k(\zeta_{1,s} z_1, \ldots, \zeta_{m,s} z_m ) \to h_k z_{\sigma(k)}$  locally uniformly on $\h^m$.
    \end{itemize}
\end{restatable}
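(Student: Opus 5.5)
We would first reduce to a linearised problem near $\rbar$ and then use Ratner's orbit closure theorem in a power of $\hat G:=\PGL_2(\R)$.

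\textbf{Blow-up.} Take $\zeta_{l,s}$ to be a ``zoom'' at $r_l$, so that $\zeta_{l,s} z = r_l + \epsilon_s\, g_{l,s} z$ up to lower order, with $\epsilon_s\to0$ and $g_{l,s}$ in $\hat G$. Then, by Taylor expansion,
\[ F_k(\zeta_{1,s}z_1,\ldots,\zeta_{m,s}z_m)= F_k(\rbar)+\epsilon_s\sum_l \partial_l F_k(\rbar)\,(g_{l,s}z_l)+O(\epsilon_s^2). \]
Because of this mixing of variables, the limit cannot be an arbitrary Möbius function of all variables. Instead, we arrange the scales of the different variables, say $\epsilon_{l,s}=\epsilon_s^{\lambda_l}$ with $\lambda_1\ll\lambda_2\ll\cdots$. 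Then in each $F_k$ the variable $z_{\sigma(k)}$ dominates, where $\sigma(k)$ is the variable with the largest scale among those on which $F_k$ depends nontrivially near $B$. After this rescaling, $F_k\circ\zeta_s$ is, to leading order, an affine real function of $z_{\sigma(k)}$ alone, and $\gamma_{k,s}$ must absorb the translation and dilation.

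The real difficulty is that $\zeta_{l,s}$ and $\gamma_{k,s}$ must lie in the lattice $\hat\Gamma$, not in $\hat G$. We rephrase the problem as follows. For fixed $\rbar$, consider the map
\[ \Psi:\hat G^m\to \hat G^N, \qquad (g_l)\mapsto \bigl(\text{Möbius germ of } F_k \text{ along } g_{\sigma(k)}\bigr). \]
The set of achievable limits then contains the closure of $\hat\Gamma^N\cdot\Psi(\hat\Gamma^m\cdot u_t)$ along a one-parameter \emph{unipotent} family $u_t$ that implements the zoom. Indeed, zooming into a real point $r$ corresponds to pushing the horocycle flow based at $r$. We would then invoke Ratner's orbit closure and equidistribution theorems for unipotent flows on $\hat\Gamma^{m+N}\backslash\hat G^{m+N}$. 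The orbit closure is $\hat\Gamma^{m+N}\backslash H$ for a closed subgroup $H$, and the point is to show that $H$ is everything that is needed. The hypothesis that $z_1,\ldots,z_m,F_1,\ldots,F_N$ lie in distinct $\PGL_2(\Q)$-orbits should exclude proper intermediate subgroups. By Goursat-type classification, such subgroups of $\hat G^{m+N}$ that contain a diagonal unipotent are products of diagonals twisted by commensurator elements, i.e.\ by $\PGL_2(\Q)$. A twisted diagonal linking two factors would force a relation $F_k = g F_{k'}$ or $F_k=g z_l$ with $g\in\PGL_2(\Q)$, which holds identically by analytic continuation. Hence $H$ contains the full product of the relevant factors, and equidistribution gives density for almost every time. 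Since this holds for every $\rbar\in B$, we can pass to diagonal subsequences to obtain sequences converging to prescribed $(h_k)$.

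\textbf{Main obstacle.} The hardest step is that $F_k$ is nonlinear, so the orbit is not literally a unipotent orbit. It is a curved, analytic image of one, and Ratner's theorem applies only to genuine unipotent orbits. We would handle this in two stages. First, use equidistribution of long horocycle segments (via Shah-type results on analytic curves, or by direct linearisation on shrinking windows where $F_k$ is $O(\epsilon^2)$-close to its affine approximation) to show that the pushed curve is asymptotically a product of horocycle pieces in the factors indexed by $\sigma$. Second, control the error uniformly on compacta in $\h^m$ so that the locally uniform convergence claimed in the statement follows. The choice of the hierarchical scales $\lambda_l$, and the verification that the distinct-orbit hypothesis survives linearisation (rather than only holding for the original $F_k$), are the delicate points. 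We expect to need the derivatives $\partial_{\sigma(k)}F_k$ to be nonvanishing on a dense open subset of $B$, and to shrink $B$ accordingly.
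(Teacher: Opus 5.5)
Your choice of $\sigma$ (the slowest-converging variable, i.e.\ the least $i$ with $\partial F_k/\partial z_i\not\equiv0$) matches the paper's. However, the analytic core of the plan has a genuine gap: the linearisation.

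Ratner's theorems apply to the horocycle zoom $\eta_s=\xi_{r,t_s}\in H_r$ based at a \emph{fixed} point $r$, and $\xi_{r,t}z=r+\frac{z-r}{1-t(z-r)}$. So for $z$ in a compact $K\seq\h$, the point $\eta_s z$ lies at distance $\asymp t_s^{-1}$ from $r$, while $\eta_s(K)$ has diameter $\asymp t_s^{-2}$. Hence any $\gamma_s$ with $\gamma_s g\eta_s\to h$ has $|\gamma_s'|\asymp t_s^{2}$ on $g(\eta_s K)$; quantitatively, $\|\gamma_s\|=O(t_s)$ and $|\gamma_s^{(i)}|\le i!\,A^i t_s^{2i}$. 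The quadratic Taylor term of $F$ at $r$ has size $\asymp t_s^{-2}$, so after applying $\gamma_s$ it contributes $O(1)$, not $o(1)$. Concretely, if $g_{\mathrm{aff}}$ is the affine approximation and $\gamma_s g_{\mathrm{aff}}\eta_s\to h$, one actually gets $\gamma_s F(\eta_s z)\to h\,\xi_{r,c}z$ with $c=F''(r)/(2F'(r))$.

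The other readings of your zoom do not rescue the argument. A genuine blow-up $r+\epsilon_s g(z)$, as in your first paragraph, makes the quadratic term negligible, but the zoom is then a diagonal (geodesic) flow, outside the scope of Ratner's theorems. Expanding instead about the moving centre $r-1/t_s$ of $\eta_s(K)$ makes the approximating map depend on $s$, which Ratner does not control.

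The missing idea is to approximate each $F_k$ at $r$ to \emph{second} order by a M\"obius map,
$g_k(z)=F_k(r)+\frac{F_k'(r)(z-r)}{1-\frac{F_k''(r)}{2F_k'(r)}(z-r)}$.
This is possible since $F_k'(r)\neq0$ and $\hat G$ is three-dimensional; note that $g_k=g_{k,\mathrm{aff}}\circ\xi_{r,c_k}$ with $c_k=F_k''(r)/(2F_k'(r))$. One then applies orbit closure and equidistribution to the honest unipotent orbit $\Gamma^{N+1}(I,g_1,\dots,g_N)\Delta_{N+1}(H_r)$. Combining $|F_k(\eta_s z)-g_k(\eta_s z)|=O(t_s^{-3})$ with the derivative bounds above, the Taylor series of $\gamma_{k,s}$ sums to an error $O(t_s^{-1})$. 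The factor $I$ supplies $\nu_s\in\Gamma$ with $\nu_s\eta_s\to I$, and $\zeta_s:=\nu_s^{-1}$ are the required lattice elements. No Shah-type equidistribution of analytic curves is needed.

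Two further steps need repair. First, the distinct-orbit hypothesis does not enter through twisted diagonals forcing identities $F_k=gF_l$ ``by analytic continuation''. In the Goursat step, a proper orbit closure forces the \emph{values} $F_k(r)$ and $F_l(r)$ (or $F_k(r)$ and $r$, via the factor $I$) into a common $\hat C$-orbit. Points of $\p^1(\Q)$ must also be avoided, because there the horocycle orbit is closed. Such coincidences do occur at particular points of $B$, so the conclusion cannot be obtained directly for every $\rbar\in B$, nor by shrinking $B$. The paper proves it on a comeagre set of good $\rbar$: irrational coordinates, $\partial_{\sigma(k)}F_k(\rbar)\neq0$, $F_k(\rbar)\notin\Q\cup\hat C r_{\sigma(k)}$, and pairwise distinct $\hat C$-orbits among the relevant values. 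The functional hypothesis is used only to make each bad set nowhere dense. Every $\rbar\in B$ is then reached by a diagonal argument over a compact exhaustion of $\h^m$.

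Second, a single one-parameter unipotent flow on $\hat\Gamma^{m+N}\backslash\hat G^{m+N}$ cannot realise your hierarchical scales $\epsilon_s^{\lambda_l}$, since different variables need times growing at polynomially different rates. The paper decouples the variables by induction on $m$. It freezes $z_m=r_m$ in the functions with $\sigma(k)<m$, and treats those with $\sigma(k)=m$ (which depend on $z_m$ alone) by a separate horocycle flow. It then passes to a subsequence of $\eta_{m,s}$ so that $|F_k(\dots,\eta_{m,s}z_m)-F_k(\dots,r_m)|\le t_{\sigma(k),s}^{-3}$, after which the same $t^{-3}$-versus-$t^{2}$ bookkeeping shows the frozen variable is harmless. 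In particular, only functions sharing a $\sigma$-value ever need joint equidistribution.
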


\begin{remark}
    Since $\zeta_{l,s}z \to r_l$ locally uniformly in $\h$, for any compact set $K \seq \h^m$, for $\zbar \in K$ the tuple $(\zeta_{1,s} z_1, \ldots, \zeta_{m,s} z_m )$ will eventually land in a neighbourhood of $\rbar$ contained in $\Omega$. So, the function $F_k(\zeta_{1,s} z_1, \ldots, \zeta_{m,s} z_m )$ is well defined on $K$ for all sufficiently large $s$.
\end{remark}

We now provide an overview of the proof of this theorem.
\begin{enumerate}
    \item For $N=m=1$, we have a function $f(z)$ defined around a real point $r$ and mapping a real neighbourhood to $\R$. We want to show that for any $h \in \hat{G}=\PGL_2(\R)$ we can find sequences $\gamma_s, \zeta_s \in \hat{\Gamma} = \PGL_2(\Z)$ such that $\zeta_s z \to r$ locally uniformly for $z \in \h$ and the composition $\gamma_s \circ f \circ \zeta_s$ converges locally uniformly to $h$.

    \item To that end, we approximate $f$ by a M\"obius transformation up to second order, i.e. we find $g \in \hat{G}$ such that $f^{(i)}(r) = g^{(i)}(r), i=0,1,2$. This is possible if $f'(r) \neq 0$, and if $f'(r)=0$ we choose another point where the derivative does not vanish, and approximate $r$ by such points. 

    \item We map $r$ to $\infty$ by a suitable M\"obius transformation. Then we apply Ratner's orbit closure and equidistribution theorems to the horocycle flow given by $\begin{pmatrix}
        1 & t \\ 0 & 1
    \end{pmatrix}$ for a real parameter $t$. This allows us to establish the desired approximation for $g$ instead of $f$.

    \item We then use the Taylor expansions of $f$ and $g$ to show that in the above approximation result $g$ can be replaced by $f$.

    \item When we have several functions of a single variable (i.e. $m=1$ and arbitrary $N$), we use the same approach but work in powers of $\hat{G}$.

    \item When we have a single function of several variables, say $f(z_1, z_2)$ defined around $(r_1, r_2) \in \R^2$, we apply the one-variable result to $f(z_1, r_2)$ and find $\gamma_s, \zeta_{1,s} \in \hat{\Gamma}$ such that $\gamma_s f(\zeta_{1,s} z_1, r_2) \to h z_1$. Then we choose another sequence $\zeta_{2,s}$ such that $\zeta_{2,s} z - r_2$ approaches $0$ much faster than $\zeta_{1,s} z - r_1$, for $z$ in any compact subset of $\h$. This allows us to conclude that $\gamma_s f(\zeta_{1,s} z_1, \zeta_{2,s} z_2) \to h z_1$.

    \item In the case of several functions of several variables, we first choose a suitable variable for each function given by the map $\sigma$ in \Cref{thm: density of analytic orbits general version}, then we apply the above argument and proceed by induction on the number of variables.
\end{enumerate}

The two major ingredients in the proof of \Cref{thm: EC full} that we use as black boxes are the Ax--Schanuel theorem \cite{Pila-Tsim-Ax-j} and Ratner's orbit closure/equidistribution theorems \cite{Ratner1991OrbitClosure}.

\subsection{Structure of the paper}

\begin{itemize}
    \item Section~\ref{sec: prelim} contains some preliminaries that we need throughout the paper. 

    \item In Section~\ref{sec: special cases and examples} we demonstrate our strategy on several examples to give the reader some intuition for the main ideas of the proof. In particular, we provide a detailed proof sketch for MEC for $n=2$ (modulo \Cref{thm: density of analytic orbits general version}), which is already a new result. We also look at a simple example where we closely follow our general strategy for the proof of \Cref{thm: EC full} and demonstrate the key ideas of that proof in an explicit context.

    \item To keep the natural flow of the paper, we then immediately proceed to the proof of \Cref{thm: EC full} in Section \ref{sec: proof of main thm}, assuming \Cref{thm: density of analytic orbits general version}.

    \item \Cref{thm: density of analytic orbits general version} is quite different in nature from the rest of the paper, and is proven in Section~\ref{sec: dynamics}. The reader interested specifically in this theorem may proceed directly to  \S\ref{sec: dynamics}.

    \item In Appendix \ref{appendix} we provide domain colouring plots for two complex functions illustrating the distribution of their zeros.
\end{itemize}

\subsection*{AI statement}
Over the last few months, I used Gemini and ChatGPT (5.6 Sol and 6 Astra)  (alongside \cite[\S5.4]{Gallinaro-thesis} and \cite{Morris--Ratner}) to learn about and develop some intuition for homogeneous dynamics, particularly Ratner's theorems. The proofs of Lemmas \ref{lem: connected subgps of SL_2(R)} and \ref{lem: broad pregeom}, the example in \S\ref{subsec: extend to h counterex}, and some technical steps in the proof of \Cref{prop: Gamma g_0...g_N closure} are due to ChatGPT. Note that Lemmas \ref{lem: connected subgps of SL_2(R)} and \ref{lem: broad pregeom} are known results (at least to the experts in the area), but I decided to include proofs for completeness and because of the lack of precise references. ChatGPT was used to find several references. \Cref{fig:fundamental-domain,fig:j,fig:key,fig:j(z)-j(z^2)-1,fig:z^3+j(z)+...} were generated by ChatGPT 6 Astra. All of the above-mentioned uses of AI are identified throughout the manuscript. 

The paper was entirely written by me (including the aforementioned proofs suggested by AI). I also used ChatGPT 6 Astra to review the final draft, and implemented the corrections (all of which were minor) myself.

I am a human...

\subsection*{Acknowledgements} I am grateful to Sebastian Eterovi\'c, Francesco Gallinaro, Jonathan Kirby, Vincenzo Mantova, and Ronnie Nagloo for various discussions around Modular Existential Closedness over the last few years. I also thank Lasse Rempe and Richard Webb for discussions on homogeneous dynamics.

\section{Preliminaries}\label{sec: prelim}

\subsection{Notation and conventions}

\begin{itemize}
    \item Algebraic varieties are identified with their $\C$-points and are assumed to be irreducible.

    \item We use boldface letters (e.g. $\xbar, \ybar, \zbar, \wbar, \Fbar$, etc.) to denote tuples/vectors.

    \item When working with algebraic functions or equations, we usually use $\xbar$ and $\ybar$ to denote coordinates and variables. When working with complex analytic functions, we use $\zbar$ and $\wbar$ instead.

    \item For $\xbar = (x_1, \ldots, x_n)$ and a non-empty set $I  \seq \{ 1, \ldots, n \}$, we write $\xbar_I$ for the tuple $(x_i)_{i \in I}$, i.e. the projection of $\xbar$ to $I$.

    \item We identify $\PGL_2^+(\Q)$ (respectively $\PGL_2(\Q)$) with its image in $\PSL_2(\R)$ (respectively $\PGL_2(\R)$) under the embedding $g \mapsto \frac{1}{\sqrt{|\det(g)|}} g$.

    \item A subset of a field of characteristic 0 (which will be either $\C$ or a field of functions) is said to be $\PGL_2(\Q)$-independent (respectively, modularly independent) if for any two distinct elements $a, b$ from that subset no relation of the form $b=ga$ with $g \in \PGL_2(\Q)$ (respectively, $\Phi_N(a,b)=0$ with $N\in \Z_{>0}$) holds.
\end{itemize}

\subsection{Linear fractional transformations}
Let $\uh$ denote the complex upper half-plane $\{z\in \mathbb{C}:\mathrm{Im}(z)>0\}$. The group $\mathrm{GL}_2^+(\R)$ of $2\times 2$ real matrices with positive determinant acts on $\uh$ via linear fractional transformations:
\[gz:=\frac{az+b}{cz+d} \quad \text{for} \quad g=\begin{pmatrix}a & b\\ c & d\end{pmatrix}\in\mathrm{GL}_2^+(\R).\]
However, this action is not faithful, so instead we work with projective linear groups (obtained by quotienting a linear group by its centre). Thus $G:= \PSL_2(\R) := \GL_2^+(\R) / \R^{\times}I = \SL_2(\R)/\{ \pm I \}$ acts faithfully on $\uh$ so each element of this group corresponds uniquely to a M\"obius transformation. 

Note that here we identify a matrix class in $G$ containing some matrix $g\in \GL_2^+(\R)$ with the normalised representative $\frac{1}{\sqrt{|\det(g)|}} g$ which has determinant $1$ and, up to $\pm I$, does not depend on the choice of $g$.

The \textit{modular group} $\Gamma :=\PSL_2(\Z) := \SL_2(\Z) / \{ \pm I \}$ is a discrete subgroup of $G$. Again, we think of the elements of this group as matrices with determinant $1$.  

We also need to consider the action of $\PGL_2^+(\Q)$ on $\uh$. However, given a matrix $g \in \GL_2^+(\Q)$, the normalised matrix $\frac{1}{\sqrt{|\det(g)|}} g$ may not be rational any more. Nevertheless, the map $g \mapsto \frac{1}{\sqrt{|\det(g)|}} g$ gives an embedding of $\PGL_2^+(\Q)$ into $G$. We write $C$ for the image of this embedding and identify $\PGL_2^+(\Q)$ with $C$. Note that $C$ is the \textit{commensurator} of $\Gamma$, i.e. the subgroup of all $g \in G$ such that the intersection of $g^{-1}\Gamma g$ and $\Gamma$ has finite index in both groups (see \cite[Lemma 4.5.1]{Miyake2006}).

\iftrue
\begin{figure}[htbp]
  \centering
  \resizebox{\linewidth}{!}{%
    \input{gl2z-domains-tikz.tex}%
  }
   \caption[Fundamental domains]{The action of $\PGL_2(\Z)$ on $\h$. Every shaded hyperbolic triangle, including $\mathcal{F}_+$ and $\mathcal{F}_-$, is a fundamental domain.\protect\footnotemark}\label{fig:fundamental-domain}
\end{figure}
\fi

\footnotetext{This was generated by ChatGPT 6 Astra motivated by \cite[Fig. 1]{aslanyan-eterovic-mantova-MECD1}.}

Now, let $\lh:= \{ z \in \C: \im (z) < 0 \}$, and define $\h := \uh \cup \lh = \C \setminus \R$.

The group $\hat{G}:= \PGL_2(\R)$ which consists of matrices with determinant $\pm 1$ acts on $\h$. Matrices with positive determinant preserve $\uh$ and $\lh$, while those with negative determinant swap them. As before, we identify a matrix class in $\hat{G}$ containing some matrix $g\in \GL_2(\R)$ with the representative $\frac{1}{\sqrt{|\det(g)|}} g$ which has determinant $\pm 1$ and is unique up to sign.

Similarly, the group $\hat{\Gamma}:=\PGL_2(\Z)$ acts on $\h$. If $g \in \GL_2(\Z)$, then $\frac{1}{\sqrt{|\det(g)|}} g$ still has integral entries (in fact, it is equal to $g$), for $\det g = \pm 1$. 

The group $\PGL_2(\Q)$ also acts on $\h$. The map $g \mapsto \frac{1}{\sqrt{|\det(g)|}} g$ gives an embedding of $\PGL_2(\Q)$ into $\hat{G}$. We write $\hat{C}$ for the image of this embedding, and identify $\PGL_2(\Q)$ with $\hat{C}$. In this case, $\hat{C}$ is the commensurator of $\hat{\Gamma}$.

The notation $G, \Gamma, C,$ etc. will mostly be used in \S\ref{sec: dynamics}. In the rest of the paper we will use explicit notation such as $\PSL_2(\R), \PSL_2(\Z)$, etc.

We refer the reader to \cite[Ch. 1]{Miyake2006} for an in-depth discussion of Fuchsian groups and their action on the upper half-plane.

\subsection{The $j$-function}

The \emph{modular $j$-function} is defined as the unique function $j: \uh \to \C$ with the following properties:
\begin{itemize}
    \item $j$ is holomorphic on $\uh$;
    \item $j$ is $\PSL_2(\Z)$-invariant, i.e. for all $\gamma \in \PSL_2(\Z)$ we have $j(\gamma z) \equiv j(z)$;
    \item $j(i) = 1728, j\left(e^{\frac{2 \pi i }{3}}\right) = 0$;
    \item $j$ has a simple pole as $\im z \to + \infty$, i.e. the $q$-expansion below has a simple pole at $q=0$.
\end{itemize}

In particular, $j$ is 1-periodic. Its Fourier expansion (also known as $q$-expansion) is of the form
\begin{equation}\label{eq:j-fourier-expansion}
j(z)=q^{-1}+744+\sum_{k=1}^{\infty}a_kq^k,\, \text{ with }q:=\exp(2\pi i z)  \text{ and }a_k\in \Z,
\end{equation}
which has a simple pole at $q=0$ as mentioned above.

We refer the reader to \cite[Ch. 3, \S3]{Lang1987EllipticFunctions} for more details on the $j$-function.

As explained in the introduction, we need to extend $j$ to $\C \setminus \R$. We do so by defining $j(z) := \overline{j(\bar{z})}$ for $z$ in the lower half-plane $\lh$. Using \eqref{eq:j-fourier-expansion} we see that for any $z \in \h$ we have $j(z) = j(-z)$. This means that $j$, as a function on $\h$, is invariant under the action of $\PGL_2(\Z)$.

 \begin{figure}[htbp]
    \centering 
    \includegraphics[width=1\linewidth]{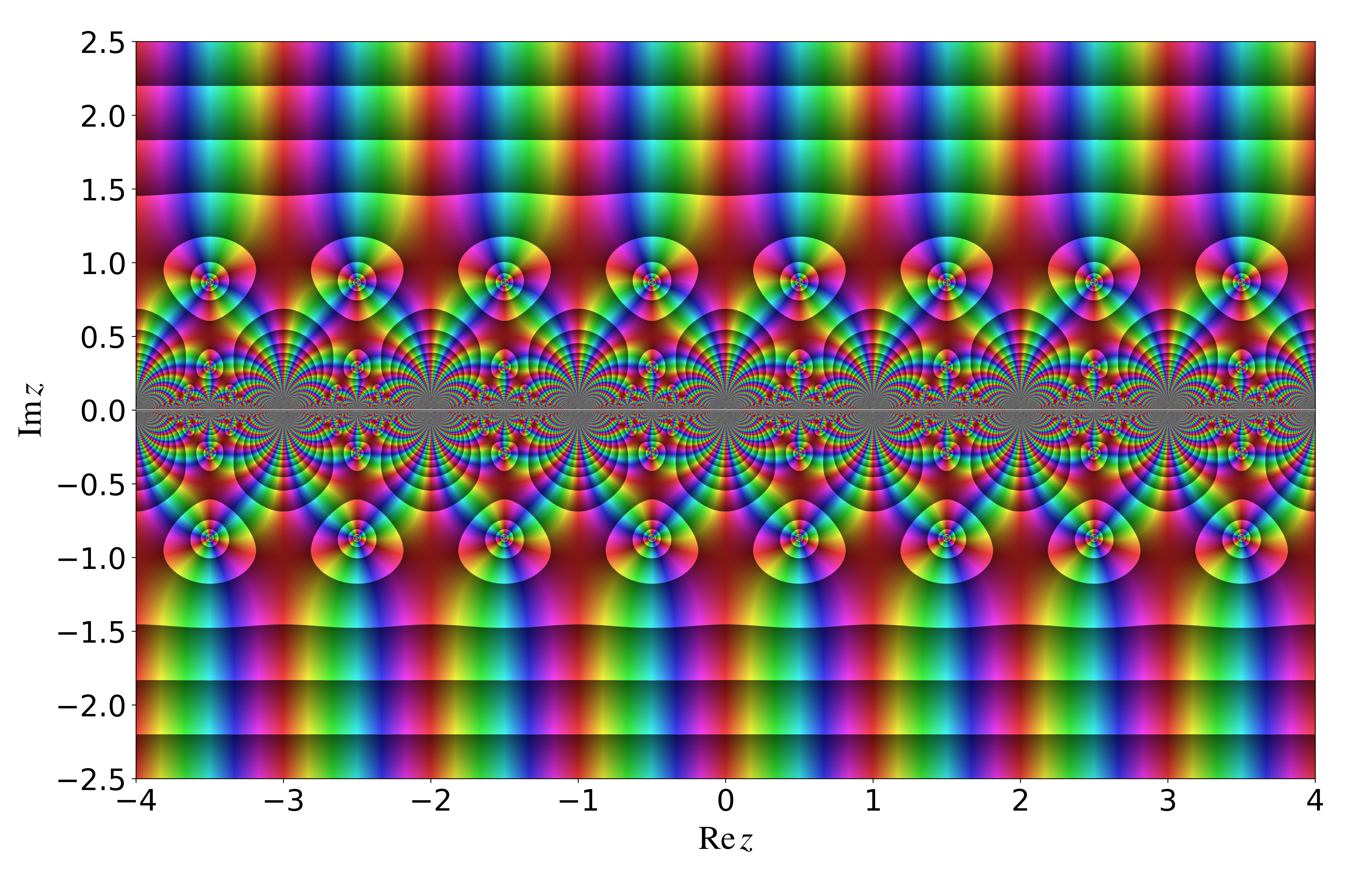} 
    \caption{Visualisation of $j$ via domain colouring (see Appendix \ref{appendix})\protect\footnotemark}
    \label{fig:j}
\end{figure}

\footnotetext{Generated by ChatGPT 6 Astra.}

\subsection{Modular polynomials}

It is well known that there is a countable collection of irreducible polynomials $\Phi_N(X,Y) \in \Z[X,Y], N \in \Z_{>0}$, known as \emph{modular polynomials}, such that for any $z_1, z_2 \in \uh$ we have
\[ \exists g \in \PGL_2^+(\Q) (z_2= gz_1) \Leftrightarrow \exists N \in \Z_{>0} (\Phi_N(j(z_1), j(z_2)) = 0). \]
This then implies that for any $z_1, z_2 \in \h$ we have
\[ \exists g \in \PGL_2(\Q) (z_2= gz_1) \Leftrightarrow \exists N \in \Z_{>0} (\Phi_N(j(z_1), j(z_2)) = 0). \]

We refer the reader to \cite[Ch. 5, \S2]{Lang1987EllipticFunctions} for further details.

\subsection{Ax--Schanuel}

The Ax--Schanuel theorem for the $j$-function, proved by Pila and Tsimerman \cite{Pila-Tsim-Ax-j}, is used in the proof of \Cref{thm: EC full}.

\begin{fact}[Ax--Schanuel, functional version]\label{Ax-Schanuel functional}
    Let $\Omega \seq \C^m$ be some domain and let $f_1(\zbar), \ldots, f_n(\zbar) : \Omega \to \h$ be non-constant holomorphic functions such that no relation of the form $f_i = g f_k$ with $g \in \PGL_2(\Q)$ and $1\leq i < k \leq n$ holds. Then \[ \td_{\C} \C(f_1, \ldots, f_n, j(f_1), \ldots, j(f_n)) \geq n + \rk\left( \frac{\partial f_k}{\partial z_l} \right)_{k,l}. \]
\end{fact}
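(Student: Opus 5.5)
The plan is to deduce this from the Pila--Tsimerman Ax--Schanuel theorem \cite{Pila-Tsim-Ax-j}, which is stated for functions with values in the upper half-plane $\uh$ and for $\PGL_2^+(\Q)$-relations. The only new feature is that our $f_i$ may take values in $\lh$. We remove it using the symmetry $j(z)=j(-z)$ on $\h$, which was observed after the definition of the extended $j$ via the $q$-expansion \eqref{eq:j-fourier-expansion}.

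\emph{Step 1: each $f_i$ stays in one half-plane.} Since $\Omega$ is a domain, it is connected. Each $f_i$ is continuous with values in $\h=\uh\sqcup\lh$, so $f_i(\Omega)$ lies entirely in $\uh$ or entirely in $\lh$. Set $\epsilon_i=\pm1$ accordingly and put $\tilde f_i:=\epsilon_i f_i$, so that every $\tilde f_i$ maps $\Omega$ into $\uh$. Then $j(\tilde f_i)=j(f_i)$, and $\C(\tilde f_i)=\C(f_i)$. Hence
\[ \C(f_1,\ldots,f_n,j(f_1),\ldots,j(f_n))=\C(\tilde f_1,\ldots,\tilde f_n,j(\tilde f_1),\ldots,j(\tilde f_n)), \]
so the transcendence degrees agree. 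The Jacobian rank is also unchanged, since we only multiplied rows by $\pm1$. Non-constancy is clearly preserved.

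\emph{Step 2: the independence hypothesis transfers.} Suppose $\tilde f_i=g\tilde f_k$ with $g\in\PGL_2^+(\Q)$. Then $f_i=\delta_i g\delta_k f_k$, where $\delta_\epsilon$ denotes $\mathrm{diag}(\epsilon,1)$, which acts as $z\mapsto \epsilon z$. The element $\delta_i g\delta_k$ lies in $\PGL_2(\Q)$, contradicting the hypothesis. So the $\tilde f_i$ satisfy no $\PGL_2^+(\Q)$-relations. Indeed they satisfy no $\PGL_2(\Q)$-relations at all, although relations with negative determinant cannot occur between $\uh$-valued functions anyway.

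\emph{Step 3: apply Pila--Tsimerman.} Their theorem, in its geometric form, says the following. Let $U\seq \uh^n\times\C^n$ be an irreducible analytic component of $W\cap\gr(j)$ for an algebraic $W$, not contained in any weakly special subvariety, i.e.\ with no constant coordinate and no $\PGL_2^+(\Q)$-relation among the coordinates. Then $\dim W\ge \dim U+n$.

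To get the functional version, take $U$ to be the analytic germ of the image of $\zbar\mapsto(\tilde f(\zbar),j(\tilde f(\zbar)))$ near a generic point of $\Omega$, so that $\dim U=\rk(\partial \tilde f_k/\partial z_l)$. Take $W$ to be its Zariski closure, so that $\dim W$ equals the transcendence degree above. Replacing $U$ by the component of $W\cap\gr(j)$ containing it only increases $\dim U$.

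The one point to check, which I expect to be the only real (and mild) obstacle, is that this component still avoids weakly special subvarieties. A constant coordinate or a $\PGL_2^+(\Q)$-relation holding on the larger component would hold on the germ $U$ itself, i.e.\ on the functions $\tilde f_i$ by the identity theorem, contrary to Steps 1--2. This gives
\[ \td_{\C}\C(\tilde f,j(\tilde f))\ge n+\rk\left(\frac{\partial \tilde f_k}{\partial z_l}\right)_{k,l}, \]
and by Step 1 this is exactly the claimed inequality for the $f_i$.
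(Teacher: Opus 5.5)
Your proposal is correct and takes the same route as the paper, which gives no proof and simply cites Pila--Tsimerman \cite{Pila-Tsim-Ax-j} for this Fact; your Steps 1--2 spell out the routine reduction from $\h$-valued to $\uh$-valued functions (via $j(z)=j(-z)$ and the element $z\mapsto -z$ of $\PGL_2(\Q)$), which the paper leaves implicit. Step 3 does more than is needed, since Pila--Tsimerman already state a functional version (with $j',j''$, from which this bound follows by discarding at most $2n$), but your derivation from the geometric form is also valid.
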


\begin{fact}[Ax--Schanuel, complex version]\label{Ax-Schanuel complex}
    Let $V \seq \C^n \times \C^n$ be an algebraic variety, and let $U \seq V \cap \gr(j)$ be an analytic component. If $\dim U > \dim V - n$ then the projection of $U$ to the second $n$ coordinates either has a constant coordinate or satisfies a modular equation (equivalently, the projection of $U$ to the first $n$ coordinates either has a constant coordinate or satisfies a $\PGL_2(\Q)$-relation).
\end{fact}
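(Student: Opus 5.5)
The plan is to deduce the complex version from the functional version (\Cref{Ax-Schanuel functional}) by parametrising $U$ locally and comparing the transcendence degree of the coordinate functions with $\dim V$. Put $d := \dim U$ and assume $d > \dim V - n$. I will argue by contradiction: suppose the projection of $U$ to the first $n$ coordinates has no constant coordinate and satisfies no $\PGL_2(\Q)$-relation.

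First I choose a smooth point $p$ of $U$ at which $U$ has dimension $d$. Near $p$, $U$ is contained in the graph of $j$, so the $\ybar$-coordinates are holomorphic functions of the $\xbar$-coordinates. Hence the projection $U \to \C^n$, $(\xbar,\ybar) \mapsto \xbar$, is injective near $p$, and its image is a $d$-dimensional complex manifold germ. I then parametrise this germ by a holomorphic immersion $\Omega \to \h^n$, $\zbar \mapsto (f_1(\zbar), \ldots, f_n(\zbar))$, where $\Omega \seq \C^d$ is a domain, so that $\rk\left(\partial f_k/\partial z_l\right) = d$ on $\Omega$. Under this parametrisation, $U$ near $p$ is exactly $\{(\fbar(\zbar), j(\fbar(\zbar))): \zbar \in \Omega\}$.

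Next I check the hypotheses of \Cref{Ax-Schanuel functional}. By our assumption, each $f_i$ is non-constant and no relation $f_i = g f_k$ with $g \in \PGL_2(\Q)$ holds identically; by the identity theorem, holding near $p$ is the same as holding on the whole analytic component. The functional Ax--Schanuel theorem therefore gives
\[ \td_{\C}\C(\fbar, j(\fbar)) \geq n + d. \]
On the other hand, the map $\zbar \mapsto (\fbar, j(\fbar))$ takes values in $V$. So the Zariski closure of its image is an irreducible subvariety of $V$ whose dimension equals $\td_\C \C(\fbar, j(\fbar))$, and thus $\td_\C \C(\fbar,j(\fbar)) \leq \dim V$. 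Combining the two bounds gives $\dim V \geq n+d$, contradicting $d > \dim V - n$. It follows that some $x$-coordinate is constant on $U$ or some $\PGL_2(\Q)$-relation holds on its $\xbar$-projection.

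Finally I prove the stated equivalence between the $\xbar$-side and $\ybar$-side conditions.
\begin{itemize}
\item If $f_i$ is constant then so is $j(f_i)$. Conversely, if $j(f_i)$ is constant on the connected set $\Omega$, then $f_i$ takes values in a fibre of $j$, which is discrete, so $f_i$ is constant.
\item If $f_k = g f_i$ with $g \in \PGL_2(\Q)$, then some $\Phi_N(j(f_i), j(f_k))$ vanishes identically, by the characterisation of modular polynomials.
\item Conversely, suppose $\Phi_N(j(f_i), j(f_k)) \equiv 0$. Then $\Omega$ is the union of the countably many closed analytic sets $\{f_k = g f_i\}$, $g \in \PGL_2(\Q)$. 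By Baire's theorem one of them has interior, and by the identity theorem it is all of $\Omega$.
\end{itemize}

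The step needing most care is the local parametrisation together with the claim that the transcendence degree is computed by the Zariski closure of an irreducible germ inside $V$. This is handled by working at a smooth point of $U$, where the germ is irreducible and the image of the connected domain $\Omega$ has an irreducible Zariski closure.
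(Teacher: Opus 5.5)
Your argument is correct. Note, though, that the paper gives no proof of this statement. It is quoted as a black box from Pila--Tsimerman, alongside the functional version (\Cref{Ax-Schanuel functional}), and the actual proof of Ax--Schanuel there goes through o-minimality and point counting.

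What you have shown is that the complex version is a formal consequence of the functional one:
\begin{enumerate}
\item At a regular point of the irreducible analytic set $U$, you use that $\pr_{\xbar}$ is biholomorphic on $\gr(j)$ to parametrise $U$ by an immersion $\fbar\colon\Omega\to\h^n$ of rank $d=\dim U$.
\item You compare the lower bound $n+d$ from the functional version with the upper bound $\dim V$ given by the irreducible Zariski closure of $(\fbar,j(\fbar))(\Omega)\seq V$.
\item You transfer the degeneracy conditions between $\Omega$ and $U$ via the identity theorem on the connected regular locus.
\end{enumerate}
This is the standard equivalence of the two formulations. It is precisely the alternative mentioned in the footnote in the proof of \Cref{lem: tau isolated sol}, and it shows that only one of the two cited facts is genuinely needed.

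Your Baire argument for turning $\Phi_N(j(f_i),j(f_k))\equiv 0$ into a single relation $f_k=gf_i$ is the right way to get the ``equivalently'' clause. In the opposite direction, the pointwise characterisation of modular polynomials a priori gives an $N$ depending on the point. This is easily fixed, either by the standard fact that $N$ is determined by $g$, or by the same Baire argument applied to the sets $\{\Phi_N(j(f_i),j(f_k))=0\}$, $N\in\Z_{>0}$.
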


\subsection{Rouch\'e's theorem}

Rouch\'e's theorem is the main complex analytic input in this paper.

\begin{fact}[Rouch\'e, see e.g.~{{\cite[Thm. 2 in Ch. IV §18.55]{Shabat}}}]
    \label{thm:rouche}
  Let $D \seq \C^m$ be a bounded domain with piecewise smooth boundary, and let $F,H : \bar{D} \to \C^m$ be two continuous maps which are holomorphic on $D$. Assume the zeros of $F$ are isolated, and for all $\zbar \in \partial D$ we have $\| F(\zbar) \| > \| H(\zbar) \|$. Then $F$ and $F+H$ have the same number of zeros in $D$, counting multiplicities.
\end{fact}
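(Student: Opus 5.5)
Since this is a cited fact, I would only sketch the standard argument. It rests on the homotopy invariance of the topological degree, combined with the fact that for holomorphic maps the local degree at an isolated zero equals its multiplicity.

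\textbf{Step 1: a homotopy with no boundary zeros.} For $t \in [0,1]$ consider $\Phi_t := F + tH$, which is continuous on $\bar D$ and holomorphic on $D$. For $\zbar \in \partial D$ we have
\[
\|\Phi_t(\zbar)\| \geq \|F(\zbar)\| - t\|H(\zbar)\| \geq \|F(\zbar)\| - \|H(\zbar)\| > 0 .
\]
So no $\Phi_t$ vanishes on $\partial D$. Since $\partial D$ is compact, there is moreover a uniform bound $\|\Phi_t\| \geq \varepsilon > 0$ on $\partial D \times [0,1]$.

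\textbf{Step 2: each $\Phi_t$ has finitely many zeros.} The zero set $Z_t$ of $\Phi_t$ in $D$ is a complex analytic subset of $D$. It is also closed in $\bar D$, because there are no zeros on $\partial D$, and hence it is compact. A compact analytic subset of a domain in $\C^m$ is finite, since by the maximum principle its positive-dimensional components could not be compact. Thus $Z_t$ is finite, and in particular its points are isolated. This matches the hypothesis on $F$, and it also shows the zeros of $F+H$ are isolated.

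\textbf{Step 3: the zero count is a degree.} I would define
\[
N(t) := \deg\bigl(\Phi_t/\|\Phi_t\| : \partial D \to S^{2m-1}\bigr),
\]
or equivalently the Brouwer degree $\deg(\Phi_t, D, 0)$. Concretely, $N(t)$ can be computed by integrating the pullback of the Bochner--Martinelli form over the piecewise smooth boundary. By Step 1 this degree is well defined, and it is invariant under the homotopy $t \mapsto \Phi_t$. Hence $N(0) = N(1)$.

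\textbf{Step 4: degree equals zeros with multiplicity.} By additivity (excision) of the degree, $N(t)$ is the sum of the local indices of $\Phi_t$ at the finitely many points of $Z_t$. It remains to identify each local index with the algebraic multiplicity $\dim_\C \mathcal{O}_{\zbar}/(\Phi_{t,1},\ldots,\Phi_{t,m})$.

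\begin{itemize}
\item At a nondegenerate zero this is immediate. The real Jacobian of a holomorphic map equals $|\det(\partial \Phi_{t,i}/\partial z_l)|^2 > 0$, so the local index is $1$.
\item At a degenerate isolated zero $\zbar_0$, I would perturb $\Phi_t$ by a small generic constant $c$. By Sard, $\Phi_t - c$ has only nondegenerate zeros near $\zbar_0$. Their number equals the multiplicity, by the standard fact that a finite holomorphic map germ has local covering degree equal to $\dim_\C \mathcal{O}/(\Phi)$. By homotopy invariance of the local degree, the local index is this number.
\end{itemize}

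Combining $N(0)=N(1)$ with this identification shows that $F$ and $F+H$ have the same number of zeros in $D$, counted with multiplicity.

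I expect Step 4 to be the main obstacle: the equality between the local topological degree and the algebraic multiplicity at degenerate zeros. It is classical (see Shabat, or Griffiths--Harris), and I would cite it rather than reprove it. Everything else is routine degree theory.
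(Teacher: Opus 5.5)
Your sketch is correct. The paper does not prove this fact; it only cites Shabat. Your route (the homotopy $F+tH$ with no zeros on $\partial D$, homotopy invariance of the degree or logarithmic residue, and identification of each local index with the multiplicity) is essentially the standard argument behind that citation. Your Step 2 also rightly shows that the isolated-zeros hypothesis follows from $\|F\|>\|H\|\ge 0$ on $\partial D$.

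One small caveat: the Bochner--Martinelli integral over $\partial D$ requires $\Phi_t$ to be $C^1$ up to the boundary. Since $F$ and $H$ are only continuous on $\bar D$, it is your Brouwer-degree formulation, not the integral, that carries the argument.
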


\begin{corollary}\label{cor: Rouche}
    Let $D \seq \C^m$ be a bounded domain with piecewise smooth boundary, and let $F,F_s : \bar{D} \to \C^m, s \in \Z_{>0},$ be continuous maps which are holomorphic on $D$. Assume the sequence $F_s$ uniformly converges to $F$ on $\bar{D}$. Assume further that $F$ does not vanish on $\partial D$, has a zero in $D$, and all of its zeros are isolated. Then for all sufficiently large $s$, the map $F_s$ has a zero in $D$.
\end{corollary}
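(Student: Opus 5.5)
We will derive the corollary from Rouché's theorem (\Cref{thm:rouche}) by writing $F_s = F + H_s$ with $H_s := F_s - F$. The whole argument is a compactness estimate on the boundary. Nothing beyond \Cref{thm:rouche} and uniform convergence is needed. The only point requiring care is that the inequality $\|F\| > \|H_s\|$ must hold uniformly on $\partial D$, which is why we need $F$ non-vanishing on $\partial D$ together with compactness.

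First, since $D$ is bounded, $\partial D$ is compact. Since $F$ is continuous on $\bar D$ and does not vanish on $\partial D$, the quantity
\[ \delta := \min_{\zbar \in \partial D} \| F(\zbar) \| \]
is attained and strictly positive. We may assume $\partial D\neq\emptyset$, which holds for any bounded non-empty domain in $\C^m$.

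Next, by uniform convergence of $F_s$ to $F$ on $\bar D$, there is $s_0$ such that for all $s \geq s_0$,
\[ \sup_{\zbar \in \bar D} \| F_s(\zbar) - F(\zbar) \| < \delta. \]
Set $H_s := F_s - F$. Each $H_s$ is continuous on $\bar D$ and holomorphic on $D$, being a difference of such maps. For $s \ge s_0$ and every $\zbar \in \partial D$ we get
\[ \| H_s(\zbar) \| < \delta \leq \| F(\zbar) \|. \]

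Finally, the hypotheses of \Cref{thm:rouche} are met for the pair $F, H_s$: the zeros of $F$ are isolated by assumption, and the boundary inequality was just shown. Hence $F$ and $F + H_s = F_s$ have the same number of zeros in $D$, counted with multiplicity. Since $F$ has at least one zero in $D$, this number is at least one. So $F_s$ has a zero in $D$ for all $s \geq s_0$.
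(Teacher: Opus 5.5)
Your proof is correct and is essentially the paper's argument: set $\delta=\min_{\partial D}\|F\|>0$, use uniform convergence to get $\|F_s-F\|<\delta$ on $\bar D$ for large $s$, and apply Rouch\'e's theorem to $F$ and $H_s=F_s-F$. Your remarks on the compactness of $\partial D$ and the holomorphy of $H_s$ just make explicit what the paper leaves implicit.
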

\begin{proof}
    Let $\delta := \min_{\zbar \in \partial D} \| F(\zbar) \|$. Since $F$ does not vanish on $\partial D$, we have $\delta>0$. Uniform convergence of $F_s$ to $F$ implies that for large enough $s$ we have \[ \| F_s(\zbar) - F(\zbar) \| < \delta \text{ for all } \zbar \in \bar{D}. \]

    Since $F$ has a zero in $D$, by \Cref{thm:rouche}, $F_s = (F_s-F)+F$ also has a zero in $D$.
\end{proof}

In the rest of the paper when we refer to Rouch\'e's theorem, we actually use \Cref{cor: Rouche}.

\section{Special cases and examples}\label{sec: special cases and examples}

In this section we look at three special cases of MEC which will help the reader grasp the main ideas of our proof in these simple examples and get some intuition before reading the full proof in the next section. We try to follow the arguments and notation of \S\ref{sec: proof of main thm} to make the comparison between the special and general cases easier. In Appendix \ref{appendix} we illustrate the zeros and their distribution via domain colouring for two complex functions that we study below.

\subsection{MEC for $n=2$}\label{subsec: n=2 case}

\begin{theorem}\label{thm: EC n=2}
   Let $V \seq \C^2 \times \C^2$ be a broad and free variety. Then $V \cap \gr(j) \neq \emptyset$.
\end{theorem}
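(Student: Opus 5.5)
We plan to follow the general strategy from the overview, specialised to $n=2$, using \Cref{thm: density of analytic orbits general version} with $m=1$ and Rouch\'e's theorem in the form of \Cref{cor: Rouche}. First we reduce to $\dim V = 2$ by intersecting $V$ with generic hyperplanes (a generic linear section of a broad and free variety of dimension $>2$ remains broad and free, since broadness only involves projections of dimension at least $|I|$ and freeness is an open condition on the section). Broadness gives that the projections of $V$ to $(x_1,y_1)$ and to $(x_2,y_2)$ have dimension at least $1$, and freeness rules out constant coordinates. We then split into cases according to a transcendence basis of the coordinate functions on $V$. If $(x_1,x_2)$ is a basis, the projection to $\xbar$ is dominant and we are done by Eterovi\'c--Herrero. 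If $(y_1,y_2)$ is a basis, we pick $(\tau_1,\tau_2)\in\h^2$ with $(j(\tau_1),j(\tau_2))$ a generic value in the image. Then $x_1,x_2$ are locally algebraic functions of $(y_1,y_2)$, and we reduce to the mixed case below by an analogous approximation argument. The essential case is that, after relabelling, $(x_1,y_2)$ is a basis. Then $y_1=Q(x_1,y_2)$ and $x_2=P(x_1)$ or $x_2=P(x_1,y_2)$ are algebraic functions.

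Consider first the hardest subcase, $x_2 = P(x_1)$ (with $x_2$ depending only on $x_1$). The system becomes $Q(z, j(P(z))) = j(z)$. Freeness says $P$ is non-constant and not a $\PGL_2(\Q)$-translate of $z$. Choose a real point $r$ at which a branch of $P$ is holomorphic and $Q$ is defined near the relevant values. If $P(r)\in\h$, take $\zeta_s\in\PGL_2(\Z)$ with $\zeta_s z\to r$ locally uniformly. Then $j(\zeta_s z)=j(z)$, and $Q(\zeta_s z, j(P(\zeta_s z)))-j(z)\to Q(r,j(P(r)))-j(z)$, which has isolated zeros in $\h$. If $P$ is real on a real interval $B$, we apply \Cref{thm: density of analytic orbits general version} with $N=m=1$, $F_1=P$: for any $h\in\PGL_2(\R)$ there are $\gamma_s,\zeta_s$ with $\zeta_s z\to r$ and $\gamma_sP(\zeta_s z)\to hz$. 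Since $j(\gamma_s w)=j(w)$, the functions $Q(\zeta_s z, j(P(\zeta_s z)))-j(z)$ converge locally uniformly to $Q(r, j(hz)) - j(z)$. Choose $(u,v)$ with $Q(r,v)=u$ generic, then $\tau\in\h$ with $j(\tau)=u$ and $h$ with $j(h\tau)=v$. Thus $\tau$ is a zero of the limit. The limit is a non-constant function of one variable (for a generic choice it is not identically zero, e.g. by comparing behaviour near cusps, or by Ax--Schanuel applied to $z,hz$ for $h$ outside $\PGL_2(\Q)$), so $\tau$ is isolated. \Cref{cor: Rouche} on a small disc around $\tau$ then gives zeros $\tau_s$ of the approximants, and $\zeta_s\tau_s$ solves the original system.

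When $x_2=P(x_1,y_2)$ genuinely depends on $y_2$, we work in two variables. We set $z_1$ free, $w=y_2$, and write the system as $j(z_1)=Q(z_1,w)$, $w=j(P(z_1,w))$. We try first to find a real $r$ and a value $w_0$ with $P(r,w_0)\in\h$; this should be available since $P(\cdot,w)$ is not real-valued for generic complex $w$. Letting $\zeta_s z_1\to r$, we obtain the limiting system $j(z_1)=Q(r,w)$, $w=j(P(r,w))$. The second equation is a one-variable equation in $w$ which has isolated solutions by non-constancy and Picard-type surjectivity of $j$ onto $\C$. Given such a solution $w_0$, we pick $z_1$ with $j(z_1)=Q(r,w_0)$. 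Ax--Schanuel (\Cref{Ax-Schanuel complex}) together with freeness is used to ensure that this solution is isolated, and Rouch\'e then finishes as before.

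The main obstacle we expect is guaranteeing isolatedness of the limiting zero and non-degeneracy of the chosen branches, uniformly across the subcases. In particular, we must show that the limit function is not identically zero and that the points avoid the branch loci of $P,Q$. We would handle this by choosing $r$, $h$, and the point $(u,v)$ generically and invoking Ax--Schanuel to exclude identical vanishing.
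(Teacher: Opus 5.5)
Your core argument is the paper's proof. This is the subcase $x_2=P(x_1)$, split by whether some real $r$ has $P(r)\in\h$ or $P$ is real on the reals, followed by \Cref{thm: density of analytic orbits general version} with $N=m=1$ and \Cref{cor: Rouche}. The only variation is how you show $Q(r,j(hz))-j(z)\not\equiv 0$. You take $h\notin\PGL_2(\Q)$, which is possible because the $h$ with $j(h\tau)=v$ form an uncountable set, and apply Ax--Schanuel to $z,hz$. The paper mentions exactly this option. It instead chooses $(r,\theta)$ generically and uses freeness against $\Phi_N(y_1,y_2)=0$, because that is the version that generalises to \Cref{lem: tau isolated sol}.

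Your remaining cases are superfluous, and you should drop them rather than rely on the arguments you sketch for them.
\begin{itemize}
\item If $\pr_{\xbar}V$ is not dominant, then $x_1,x_2$ are algebraically dependent. Since no coordinate is constant, $x_2\in\C(x_1)^{\alg}$. So a basis $\{x_1,y_2\}$ with $x_2$ genuinely depending on $y_2$ forces $\{x_1,x_2\}$ to be a basis, and you are already in the Eterovi\'c--Herrero case.
\item If $\{y_1,y_2\}$ is a basis but $\{x_1,x_2\}$ is not, then $\cl(x_1)=\cl(x_2)$ has dimension $1$. One of $y_1,y_2$ lies outside it, so after relabelling you get a basis $\{x_1,y_2\}$ with $x_2=P(x_1)$. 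This is \Cref{lem: broad pregeom} with $m=1$.
\end{itemize}

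This matters because the sketches for those cases are not sound as written. Surjectivity of $j$ does not by itself give a solution of $w=j(P(r,w))$; that is itself an $n=1$ existential closedness problem. The ``analogous approximation argument'' for the $\{y_1,y_2\}$ case is never specified.

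A minor point on the reduction to $\dim V=2$: freeness is a countable family of conditions, not an open condition. The hyperplane sections must therefore be chosen very general, which is the content of \Cref{lem: reduce dim}.
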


\begin{proof}[Proof sketch]
    
We may assume without loss of generality that $\dim V = 2$ (see \Cref{lem: reduce dim}). When the projection of $V$ to the first two coordinates is dominant, we know the result by \cite[Thm. 1.1]{eterovic-herrero}.\footnote{In our general proof we do not refer to this paper as our proof contains theirs as a special case.} So we assume the projection has dimension 1. Using broadness and swapping the pairs of coordinates $(x_1,y_1)$ and $(x_2,y_2)$ if necessary, we can assume $x_1, y_2$ form a transcendence basis of the function field $\C(V)$, and find algebraic functions $P(x_1)$ and $Q(x_1, y_2)$ such that the set defined by
\[ x_2=P(x_1), y_1=Q(x_1,y_2) \]
is contained in $V$.

We should be careful about the domains of definition of $P$ and $Q$, but we ignore this issue here and discuss it thoroughly in \S\ref{sec: proof of main thm}, and illustrate with an example in \S\ref{subsec: less simple example}. So, for the sake of simplicity, the reader may assume $P$ and $Q$ are polynomials.

Then we want to solve the equation
\begin{equation}\label{eqn: example n=2}
    j(z) = Q(z, j(P(z))).
\end{equation}

\textbf{Case 1.} There is a real point $r$ where $P(r) \in \h$.

First, note that if $P(r)\in \h$ then a whole real neighbourhood of $r$ must map to $\h$, so $Q(z, j(P(z)))$ is defined on such a neighbourhood. Pick $\tau \in \h$ such that $j(\tau) = Q(r, j(P(r)))$.

Fix a disc $D$ around $\tau$ such that on the boundary $\partial D$ the function \[ f(z):= j(z) - Q(r, j(P(r))) \] does not vanish. Further, pick a sequence $\zeta_s \in \PSL_2(\Z)$ such that $\zeta_s z \to r$ uniformly for $z \in \bar{D}$ as $s \to \infty$.  Consider the functions \[ f_s(z) := j(\zeta_s z) - Q(\zeta_s z, j(P(\zeta_s z))) .\]

Since $\zeta_s z \to r$ uniformly for $z \in \bar{D}$, and $P$ is defined and holomorphic around $r$ with $P(r) \notin \R$, we conclude that $f_s \to f$ uniformly in $\bar{D}$. We also know that $f(\tau)=0$, therefore, by Rouch\'e's theorem, for sufficiently large $s$, there is $\tau_s \in D$ such that $f_s(\tau_s) = 0$. Then $\zeta_s \tau_s$ solves the equation \eqref{eqn: example n=2} as required.

\textbf{Case 2.} $P$ maps all real points (where it is defined) to $\R$.

Pick arbitrary $r\in \R$, $\theta \in \C$, then choose $\tau \in \h$ such that $j(\tau) = Q(r, \theta)$. Let $h \in \PGL_2(\R)$ be such that $j(h \tau) = \theta$. By construction, $\tau$ is a zero of \[ f(z):=j(z) - Q(r, j(hz)).\]

We want to ensure that $f$ does not vanish identically for a suitable choice of $(r, \theta)$. To this end, observe that if $f\equiv 0$ then, by Ax--Schanuel, for some modular polynomial $\Phi_N$ we must have $\Phi_N(j(z), j(hz)) \equiv 0$. At this point, we could argue that then $h \in \PGL_2(\Q)$ and notice that we could in fact have chosen $h$ outside $\PGL_2(\Q)$ while maintaining $j(h \tau) = \theta$. However, we use a slightly different argument which is in line with what we do in the general case. The identity $\Phi_N(j(z), j(hz)) \equiv 0$ at $z=\tau$ implies $\Phi_N(Q(r,\theta), \theta) = 0$. For a fixed $N$, the set of all $(r, \theta) \in \R \times \C$ for which $\Phi_N(Q(r,\theta), \theta) = 0$ is a closed nowhere dense set. Otherwise $\Phi_N(Q(X,Y), Y) = 0$ which then implies that on $V$ we have $\Phi_N(y_1,y_2) = 0$, which contradicts the freeness of $V$. Thus, any choice of $(r, \theta)$ with $\Phi_N(Q(r,\theta), \theta) \neq 0$ for all $N$ would work. The set of all such $(r, \theta)$ is co-meagre, so a generic choice works.

Let $D$ be a disc around $\tau$ such that $f$ does not vanish on the boundary $\partial D$. By \Cref{thm: density of analytic orbits general version}, there are sequences $\gamma_s, \zeta_s \in \PGL_2(\Z), s \in \Z_{>0},$ such that $\zeta_s z \to r$  and $\gamma_s \circ P \circ \zeta_s \to h$ uniformly for $z \in \bar{D}$. Let
\[ f_s(z):=  j(\zeta_s z) - Q(\zeta_s z, j(\gamma_s P(\zeta_s z))). \]

Since $\zeta_s z \to r$ and $\gamma_s P( \zeta_s z) \to h(z)$ uniformly in $\bar{D}$, we conclude that $f_s\to f$ uniformly in $\bar{D}$. Hence, by Rouch\'e's theorem, $f_s(z)$ has a zero $\tau_s \in D$ for all large enough $s$. Then $\zeta_s \tau_s$ is a solution of \eqref{eqn: example n=2}. This finishes the proof.
 \end{proof}

\subsection{A simple example}\label{subsec: simple example}

Let $V \seq \C^3 \times \C^3$ be given by the equations 
\begin{align*}
     & x_2 = i x_1,\\
     & x_3 = x_1 + y_2,\\
     & x_1^3 + y_1 +y_2^2 +y_3 = 0.
\end{align*}

We need to solve the following equation:
\begin{equation}\label{eqn: example 1}
    z^3+j(z) + j(iz)^2 + j(z+j(iz)) = 0.
\end{equation}

Pick an arbitrary non-zero real point $r$ and an arbitrary $\theta_3 \in \C$. Then pick $\tau \in \h$ with $j(\tau) = -r^3-j(ir)^2-\theta_3$ and $h \in \PGL_2(\R)$ with $j(h\tau) = \theta_3$. Then the function \[ f(z) := r^3+ j(z) + j(ir)^2+ j(hz)\]
vanishes at $\tau$. As in \S\ref{subsec: n=2 case}, we can show that it does not vanish identically for a suitable choice of $(r, \theta_3)$.

Let $F_3(z):= z+j(iz)$. Then $F_3$ maps non-zero reals to reals. By \Cref{thm: density of analytic orbits general version}, there are sequences $\gamma_s, \zeta_s \in \PGL_2(\Z), s \in \Z_{>0},$ such that $\zeta_s z \to r$  and $\gamma_s \circ F_3 \circ \zeta_s \to h$ locally uniformly in $\h$.

 Consider the sequence of functions
\begin{align*}
    f_s(z) := (\zeta_s z)^3 + j(\zeta_s z) + j(i \zeta_s z)^2 + j(\gamma_s F_3( \zeta_s z)), s \in \Z_{>0}.
\end{align*}

Let $D$ be a disc around $\tau$ such that $f$ does not vanish on the boundary $\partial D$.  Since $\zeta_s z \to r$ and $\gamma_s F_3( \zeta_s z) \to h(z)$ uniformly in $\bar{D}$, we conclude that $f_s\to f$ uniformly in $\bar{D}$. Hence, by Rouch\'e's theorem, $f_s(z)$ has a zero $\tau_s \in D$ for large $s$. Then $\zeta_s \tau_s$ is a solution of \eqref{eqn: example 1}.

\subsection{A less simple example}\label{subsec: less simple example}

We now look at another example and provide a detailed proof of existence of solutions following our general strategy implemented in \S\ref{sec: proof of main thm}. Most of the key ideas of the general proof are present here but some technical aspects are simpler which, along with explicit functions and equations, makes it easier to grasp these key ideas.

    Let $V \seq \C^5 \times \C^5$ be given by the equations 
\begin{align*}
     & i x_1+ix_2 - x_3 = 0,\\
     & x_4^2 (x_1+x_2 + y_3 - y_3^2) - 2x_4y_3 - 1 =0,\\
     & x_1 + y_4 - x_5 = 0,\\
     & y_1 - x_1 - y_3 = 0,\\
     & x_2+y_4+y_5 - y_2 = 0.
\end{align*}

We choose a branch of square root and write this as follows.  
\begin{align*}
     & x_3 = i (x_1+x_2),\\
     & x_4 = \frac{1}{\sqrt{x_1+x_2+y_3}-y_3},\\
     & x_5 = x_1 + y_4,\\
     & y_1 = x_1 + y_3,\\
     & y_2 = x_2+y_4+y_5.
\end{align*}

For the sake of being precise, we choose the branch of square root mapping positive reals to positive reals defined on the complex plane with the real interval $(-\infty, 0]$ removed. This choice of the branch cut is arbitrary; we just must ensure that we remove the ramification point $0$ and make the set simply connected (by introducing a branch cut).

Writing $x_l = z_l$, $y_l = j(z_l)$, and $\zbar:=(z_1, z_2)$, we get
\begin{equation}\label{eqn: Example 2}
    \begin{aligned}
     & z_3 = i (z_1+z_2) =: Z_3(\zbar),\\
     & z_4 = \frac{1}{\sqrt{z_1+z_2 + j(Z_3(\zbar))} -j(Z_3(\zbar))} =: Z_4(\zbar),\\
     & z_5 = z_1 + j \left( Z_4(\zbar) \right) =: Z_5(\zbar),\\
     & j(z_1) = z_1 + j(Z_3(\zbar)),\\
     & j(z_2) = z_2+ j (Z_4(\zbar)) + j (Z_5(\zbar)) .
\end{aligned}
\end{equation}

The first three equations define the functions $Z_3, Z_4, Z_5$ on a suitable domain, and we want to show that the system consisting of the last two equations has a solution.

As in the above examples, we will let $\zbar$ tend to a real point $\rbar \in \R^2$ to find limiting equations with a solution, and apply Rouch\'e's theorem to conclude that \eqref{eqn: Example 2} has a solution. The function $j(Z_3) = j(i(z_1+z_2))$ is well defined when $z_1+z_2$ is a non-zero real number. However, for any real $t \neq 0$ the value $t+j(it)$ is real and positive, hence its square root is real. Thus, $Z_4(\zbar)$ maps real points to real points and so $j(Z_4(\rbar))$ is not defined. As above, we will instead approximate this function by a function of the form $j(hz_1)$ with $h \in \PGL_2(\R)$ (we could also approximate by $j(hz_2)$). To do this formally, and to illustrate the general proof strategy implemented in \S\ref{sec: proof of main thm}, we proceed as follows.

Let $\wbar:=(w_3, w_4, w_5)$. We define the functions $F_k(\zbar, \wbar), G_k(\zbar, \wbar), k=3,4,5,$ as follows:
\begin{itemize}
    \item $F_3(\zbar, \wbar) = i(z_1+z_2)$, $G_3(\zbar, \wbar) = j(F_3(\zbar, \wbar)) = j(i(z_1+z_2))$;
    \item $F_4(\zbar, \wbar) = \frac{1}{\sqrt{z_1+z_2 + G_3} -G_3} = \frac{1}{\sqrt{z_1+z_2 + j(i(z_1+z_2))} -j(i(z_1+z_2))}$, $G_4(\zbar, \wbar) = w_4$;
    \item $F_5(\zbar, \wbar) = z_1 + G_4(\zbar, \wbar) = z_1 + w_4$, $G_5(\zbar, \wbar) = j(F_5(\zbar, \wbar))=  j(z_1+w_4)$.
\end{itemize}

Observe that since $Z_3$ is not real-valued on the reals, we have $F_3 = Z_3, G_3 = j(Z_3)$. But $F_4=Z_4$ is real-valued, so instead of $j(Z_4)$ we define $G_4$ as an independent variable $w_4$. The reason is that we are not bound by the value $j(Z_4(\rbar))$, which is undefined, and are able to approximate any complex value (generic enough). Hence, we think of $j(Z_4)$ as an independent function/variable. Once we have a $G_4$ depending on $\wbar$, the subsequent $F_k$'s and $G_k$'s may also depend on $\wbar$. In this case, $F_5 = z_1+w_4$ and since we will let $z_1$ tend to a real point and $w_4$ tend to a generic complex point, the value of $F_5$ would then be non-real, so $G_5$ is defined as $j(F_5)$. Note that the $F_k$'s and $G_k$'s depend only on $\zbar, w_4$, so we will write $F_k(\zbar, w_4)$ and $G_k(\zbar, w_4)$.

Let us also discuss where these functions are defined. Clearly, $F_3$ is defined everywhere, $G_3$ is defined in $\Omega_3:= \{ (z_1,z_2,w_4)\in \C^3: z_1+z_2 \notin i \R \}$. The function $F_4$ is defined on $\Omega_4 := \{ (z_1,z_2,w_4) \in \Omega_3: z_1+z_2+j(i(z_1+z_2)) \notin (-\infty, 0] \text{ and } z_1+z_2+j(i(z_1+z_2))\neq j(i(z_1+z_2))^2 \}$. Finally, for $F_5, G_5$ we have $\Omega_5:= \{ (z_1,z_2,w_4) \in \Omega_4: z_1+w_4 \notin \R \}$.

We let $\Omega \seq \Omega_5$ be a domain such that $\pr_{\zbar}\Omega$ contains a real point (hence a real box), and consider all of these functions on $\Omega$. 

To ensure $\Omega_4$ is non-empty we must verify that the functions $z_1+z_2+j(i(z_1+z_2))$ and $z_1+z_2+j(i(z_1+z_2))-j(i(z_1+z_2))^2$ are not identically zero. While it is straightforward in this case, in general we use the Ax--Schanuel theorem to guarantee this. This is where the freeness of $V$ is used.

Note that if, instead of the branch cut $(-\infty, 0]$ we had chosen the branch cut $[0,\infty)$ then we could not ensure $\pr_{\zbar}\Omega\cap \R^2 \neq \emptyset$, for $t+j(it) \in [0, \infty)$ whenever $t \in \R\setminus \{ 0 \}$. However, we only need to check finitely many such conditions, so almost any choice of the branch cut would work.

Now pick $\rbar = (r_1,r_2) \in \R^2$ and $\theta_4 \in \C$ with $(\rbar, \theta_4) \in \Omega$. Set $\tau_k := F_k(\rbar, \theta_4)$ and $ \xi_k:= G_k(\rbar, \theta_4)$. Now pick $\tau_1, \tau_2 \in \h$ with
\begin{align*}
    & j(\tau_1) = r_1+ G_3(\rbar, \theta_4) = r_1+ j(i(r_1+r_2)),\\
    & j(\tau_2) = r_2 + G_4(\rbar, \theta_4) + G_5(\rbar, \theta_4) = r_2 + \theta_4 + j(r_1+ \theta_4).
\end{align*}

Further, pick $h_4 \in \PGL_2(\R)$ such that $j(h_4 \tau_1) = \theta_4$. Consider the functions
\begin{align*}
    & H_1(\zbar) := r_1+ G_3(\rbar, j(h_4 z_1)) = r_1+ j(i(r_1+r_2)),\\
    & H_2(\zbar) := r_2 + G_4(\rbar,  j(h_4 z_1)) + G_5(\rbar,  j(h_4 z_1)) = r_2 +  j(h_4 z_1) + j(r_1+  j(h_4 z_1)).
\end{align*}
By construction, $\taubar:= (\tau_1, \tau_2)$ is a solution to the system 
\begin{equation}\label{eqn: Ex 2 limit eqn}
    j(z_l) = H_l(\zbar), l=1,2.
\end{equation}
In fact, $\taubar$ is an isolated solution of this system of equations. This is obvious in this example as $H_1$ is constant. In general, we interpret the solutions to \eqref{eqn: Ex 2 limit eqn} as projections of intersections of a certain algebraic variety with the graph of $j$, and apply the Ax--Schanuel theorem to deduce that the solution set is 0-dimensional. We use the freeness of $V$ here.

The system \eqref{eqn: Ex 2 limit eqn} constitutes the limiting system that we will approximate using the original equations.

Recall that we want to show that the following system has a solution:
\begin{equation}\label{eqn: Example 2 -- two eqn to solve}
    \begin{aligned}
     & j(z_1) = z_1 + j(Z_3(\zbar)),\\
     & j(z_2) = z_2+ j (Z_4(\zbar)) + j (Z_5(\zbar)) .
\end{aligned}
\end{equation}

We write these equations in terms of $G_k$'s:
\begin{equation}\label{eqn: Example 2 -- two eqn to solve G_k}
    \begin{aligned}
     & j(z_1) = z_1 + G_3(\zbar, j(F_4(\zbar))),\\
     & j(z_2) = z_2+ G_4(\zbar, j(F_4(\zbar))) + G_5(\zbar, j(F_4(\zbar))) .
\end{aligned}
\end{equation}
Here, by abuse of notation, we omit $w_4$ from $F_4(\zbar, w_4)$ as $F_4$ does not depend on $w_4$.

Now, since $F_4$ is real-valued at real points, we apply \Cref{thm: density of analytic orbits general version} and get sequences $\gamma_{4,s}, \zeta_{1,s}, \zeta_{2,s} \in \PGL_2(\Z)$ such that
\begin{itemize}

        \item for $l=1,2$, as  $s \to \infty$, we have $\zeta_{l,s} z \to r_l$  locally uniformly in  $\h$;

        \item as  $s \to \infty$, we have $\gamma_{4,s} F_4(\zeta_{1,s} z_1, \zeta_{2,s} z_2 ) \to h_4 z_1$ locally uniformly in $\h^2$.
\end{itemize}

We use these sequences to make a change of variables in \eqref{eqn: Example 2 -- two eqn to solve}:
\begin{equation}\label{eqn: Example 2 -- two eqn to solve G_k -- cchange of var}
    \begin{aligned}
     & j(\zeta_{1,s} z_1) =\zeta_{1,s} z_1 + G_3(\zetabar_s\zbar, j(\gamma_{4,s}F_4(\zetabar_s\zbar))),\\
     & j(\zeta_{2,s}z_2) = \zeta_{2,s}z_2+ G_4(\zetabar_s\zbar, j(\gamma_{4,s}F_4(\zetabar_s\zbar))) + G_5(\zetabar_s\zbar, j(\gamma_{4,s}F_4(\zetabar_s\zbar))) .
\end{aligned}
\end{equation}
Here we used the shorthand $\zetabar_s\zbar:= (\zeta_{1,s} z_1, \zeta_{2,s}z_2)$.

Writing $H_{l,s}(\zbar), l=1,2,$ for the right hand sides of the equations in \eqref{eqn: Example 2 -- two eqn to solve G_k -- cchange of var}, we see that $H_{l,s} \to H_l$ uniformly in a small neighbourhood of $\taubar$ as $s \to \infty$. Working in such a neighbourhood, and taking into account the $\PGL_2(\Z)$-invariance of $j$, we apply Rouch\'e's theorem and conclude that for sufficiently large $s$ the system \eqref{eqn: Example 2 -- two eqn to solve G_k -- cchange of var} has a solution $\taubar_s$ in the said neighbourhood of $\taubar$. Then $\zetabar_s \taubar_s$ is a solution to \eqref{eqn: Example 2 -- two eqn to solve G_k}.

\section{Proof of the main theorem}\label{sec: proof of main thm}

In this section, assuming \Cref{thm: density of analytic orbits general version} (proven in the next section), we prove our main theorem which we restate below for the convenience of the reader.

\mainthm*

\subsection{Reductions}

\begin{lemma}\label{lem: reduce dim}
    If $V \seq \C^{2n}$ is free and broad, then there is a free and broad subvariety $V' \seq V$ of dimension $n$. Hence, in \Cref{thm: EC full} we may assume $\dim V = n$.
\end{lemma}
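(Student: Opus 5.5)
We induct on $d := \dim V - n \geq 0$, cutting $V$ down by one generic hyperplane section at a time. If $d=0$ there is nothing to do. If $d>0$, we look for a hypersurface $H \seq \C^{2n}$ such that some irreducible component $V_1$ of $V \cap H$ has dimension $\dim V - 1$ and is still broad and free. Iterating this $d$ times produces $V'$. Since $V' \seq V$, any point of $V' \cap \gr(j)$ lies in $V \cap \gr(j)$, which gives the second sentence.

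For broadness we use a combinatorial description. For $I \seq \{1,\ldots,n\}$ set $\delta(I) := \dim \pr_I V - |I|$. This function is submodular in the sense that $\dim \pr_{I\cup J} V + \dim \pr_{I \cap J} V \le \dim \pr_I V + \dim \pr_J V$, since $\dim \pr_I V$ is the rank function of the algebraic matroid of the coordinate functions grouped in blocks. So broadness says $\delta \geq 0$ on non-empty $I$. Call $I$ \emph{tight} if $\delta(I)=0$. By submodularity, the union of two tight sets with non-empty intersection is tight, and in fact the union of all tight sets is tight. Let $T$ be the maximal tight set (possibly empty). Since $d>0$, $T \ne \{1,\ldots,n\}$, so there is an index $i_0 \notin T$.

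We intersect $V$ with a generic affine hyperplane $H$ in the variables $(x_{i_0},y_{i_0})$ only, for instance $x_{i_0} = a y_{i_0} + b$ with generic $a,b \in \C$. When $x_{i_0}, y_{i_0}$ are both non-constant, each $\pr_I V$ with $i_0 \in I$ drops dimension by exactly one and each $\pr_I V$ with $i_0 \notin I$ is unchanged; this holds for a generic such hyperplane, for a suitable component $V_1$ of $V \cap H$. A set $I \ni i_0$ is not tight, so it had $\delta(I)\geq 1$ and now has $\delta \geq 0$; other sets are unaffected. Hence $V_1$ is broad.

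The main obstacle is freeness. We need to avoid three kinds of degeneracy on $V_1$: a constant coordinate, a relation $x_i = g x_k$ with $g \in \PGL_2(\Q)$, and a relation $\Phi_N(y_i,y_k)=0$. There are only countably many potential relations, and each must be shown to fail for a non-meagre (indeed co-countable or generic) set of parameters $(a,b)$. Consider a fixed relation $R$ not holding on $V$. The set of $(a,b)$ for which $R$ holds on a component of $V\cap H_{a,b}$ is constructible, so it suffices to show it is not all of $\C^2$. Suppose otherwise. Then, varying $(a,b)$, these components sweep out a dense subset of $V$ (the hyperplanes cover everything), and by irreducibility and closedness $R$ would hold on $V$, a contradiction. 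A little care is needed to choose components consistently, e.g. via the generic fibre over $(a,b)$. Constant coordinates are handled similarly, taking $d>0$ so that $\dim V_1 \geq n \geq 1$, and using that a coordinate non-constant on $V$ stays non-constant on a generic hyperplane section of dimension $\geq 1$ unless the hyperplane is a level set, which is non-generic. Since a countable union of proper constructible subsets of $\C^2$ cannot cover $\C^2$, we can choose $(a,b)$ outside all bad sets, and the resulting $V_1$ is broad, free, and of dimension $\dim V - 1$.
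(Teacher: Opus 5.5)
Your plan of cutting by one section $H_{a,b}\colon x_{i_0}=ay_{i_0}+b$ at a time, with $i_0$ outside the maximal tight set $T$, can be made to work, and the tight-set bookkeeping is correct. However, two steps are wrong as written.

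\textbf{Broadness and constant coordinates.} You claim that $\dim\pr_I V$ is unchanged for $i_0\notin I$. This is false. Take $n=3$ and $V=\{x_1=x_2+x_3,\ y_1=y_2+y_3\}$, which is broad and free with $\dim V=4$ and $T=\emptyset$, and take $i_0=1$. The section $x_1=ay_1+b$ cuts $\pr_{\{2,3\}}V=\C^4$ down to the hypersurface $x_2+x_3=a(y_2+y_3)+b$. For generic $(a,b)$, the projection to $I$ drops (by exactly one) precisely when $x_{i_0}-ay_{i_0}$ is algebraic over the $I$-coordinates. Since $a$ is generic, this happens precisely when $x_{i_0}$ and $y_{i_0}$ both are. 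For $i_0\notin I$ this gives $\dim\pr_{I\cup\{i_0\}}V=\dim\pr_I V$, hence $\delta(I)=\delta(I\cup\{i_0\})+1\ge 2$, because $I\cup\{i_0\}$ is not tight. So broadness survives, but this is the argument you need.

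Likewise, your reason that coordinates stay non-constant (``unless the hyperplane is a level set'') is not valid for your restricted family. If $x_{i_0},y_{i_0}$ were algebraically dependent on $V$, both would be constant on every component of $V\cap H_{a,b}$, although $H_{a,b}$ is a level set of neither. The correct criterion is that a coordinate $u$ becomes constant on the generic section if and only if $x_{i_0}$ and $y_{i_0}$ are both algebraic over $\C(u)$. This is excluded because $\dim\pr_{\{i_0\}}V=2$, i.e.\ $\{i_0\}$ is not tight.

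\textbf{The counting in the freeness step.} The deduction here is invalid. A constructible subset of $\C^2$ other than $\C^2$ can still be Zariski dense, e.g.\ $\C^2\setminus\{0\}$. Your closing assertion is also false: $\{0\}$ and $\C^2\setminus\{0\}$ are two proper constructible sets covering $\C^2$. What you must show is that each bad set is nowhere dense, i.e.\ that $R$ fails on \emph{every} component of the section over the generic point.

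The clean way is as follows. Fix a countable field $k_0$ of definition of $V$ and take $(a,b)$ algebraically independent over $k_0$. The incidence variety $\{(v,a,b): v\in V\cap H_{a,b}\}\cong V\times\C$ is irreducible. So its generic fibre is irreducible over $k_0(a,b)$, and $\mathrm{Aut}(\C/k_0(a,b))$ permutes the components of $V\cap H_{a,b}$ transitively. A relation defined over $k_0$ (all $\PGL_2(\Q)$-relations and modular relations are) that holds on one component then holds on all of them. Hence it holds on the generic fibre, hence on $V$. This treats all countably many relations at once and makes the choice of component irrelevant. It also justifies the uniform formula $\dim\pr_I V_1=\td(\xbar_I,\ybar_I/k_0(a,b))$ behind your broadness computation.

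Alternatively, your sweeping idea becomes a dimension count once run under the correct hypothesis that the bad set is Zariski dense. Then $(V\cap\{R=0\})\times\C$, a proper closed subset of the irreducible $(\dim V+1)$-dimensional incidence variety, would contain $(\dim V-1)$-dimensional fibres over a dense set of $(a,b)$, forcing dimension $\dim V+1$.

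\textbf{Comparison with the paper.} The paper cuts by fully generic hyperplanes and defers the details to Aslanyan's Lemma 4.30. There no tight sets are needed. A generic linear form lowers $\dim\pr_I V$ only when $\dim\pr_I V=\dim V$, in which case $\delta(I)\ge\dim V-n\ge 1$. Also, a coordinate can become constant only if $\dim V=1$.
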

\begin{proof}
    This is a well-known trick used frequently in Existential Closedness problems (see \cite{Kirby-semiab,Zilber-bicolored}). If $\dim V > n$ then we intersect $V$ with generic hyperplanes and reduce its dimension, while preserving broadness and freeness. We refer the reader to \cite[Lemma 4.30]{Aslanyan-adequate-predim} for details.
\end{proof}

The next combinatorial statement allows us to choose and work with a convenient set of coordinates on a broad variety.

\begin{lemma}\label{lem: broad pregeom}
    Let $\cl$ be a pregeometry on a set $M = \{ x_1, \ldots, x_n, y_1, \ldots, y_n \}$ with $\dim M = n$. Assume further that $M$ is broad, i.e. for any $I \seq \{ 1,\ldots, n\}$ we have $\dim \{ \xbar_I, \ybar_I\} \geq |I|$. Then we may reorder the set $M$, by applying the same permutation to $\xbar$ and $\ybar$, such that there is $m \in \{0, \ldots, n\}$ such that $\{ x_1, \ldots, x_m, y_{m+1},\ldots, y_n \}$ is a basis of $M$ and for any $m< k \leq n$ we have $x_{k} \in \cl (x_1,\ldots,x_m, y_{m+1}, \ldots, y_{k-1})$.

    Moreover, if $x_i, y_i \notin \cl(\emptyset)$ for all $i$ then $m\geq 1$.
\end{lemma}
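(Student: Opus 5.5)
The plan is to obtain a \emph{transversal basis} from broadness via Rado's theorem, fix the $x/y$ split by a maximality choice, and then order the $y$-indices greedily using the exchange property.

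\textbf{Step 1: a transversal basis exists.} Consider the $n$ pairs $A_i:=\{x_i,y_i\}$, $1\le i\le n$, in the matroid (pregeometry) $\cl$ restricted to $M$. Rado's theorem says there is an independent transversal of $(A_1,\dots,A_n)$ if and only if
\[ \dim \textstyle\bigcup_{i\in I}A_i \;\ge\; |I| \quad \text{for all } I\seq\{1,\dots,n\}. \]
The right-hand condition is exactly broadness. So there is an independent set $B$ containing exactly one element from each pair. Since $|B|=n=\dim M$, the set $B$ is a basis of $M$.

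\textbf{Step 2: choose the split.} Among all such transversal bases, fix one with the maximal number of $x$'s, say $B=\{x_i:i\in I\}\cup\{y_k:k\in J\}$ with $I\sqcup J=\{1,\dots,n\}$. After permuting indices, $I=\{1,\dots,m\}$. This maximality is the lever for the whole argument. For any $k\in J$, if $x_k\notin\cl(B\setminus\{y_k\})$, then exchanging $y_k$ for $x_k$ yields a transversal basis with more $x$'s, which is impossible. Hence
\[ x_k\in\cl\bigl(B\setminus\{y_k\}\bigr)\quad\text{for every } k\in J. \]

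\textbf{Step 3: order $J$.} We need an enumeration $k_1,\dots,k_{n-m}$ of $J$ with $x_{k_t}\in\cl(x_I,y_{k_1},\dots,y_{k_{t-1}})$. I would build it greedily from the front, at each stage choosing an unused $k$ with $x_k\in\cl(x_I,y_U)$, where $U$ is the set of indices already used.

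Suppose this gets stuck. Then there is a nonempty remaining set $R$ and a used set $U$ with $x_k\notin\cl(x_I,y_U)$ for all $k\in R$. Pass to the contraction by $x_I\cup y_U$, in which $y_R$ is a basis and no $x_k$ ($k\in R$) is a loop. I then want to produce a new transversal basis with more $x$'s, contradicting Step 2. Each $x_k$ can be exchanged for some $y_{\varphi(k)}$ with $\varphi(k)\in R$. If $\varphi(k)=k$ for some $k$ we are done. Otherwise I would use an augmenting-path (matroid intersection) argument: the intersection of the partition matroid of the pairs with $\cl$ has the property that a non-maximum common independent set admits an augmenting path. A cycle of exchanges $k\mapsto\varphi(k)$ should let us swap in $x$'s along the cycle.

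If this is awkward, the fallback is to strengthen the choice in Step 2 to a lexicographic maximality. For example, also apply Rado's theorem inside contractions: broadness of $M$ passes to the relevant contracted pairs, since $\dim(x_{I'},y_{I'}/\,\cdot\,)$ only loses what lies in the contracted part. Then argue by induction on $n-m$, taking the last element of the order to be any $k$ as in Step 2 and recursing on the contraction by $y_k$.

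\textbf{Main obstacle and the final clause.} I expect Step 3, the simultaneous exchange that makes the ordering possible, to be the only real difficulty. Steps 1 and 2 are formal. The ``moreover'' part is immediate. If $m=0$, the first index $k_1$ satisfies $x_{k_1}\in\cl(\emptyset)$, contradicting the assumption that no $x_i$ is in $\cl(\emptyset)$. Hence $m\ge 1$.
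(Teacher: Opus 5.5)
Your Steps 1 and 2 and the final clause are correct. Step 3, however, is the heart of the lemma, and it is not proved: the mechanism you suggest does not work as stated.

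First, Step 2 already rules out $\varphi(k)=k$ in the stuck situation. Indeed, $x_k$ can replace $y_k$ in the basis $y_R$ of $N:=M/(x_I\cup y_U)$ exactly when $(B\setminus\{y_k\})\cup\{x_k\}$ is a basis of $M$. So the cycle case is the whole difficulty, and there a simultaneous swap along an arbitrary cycle of exchanges need not give a basis. For instance, in $\Q^3$ with linear span, take $y_i=e_i$, $x_1=e_2+e_3$, $x_2=e_1+e_3$, $x_3=e_1-e_2$. Then:
\begin{itemize}
\item the set is broad with $\dim M=3$, and $B=\{y_1,y_2,y_3\}$ satisfies your Step 2 conclusion;
\item the greedy step is stuck at $U=\emptyset$, and each $x_k$ can replace either $y_{k'}$ with $k'\neq k$;
\item with $\varphi(1)=2,\varphi(2)=3,\varphi(3)=1$ the swap produces $\{x_1,x_2,x_3\}$, which is dependent since $x_2-x_1=x_3$;
\item the $2$-cycle $1\leftrightarrow 2$, by contrast, gives the basis $\{x_1,x_2,y_3\}$.
\end{itemize}

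The principle you quote, that a non-maximum common independent set has an augmenting path, does not apply. Your $B$ already has the maximum size $n$; what is deficient is the number of $x$'s. The fallback does not close the gap either. If $k$ is placed last, then $y_k$ may not be used for the earlier indices, so recursing in the contraction by $y_k$ only gives the weaker memberships $x_{k_t}\in\cl(x_I,y_k,y_{k_1},\dots,y_{k_{t-1}})$. Moreover, broadness is not inherited by contractions in general.

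Your framework can be repaired as follows.
\begin{itemize}
\item In $N$, put an arc $k\to k'$ on $R$ whenever $(y_R\setminus\{y_{k'}\})\cup\{x_k\}$ is a basis of $N$.
\item Since no $x_k$ is a loop of $N$, every vertex has an outgoing arc, so there is a directed cycle; take a shortest one, $C$.
\item A shortest directed cycle has no chords, so the exchange graph of $y_R$ restricted to $x_C\cup y_C$ is a single perfect matching.
\item The standard lemma from matroid intersection says: if the exchange graph of an independent set $A$ restricted to $A\triangle A'$, with $|A'|=|A|$, has a unique perfect matching, then $A'$ is independent. Hence $y_{R\setminus C}\cup x_C$ is a basis of $N$.
\item Then $x_I\cup y_U\cup y_{R\setminus C}\cup x_C$ is a transversal basis of $M$ with more $x$'s, contradicting Step 2.
\end{itemize}

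So repaired, your route is a genuine alternative to the paper's. It even shows that any transversal basis with the maximal number of $x$'s can be ordered greedily, at the cost of Rado's theorem and this exchange lemma. The paper avoids matroid-intersection tools by induction on $n$. If some non-empty proper $I$ is tight, i.e.\ $\dim\{\xbar_I,\ybar_I\}=|I|$, it treats $I$ first and then the complement relative to $\cl(\xbar_I,\ybar_I)$, which stays broad precisely because $I$ is tight. Otherwise it contracts by $x_1$, and the strict inequalities keep the remaining pairs broad.
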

\begin{proof}\footnote{I first learnt about this statement (in the context of Exponential Closedness) from J. Kirby. This particular proof is based on an argument suggested by ChatGPT 6 Astra.}
We argue by induction on $n$, the $n=1$ case being trivial.

Let $n>1$. Consider two cases.

\textbf{Case 1.} There is a non-empty $I \subsetneq \{1,\ldots,n\}$ such that $\dim \{ \xbar_I, \ybar_I\} = |I|$.

Reordering $M$ we may assume that $I = \{ 1, \ldots, l \}$. Write $J:= \{ l+1, \ldots, n \}$. Since $l<n$, by the induction hypothesis there is $m_1 \in I \cup \{ 0 \}$ such that the set $\{ x_1, \ldots, x_{m_1}, y_{m_1+1},\ldots, y_l \}$ is a basis of $M_1:=\{ \xbar_I, \ybar_I \}$ and for any $m_1< k \leq l$ we have (possibly after reordering again) $x_{k} \in \cl (x_1,\ldots,x_{m_1}, y_{m_1+1}, \ldots, y_{k-1})$. 

Consider the set $M_2:= \{ \xbar_J, \ybar_J \}$ with the relative closure $\cl_{M_1} (A) = \cl(M_1 \cup A) \cap M_2$. We claim that $M_2$ is broad with respect to $\cl_{M_1}$. Indeed, for any $J' \seq J$ we have 
\[ \dim_{M_1} \{ \xbar_{J'}, \ybar_{J'} \} = \dim \{ \xbar_{I \cup J'}, \ybar_{I \cup J'} \} - \dim M_1 \geq |I \cup J'| - l = |J'|. \]

By the induction hypothesis again, after reordering $M_2$, there is $m_2 \in J \cup \{ l \}$ such that $\{ x_{l+1}, \ldots, x_{m_2}, y_{m_2+1},\ldots, y_n \}$ is a basis of $M_2$ and for any $m_2< k \leq n$ we have $x_{k} \in \cl_{M_1} (x_{l+1},\ldots,x_{m_2}, y_{m_2+1}, \ldots, y_{k-1})$. Then the tuple 
\[ (x_1, \ldots, x_{m_1}, x_{l+1}, \ldots, x_{m_2}, y_{m_1+1}, \ldots, y_l, y_{m_2+1}, \ldots, y_n) \]
is the desired basis for $M$ (after reordering once more).

\textbf{Case 2.} For all non-empty $I \subsetneq \{1,\ldots,n\}$ we have $\dim \{ \xbar_I, \ybar_I\} > |I|$.

Consider the set $M_2:= \{ x_2, \ldots, x_n, y_2, \ldots, y_n \}$ with the relative closure $\cl_{x_1}$. For any $J' \seq J:= \{ 2, \ldots, n \}$ we have $\dim_{x_1} M_2 = \dim (M_2 \cup \{ x_1 \} ) -1  = n-1$ and
\[ \dim_{x_1} \{ \xbar_{J'}, \ybar_{J'} \} = \dim \{ x_1, \xbar_{J'},\ybar_{J'} \} - 1 \geq |J'|. \]
This shows that $(M_2, \cl_{x_1})$ is broad. Hence, by the induction hypothesis, after reordering $M_2$, there is $m_2 \in \{ 1, \ldots, n \}$ such that $\{ x_{2}, \ldots, x_{m_2}, y_{m_2+1},\ldots, y_n \}$ is a basis of $M_2$ and for any $m_2< k \leq n$ we have $x_{k} \in \cl_{x_1} (x_{2},\ldots,x_{m_2}, y_{m_2+1}, \ldots, y_{k-1})$. Then the tuple 
\[ (x_1, \ldots, x_{m_2}, y_{m_2+1}, \ldots, y_n) \]
is the desired basis for $M$.

The ``moreover'' clause follows immediately, as otherwise we would have $x_1 \in \cl(\emptyset)$.
\end{proof}

\subsection{Algebraic maps}\label{subsec: alg func}
 Consider the function field $\C(V)$ of $V$. The relative algebraic closure is a pregeometry. We apply \Cref{lem: broad pregeom} to the restriction of this pregeometry to the set of coordinate functions $\xbar, \ybar$. Broadness of $V$ and the absence of constant coordinates ensure the assumptions of the lemma are satisfied, so we get a set of algebraically independent coordinates $x_1, \ldots, x_m, y_{m+1}, \ldots, y_n$ on $V$ that form a transcendence basis of $\C(V)$. In other words, the projection of $V$ to these coordinates is dominant.

We can apply \cite[Prop. 3.5 and Cor. 3.6]{Aslanyan-Kirby-Mantova} to express the other coordinates as algebraic functions of $x_1, \ldots, x_m, y_{m+1}, \ldots, y_n$ on $V$. More precisely, there is a proper Zariski closed subset $E$ of $\C^n = \C^m \times \C^{n-m}$ and a real semi-algebraic set $R \seq \C^n$ of real dimension $<2n$ containing $E$, such that $S:= \C^n \setminus R$ is open and simply connected and there are holomorphic functions $P_k, m<k\leq n,$ and $Q_l, 1 \leq l \leq m,$ defined on $S$, such that the set defined by the equations
 \begin{equation}\label{eqn: algebraic}
\begin{aligned}
    & x_{k} = P_k(x_1,\ldots,x_m, y_{m+1}, \ldots, y_{k-1}),~ k= m+1, \ldots, n;\\
    & y_l = Q_l(x_1,\ldots,x_m, y_{m+1}, \ldots, y_{n}),~ l=1,\ldots,m.
\end{aligned}
\end{equation}
is contained in $V$. Note that since \Cref{lem: broad pregeom} guarantees that $x_{k}$ is algebraic over $(x_1,\ldots,x_m, y_{m+1}, \ldots, y_{k-1})$ in $\C(V)$, the algebraic function $P_k$ does not depend on $y_k, \ldots, y_n$.

Moreover, the semi-algebraic set $R$ can be chosen so that $S$ contains a real point (hence also a real box, for $S$ is open). Indeed, since $E$ is a proper Zariski closed subset of $\C^n$, and $\R^n$ is Zariski dense in $\C^n$, we can choose a real point $\vbar \notin E$ and set $R:= \{ \vbar + (\ebar-\vbar)t: \ebar \in E, t \geq 1 \}  \supseteq E$. Then $\vbar \in S \cap \R^n$. 

Note that below we only need $S$ to contain points where the first $m$ coordinates are real. Also, while the exceptional set -- the algebraic part of the branch cut $R$ --  is fixed, we are free to choose the semi-algebraic part, and we will choose it appropriately to ensure certain functions are well defined.

\begin{remark}
    One may try to find a basis of coordinates $x_1, \ldots, x_m, y_{m+1}, \ldots, y_n$ such that $m= \dim \pr_{\xbar} V$ and $x_k \in \C(x_1, \ldots, x_m)^{\alg}$ for all $m<k \leq n$. This would then make the equations \eqref{eqn: algebraic}, hence also the analytic equations we consider below, significantly simpler to deal with. That is indeed possible in the exponential setting, but not here. For instance, if $V \seq \C^3 \times \C^3$ is given by the equations
    \[ x_2=x_1^2, y_2 = x_1+x_3, y_1= x_2+x_3, \]
    then the required basis must contain $x_3$ and one of $x_1, x_2$, so it must be either $\{ x_1, x_3, y_2\}$ or $\{ x_2, x_3, y_1\}$ but neither of these sets is independent. A basis given by \Cref{lem: broad pregeom} is $\{ x_1, y_2, y_3\}$.
\end{remark}

\subsection{Analytic maps}\label{subsec: construct Zk Fk Gk}

The set defined by \eqref{eqn: algebraic} is not Zariski closed in general, but it is contained in $V$. So it suffices to show that this set intersects the graph of $j$.

Thus we want to solve the following system of equations (switching to the variables $z_k$ indicates that we now deal with analytic, rather than algebraic, equations):
\begin{equation}\label{eqns after reduction}
\begin{aligned}
     & z_{k} = P_k(z_1,\ldots,z_m, j(z_{m+1}), \ldots, j(z_{k-1})),~ k= m+1, \ldots, n;\\
    & j(z_l) = Q_l(z_1,\ldots,z_m, j(z_{m+1}), \ldots, j(z_{n})),~ l=1,\ldots,m.
\end{aligned}
\end{equation}

As in \S\ref{sec: special cases and examples}, we will approximate this set of equations by a simpler one which has a solution by construction and apply Rouch\'e's theorem to conclude that \eqref{eqns after reduction} has a solution.

Let $\zbar:=(z_1, \ldots, z_m)$. The first group of equations in \eqref{eqns after reduction} (the first line) prompts us to define functions $Z_k(\zbar), m<k\leq n,$ recursively by \[ Z_{k}(\zbar) := P_{k}(\zbar, j(Z_{m+1}(\zbar)), \ldots, j(Z_{k-1}(\zbar))). \] However, we need to be careful about the domain of definition of $Z_k$: if $Z_l(\zbar)$ is well defined on some domain but takes some real values there, then $j(Z_l)$ is not defined. So at each step of the recursive definition, we must shrink the domain to ensure that $Z_k$ avoids $\R$ on that domain. Then, our goal is to show that on some such domain the following system of equations has a solution:
\begin{equation}\label{eqn: j(zk) = Q(z, j(Z)...)}
    j(z_l) = Q_l(\zbar, j(Z_{m+1}(\zbar)), \ldots, j(Z_{n}(\zbar))),~ l=1, \ldots, m.
\end{equation}

On the other hand, when approximating \eqref{eqn: j(zk) = Q(z, j(Z)...)} by another system, we will let $\zbar$ approach a real point. So we need to keep track of the functions $Z_k$ that are defined at real points and we should distinguish whether $Z_k$ maps real points to real or non-real values. When real inputs are mapped to real values, we introduce a new variable for $j(Z_k)$ and later assign a generic value to this new variable which we can approximate thanks to \Cref{thm: density of analytic orbits general version}. Thus, we will recursively define a closely related but different collection of functions some of which will agree with some of the $Z_k$'s.

Recall that $\zbar:=(z_1, \ldots, z_m)$ and write $\wbar:=(w_{m+1}, \ldots, w_n)$. For $m<k\leq n$ we recursively define functions $F_k(\zbar, \wbar)$ and $G_k(\zbar, \wbar)$ on some polydisc $\Omega_k \seq \C^m \times \C^{n-m}$ which is also constructed recursively. We will always guarantee that $\pr_{\zbar} \Omega_k \cap \R^m \neq \emptyset$.

First, let $\Omega_m := \C^n = \C^m \times \C^{n-m}$. Let $k>m$. Write $S_k$ for the projection of $S$ to the coordinates $(x_1, \ldots, x_m, y_{m+1}, \ldots, y_k)$.

Consider the set \[ \Omega_k':=\{ (\zbar, \wbar) \in \Omega_{k-1}: (\zbar, G_{m+1}(\zbar, \wbar), \ldots, G_{k-1}(\zbar, \wbar)) \in S_{k-1} \}. \]

We prove below (\Cref{lem: F_k indep and G_k indep}) that $\zbar, G_{m+1}(\zbar, \wbar), \ldots, G_{k-1}(\zbar, \wbar)$ are algebraically independent functions over $\C$, therefore the image of $\Omega_{k-1} \cap (\R^m \times \C^{n-m})$ (and any open subset thereof) under the map $(\zbar, \wbar) \mapsto(\zbar, G_{m+1}(\zbar, \wbar), \ldots, G_{k-1}(\zbar, \wbar))$ cannot be trapped in the exceptional set from \S\ref{subsec: alg func}. So we can choose the branch cut $R$ appropriately to ensure that $\pr_{\zbar}\Omega_k'$ contains an open real box in $\R^m$. Let $\Omega_k \seq \Omega_k'$ be a polydisc such that $\pr_{\zbar} \Omega_k \cap \R^m \neq \emptyset$, and define 
 \[ F_k(\zbar, \wbar) = P_k(\zbar, G_{m+1}(\zbar, \wbar), \ldots, G_{k-1}(\zbar, \wbar))  \text{ for } (\zbar, \wbar) \in \Omega_k.\]

 To define $G_k$, we consider two cases.

 \textbf{Case 1. }  $F_k(\Omega_k \cap (\R^m \times \C^{n-m})) \not\subseteq \R$.

 In this case we shrink $\Omega_k$ so that $\pr_{\zbar} \Omega_k \cap \R^m \neq \emptyset$ and $F_k(\Omega_k) \seq \C \setminus \R$, and define
 \[ G_k(\zbar, \wbar) := j(F_k(\zbar, \wbar))  \text{ for } (\zbar, \wbar) \in \Omega_k.\]

 \textbf{Case 2. } $F_k(\Omega_k \cap (\R^m \times \C^{n-m})) \seq \R$. 

 In this case we define
 \[ G_k(\zbar, \wbar) := w_k  \text{ for } (\zbar, \wbar) \in \Omega_k.\]

 We let $\Omega \seq  \Omega_n \seq \C^n$ be a polydisc with $\pr_{\zbar} \Omega \cap \R^m \neq \emptyset$ where \[ Q_l(\zbar, G_{m+1}(\zbar, \wbar), \ldots, G_{n}(\zbar, \wbar)), 1 \leq l \leq m, \] is well defined and holomorphic. Observe that all $F_k, G_k$ are holomorphic on $\Omega_n$, hence also on $\Omega$. 
 
Finally, let $T\seq \{ m+1, \ldots, n\}$ be the set of indices that fall under Case 2 above. In other words, \[ T =\{ k: G_k(\zbar, \wbar) = w_k \}. \]

\begin{remark}\label{rem: F_k does not depend}
    Case 2 above can happen only if $F_k$ does not depend on $\wbar$, as the image of a non-constant holomorphic function cannot be contained in $\R$. In fact, when $k \in T$ we have $F_k(\zbar, \wbar) = Z_k(\zbar)$.

    In general, $F_k(\zbar, \wbar)$ does not depend on $w_i$ with $i \notin T$ or $i\geq k$.
\end{remark}

\subsection{Independence of the functions}

\begin{lemma}\label{lem: F_k indep and G_k indep}
    The functions $z_1, \ldots, z_m, F_{m+1}(\zbar, \wbar), \ldots, F_{n}(\zbar, \wbar)$ are non-constant and pairwise $\PGL_2(\Q)$-independent, and the functions $G_{m+1}(\zbar, \wbar), \ldots, G_{n}(\zbar, \wbar)$ are algebraically independent over $\C(\zbar)$.
\end{lemma}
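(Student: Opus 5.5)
We argue by induction on $k$, proving simultaneously for each $k$ with $m\le k\le n$ the two claims for $z_1,\dots,z_m,F_{m+1},\dots,F_k$ and $G_{m+1},\dots,G_k$. The induction is needed anyway, because the recursive construction of $\Omega_{k+1}$ already uses algebraic independence of $\zbar,G_{m+1},\dots,G_k$. The base case $k=m$ is trivial. The key tool is the functional Ax--Schanuel theorem (\Cref{Ax-Schanuel functional}), applied on a suitable domain, together with freeness of $V$. Specifically, $V$ has no constant coordinates, no $\PGL_2(\Q)$-relations on $\xbar$, and no modular relations on $\ybar$.

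\textbf{Algebraic independence.} The first step is to show that $\zbar,G_{m+1},\dots,G_k$ are algebraically independent, given the statement for $k-1$. Split the indices $m<i\le k$ into $T$ and its complement. For $i\in T$ we have $G_i=w_i$, and by \Cref{rem: F_k does not depend} no $F_l$ depends on these $w_i$ except through the $G$'s. Hence, after fixing generic values, the $w_i$ with $i\in T$ are free variables, independent of everything defined without them. So it suffices to treat the indices outside $T$, where $G_i=j(F_i)$. Let $I=\{i\le k: i\notin T\}$ and suppose $\td_{\C(\zbar,\wbar_T)}\C(\zbar,\wbar_T,(j(F_i))_{i\in I})<|I|$. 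The $F_i$ for $i\in I$ are algebraic over $\C(\zbar,G_{m+1},\dots,G_{i-1})$ via $P_i$. Apply Ax--Schanuel to the tuple of functions $z_1,\dots,z_m,(F_i)_{i\in I}$ of the $|T|+m$ variables $(\zbar,\wbar_T)$, restricted to a small open set where all $z_l$ are non-real. Its transcendence degree is at most $(m+|T|)+$ the degree of $(j(F_i))_{i\in I}$ over $\C(\zbar,\wbar_T)$, plus $m$ for the $j(z_l)$. Compare this with the lower bound $m+|I|+\rk$, where the rank is at least $m+|T|$ because $\zbar$ and $\wbar_T$ enter with full-rank Jacobian. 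The comparison yields a contradiction, except in the degenerate situation where the hypotheses of Ax--Schanuel fail, that is, a $\PGL_2(\Q)$-relation or a constant among $\zbar,(F_i)_{i\in I}$. That situation is excluded by the first claim, applied inductively for indices below $k$. One must also check the new $F_k$, which is exactly the next step.

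\textbf{Non-constancy and $\PGL_2(\Q)$-independence.} Next we show that $F_k$ is non-constant and not $\PGL_2(\Q)$-related to any $z_l$ or earlier $F_i$. Suppose instead $F_k=gF_i$, $F_k=gz_l$, or $F_k\equiv c$. The functions $\zbar,G_{m+1},\dots,G_{k-1}$ are algebraically independent by the previous step, and they form a generic point of the projection of $V$ to the basis coordinates. Substituting them into the algebraic relation $x_k=P_k(\ldots)$ therefore realises a generic point of $\pr_{(x_1,\dots,x_m,y_{m+1},\dots,y_{k-1},x_k)}V$. Any identity such as $F_k=gF_i$ then lifts to the identity $x_k=gx_i$ holding on $V$, because this is an algebraic relation among functions evaluated at a generic point. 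Similarly $F_k\equiv c$ lifts to $x_k$ being constant on $V$. Both contradict freeness.

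\textbf{Main obstacle.} The hard part is that $G_i=j(F_i)$ for $i\notin T$ is only a genuine composition on the region where $F_i$ avoids $\R$, and the $w_i$ for $i\in T$ are substitutes for $j(Z_i)$. The genericity argument above therefore has to be run in a setting where some "$y$-coordinates" are free variables. For modular relations on $\ybar$ among the $j(F_i)$, a modular relation $\Phi_N(G_i,G_{i'})=0$ would lift to $\Phi_N(y_i,y_{i'})=0$ on $V$ by the same genericity argument, again contradicting freeness. I would handle all of this uniformly by treating $(\zbar,\wbar_T,(F_i)_{i\notin T},(j(F_i))_{i\notin T})$ as the analytic data and verifying the rank and transcendence bookkeeping carefully in Ax--Schanuel, including the modular-relation case.
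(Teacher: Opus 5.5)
Your argument for non-constancy and $\PGL_2(\Q)$-independence of $F_k$ is fine and matches the paper: you lift $F_k=gF_i$ to $x_k=gx_i$ on $V$ using the algebraic independence of $\zbar,G_{m+1},\dots,G_{k-1}$.

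The gap is in the Ax--Schanuel step for algebraic independence. You apply \Cref{Ax-Schanuel functional} to the tuple $(\zbar,(F_i)_{i\in I})$, viewed as functions of $(\zbar,\wbar_T)$. You then assert that the Jacobian rank is at least $m+|T|$ ``because $\zbar$ and $\wbar_T$ enter with full rank''. But the $w_t$ are not members of the tuple, so the rank is only $m+\rk(\partial F_I/\partial\wbar_T)$, which can be strictly smaller than $m+|T|$. In the example of \S\ref{subsec: less simple example} at stage $k=4$, your tuple is $(z_1,z_2,i(z_1+z_2))$, of rank $2<3$. Writing $d$ for the transcendence degree of $(j(F_i))_{i\in I}$ over $\C(\zbar,\wbar_T)$, your two bounds then give only $d\ge 0$. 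Your upper bound charges $|T|$ for $\wbar_T$, although these functions do not lie in $\C(\zbar,F_I,j(\zbar),j(F_I))$, so the comparison does not close. Specialising $\wbar_T$ to generic constants does not rescue this. If some $F_i=gw_t$, then after specialisation $F_i$ and $j(F_i)$ become constant, and the independence you want genuinely fails.

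The repair, which is the paper's argument, is to put the $w$'s themselves into the Ax--Schanuel tuple, so that the rank equals the number of variables. Ax--Schanuel then needs that enlarged tuple to be $\PGL_2(\Q)$-independent. Your first claim says nothing about relations $F_i\in\PGL_2(\Q)w_t$, and these do occur: for instance $F_i=w_t$ whenever $x_i=y_t$ holds on $V$ with $t\in T$. The paper handles this as follows.
\begin{itemize}
\item It takes $\Lambda_k=\{z_1,\dots,z_m,w_{m+1},\dots,w_k\}\cup\{F_l: F_l\notin\PGL_2(\Q)\{w_{m+1},\dots,w_k\}\}$ and $\Xi_k=\{z_1,\dots,z_m,w_{m+1},\dots,w_k\}\cup j(\Lambda_k)$.
\item Ax--Schanuel together with $F_l\in\C(\Xi_k)^{\alg}$ gives $\td_{\C}\C(\Xi_k)\ge|\Lambda_k|+k=|\Xi_k|$, so $\Xi_k$ is algebraically independent.
\item Each $G_l$ is then matched with a distinct element of $\Xi_k$. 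This is $w_l$ if $l\in T$, and $j(F_l)$ if $F_l\in\Lambda_k$. If $F_l\in\PGL_2(\Q)w_i$, it is the auxiliary function $j(w_i)$, via a modular relation.
\item Distinctness in the last case comes from the pairwise $\PGL_2(\Q)$-independence of the $F$'s.
\end{itemize}
This exclusion of the $F_l$ in $\PGL_2(\Q)$-orbits of the $w$'s, together with the bookkeeping through the $j(w_i)$, is the missing idea in your proposal.
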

\begin{proof}
We prove by induction on $k$ that $\zbar, F_{m+1}, \ldots, F_{k}$ are $\PGL_2(\Q)$-independent and non-constant and that $\zbar, G_{m+1}, \ldots, G_{k}$ are algebraically independent over $\C$.

The base case $k=m$ holds because $z_1, \ldots, z_m$ are algebraically independent. For $k>m$ assume $\zbar, F_{m+1}, \ldots, F_{k-1}$ are $\PGL_2(\Q)$-independent and $\zbar, G_{m+1}, \ldots, G_{k-1}$ are algebraically independent.

    It is then easy to verify that $F_k$ is $\PGL_2(\Q)$-independent from $\zbar, F_{m+1}, \ldots, F_{k-1}$. To this end, observe that since $\zbar, G_{m+1}(\zbar, \wbar), \ldots, G_{k-1}(\zbar, \wbar)$ are algebraically independent over $\C$ (by the induction hypothesis), if $F_k=gF_l$ for some $l$ with $g \in \PGL_2(\Q)$ then $P_k = g P_l$ as algebraic functions. This then implies that $x_k = gx_l$ on $V$ which contradicts the freeness of $V$. Similarly, it is not possible to have $F_k = g z_l$ with $g \in \PGL_2(\Q)$, or $F_k$ constant.

    We now verify that  $\zbar, G_{m+1}, \ldots, G_{k}$ are algebraically independent over $\C$. Consider the sets
 \[ \Lambda_k:= \{ z_1, \ldots, z_m, w_{m+1}, \ldots, w_k \} \cup \{ F_l: m< l \leq k, F_l \notin \PGL_2(\Q)\{ w_{m+1}, \ldots, w_k \} \}\]
 and \[ \Xi_k:=\{  z_1, \ldots, z_m, w_{m+1}, \ldots, w_k\} \cup j(\Lambda_k).\]

 Note that here we temporarily shrink the domain $\Omega$ to ensure $j(\Lambda_k)$ is well defined; this does not affect algebraic relations between the functions.

We show that each $G_l$ is interalgebraic (via a modular relation) with a unique element $\mu_l \in \{ w_{m+1}, \ldots, w_k \} \cup j(\Lambda_k)$ and $\mu_{m+1}, \ldots, \mu_k$ are all distinct. 
  
  First, observe that for all $m<l \leq k$ we have $G_l \in \C(\Xi_{k})^{\alg}$. Indeed, if $l \in T$ then $G_l = w_l \in \Xi_k$ and if $l \notin T$ then $G_l = j(F_l)$ which either belongs to $j(\Lambda_k)$ when $F_l \notin \PGL_2(\Q)\{ w_{m+1}, \ldots, w_k \}$ or is in a modular relation with some $j(w_i) \in \Xi_k$ otherwise. In the last case, note that for $l \neq l'$ it is not possible for $G_l$ and $G_{l'}$ to be in a modular relation with the same $j(w_i)$ as otherwise we would have $F_l, F_{l'} \in \PGL_2(\Q)w_i$ but we know that $F_l$ and $F_{l'}$ are in distinct $\PGL_2(\Q)$-orbits.

  Since $\Lambda_k$ is $\PGL_2(\Q)$-independent and contains no constant function, Ax--Schanuel (\Cref{Ax-Schanuel functional}) gives
 \[ \td_{\C} \C(\Lambda_k, j(\Lambda_k)) \geq |\Lambda_k|+k.  \]

 Taking into account that $F_l \in \C(\zbar, G_{m+1}, \ldots, G_{l-1})^{\alg} \seq \C(\Xi_k)^{\alg}$, we conclude that
 \[ \td_{\C} \C(\Xi_k) \geq | \Lambda_k| + k = |\Xi_k|.\]
  Thus $\Xi_k$ is algebraically independent. This then implies that $\zbar, G_{m+1}, \ldots, G_{k}$ are algebraically independent over $\C$.
\end{proof}

\begin{remark}
    Strictly speaking, one should organise the recursive construction of the functions $F_k, G_k$ and their independence as a simultaneous induction. However, separating the independence proof from the actual construction of the functions makes it more readable.
\end{remark}

\subsection{The limit equations}

Pick a real tuple $\rbar:=(r_1, \ldots, r_m) \in \R^m$ and a complex tuple $\thebar :=(\theta_{m+1}, \ldots, \theta_n )$ such that $(\rbar, \thebar) \in \Omega$.

For $m<k\leq n$ we set $\tau_k := F_k(\rbar, \thebar)$ and $\xi_k := G_k(\rbar, \bm{\theta})$, and $\xibar := (\xi_{m+1},\dots, \xi_n)$. Then $\tau_k = P_k(\rbar, \xibar)$. Note that whenever $k \in T$, we have $\tau_k \in \R$ and $\xi_k = \theta_k$. When $k \notin T$, we have $\tau_k \notin \R$ and $\xi_k=j(\tau_k)$. 

Now pick $\tau_l \in \h$ for $1\leq l \leq m$ such that
\begin{equation}\label{eqn: j(tau k ) = Qk (r, xi)}
    j(\tau_l) = Q_l(\rbar, \xibar).
\end{equation}
This is well defined since $(\rbar, \thebar) \in \Omega$ where $Q_l(\zbar, G_{m+1}(\zbar, \wbar), \ldots, G_{n}(\zbar, \wbar))$ is holomorphic. 

As we pointed out above, when $k \in T$ the function $F_k$ does not depend on $\wbar$, so we write $F_k(\zbar)$. Then the functions $(F_k(\zbar))_{k \in T}$ satisfy the conditions of \Cref{thm: density of analytic orbits general version}, so let $\sigma: T \to \{ 1, \ldots, m \}$ be the variable selecting map given by that theorem. Now, for $k \in T$ let $h_k \in \PGL_2(\R)$ such that 
\begin{equation}\label{eqn: j(hk tau sigma k) = theta k}
    j(h_k \tau_{\sigma(k)}) = \theta_k.
\end{equation}

For $k \in T$ define $f_k(z_{\sigma(k)}):= j(h_k z_{\sigma(k)})$ and let $\fbar(\zbar) := (f_k(z_{\sigma(k)}))_{k\in T}$.

Now consider the functions
\begin{equation*}
    H_l (\zbar) := Q_l(\rbar, G_{m+1}(\rbar, \fbar(\zbar)), \ldots, G_n(\rbar, \fbar(\zbar))),~ 1\leq l \leq m.
\end{equation*}
Note that while formally $G_k(\zbar, \wbar)$ is a function of $n$ variables, it does not depend on $w_i$ with $i \notin T$, so we can think of it as a function of $\zbar$ and $\wbar_T:=(w_i)_{i\in T}$, hence the expression $G_k(\rbar, \fbar(\zbar))$ makes sense.

By \eqref{eqn: j(tau k ) = Qk (r, xi)} and \eqref{eqn: j(hk tau sigma k) = theta k},  $H_l$ is well defined and holomorphic around $\taubar:= (\tau_1, \ldots, \tau_m)$. Moreover, $\taubar$ is a solution to the system of equations
\begin{equation}\label{eqn: limit eqns}
    j(z_l) = H_l(\zbar),~ 1\leq l \leq m.
\end{equation}
 We will use \Cref{thm: density of analytic orbits general version} to show that, after a change of variables, the system \eqref{eqns after reduction} approximates \eqref{eqn: limit eqns}, and deduce by Rouch\'e's theorem that the former must have a solution because the latter does. In order to apply Rouch\'e's theorem we need to check first that $\taubar$ is an isolated solution of $\eqref{eqn: limit eqns}$.

 \begin{lemma}\label{lem: tau isolated sol}
    For a generic (outside a meagre set) choice of $(\rbar, \thebar) \in \Omega \cap (\R^m \times \C^{n-m})$, the point $\taubar$ is an isolated solution of \eqref{eqn: limit eqns}.
\end{lemma}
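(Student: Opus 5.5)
We argue by contradiction: if $\taubar$ is not isolated, the solution set of \eqref{eqn: limit eqns} near $\taubar$ contains a positive-dimensional analytic germ. We then convert this into an atypical intersection of an algebraic variety with $\gr(j)$, and apply the Ax--Schanuel theorem (\Cref{Ax-Schanuel complex}).

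\textbf{Step 1: the algebraic variety.} Fix $(\rbar,\thebar)$, write $\xibar$ as above, and put $t:=|T|$. Consider the points
\[ \bigl(z_1,\ldots,z_m,(h_k z_{\sigma(k)})_{k\in T},\, j(z_1),\ldots,j(z_m),(j(h_kz_{\sigma(k)}))_{k\in T}\bigr) \in \C^{m+t}\times\C^{m+t}. \]
By construction, a solution $\zbar$ of \eqref{eqn: limit eqns} near $\taubar$ gives such a point lying in $\gr(j)\cap W$. Here $W\seq\C^{m+t}\times\C^{m+t}$ is the Zariski closure of the set given by
\begin{itemize}
\item the $m$ equations $y_l=Q_l(\rbar,G_{m+1}(\rbar,\ybar_T),\ldots,G_n(\rbar,\ybar_T))$;
\item the $t$ relations $u_k=h_k x_{\sigma(k)}$ for $k\in T$.
\end{itemize}
The first group of equations is algebraic in $\ybar_T$ after also adjoining the auxiliary coordinates $\tau_k'=F_k(\rbar,\ybar_T)$ and $j(\tau_k')$ for $k\notin T$. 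To handle this cleanly, we enlarge the ambient space by the coordinates $(F_k,G_k)_{k\notin T}$ evaluated at $(\rbar,\ybar_T)$, so that everything becomes algebraic, namely given by the $P_k$, $Q_l$ and the M\"obius maps $h_k$. This yields a variety $W$ in $\C^{n}\times\C^n$. Counting dimensions gives $\dim W\le n$: the free parameters are $\xbar_{1..m}$, with the $\ybar_T$ constrained by $m$ equations plus the Möbius links, and the remaining coordinates are algebraic over these.

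\textbf{Step 2: Ax--Schanuel.} A positive-dimensional germ of solutions gives an analytic component $U$ of $W\cap\gr(j)$ with $\dim U>0\ge\dim W-n$. By \Cref{Ax-Schanuel complex}, some coordinate is constant on $U$, or a $\PGL_2(\Q)$-relation, or a modular relation, holds on $U$. We must rule these out for generic $(\rbar,\thebar)$. The possible degeneracies are of the following types:
\begin{enumerate}
\item[(a)] $h_k\in\PGL_2(\Q)$, or $h_k h_{k'}^{-1}\in\PGL_2(\Q)$ with $\sigma(k)=\sigma(k')$;
\item[(b)] the constant coordinates $\tau_k,\xi_k$ for $k\notin T$ satisfying special relations (constant coordinates such as $r_l$ are harmless once we only track the $\zbar$-part);
\item[(c)] a modular or $\PGL_2(\Q)$-relation linking a coordinate $j(z_l)$, or $j(h_kz_{\sigma(k)})$, to a value $\xi_k$ or to another coordinate.
\end{enumerate}
Each such degeneracy forces an algebraic identity on $\taubar$ or $\thebar$, for instance $\Phi_N(Q_l(\rbar,\xibar),\theta_k)=0$ or $\Phi_N(\theta_k,\theta_{k'})=0$. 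This is exactly as in the $n=2$ sketch. For each fixed $N$ and each fixed index pattern, the resulting condition cuts out a closed analytic subset of $\Omega\cap(\R^m\times\C^{n-m})$. This subset is proper, because otherwise the identity would hold identically in $(\zbar,\wbar)$. By the algebraic independence of $\zbar,G_{m+1},\ldots,G_n$ over $\C$ (\Cref{lem: F_k indep and G_k indep}), an identity holding identically would pull back to a modular relation $\Phi_N(y_i,y_k)=0$ on $V$, or to a constant coordinate, contradicting freeness. Each such set is therefore nowhere dense, and there are countably many of them, so they form a meagre set. The choice of $h_k$ is also free up to the stabiliser of the fibre $j^{-1}(\theta_k)$, so (a) can be avoided as well.

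\textbf{Main obstacle.} The delicate point is making Step 1 precise. We need $W$ to genuinely have dimension $\le n$, and we must show that every relation produced by Ax--Schanuel on $U$ translates into a non-trivial identity among $\zbar$ and the $G_k$. The translation has to be made through the explicit formulas for $P_k,Q_l$ and the branch choices on $S$. My first attempt would be to carry out the dimension count coordinate by coordinate, following the recursive construction of the $F_k,G_k$: each $k\notin T$ adds one pair $(x_k,y_k)$ and costs one equation, each $k\in T$ adds $w_k=y_k$ tied by a Möbius link, and the $Q_l$ remove $m$ more degrees of freedom. I would then invoke \Cref{lem: F_k indep and G_k indep} to lift any surviving relation back to $V$.
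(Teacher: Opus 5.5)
There is a genuine gap in Step 2. You propose to rule out, by a generic choice of $(\rbar,\thebar)$, every alternative in the conclusion of \Cref{Ax-Schanuel complex}. The alternative ``some coordinate is constant on $U$'' cannot be ruled out this way, because it is typically forced.

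For instance, if $m+1\notin T$ then $x_{m+1}=P_{m+1}(\rbar)$ is constant on all of $W$. In \S\ref{subsec: less simple example}, $x_3=i(r_1+r_2)$ is constant on $W$. Hence $y_3$, and therefore $y_1=r_1+y_3$ and $x_1$, are constant on every component of $W\cap\gr(j)$. So applying Ax--Schanuel directly to $U\subseteq W$ with the bound $\dim U>0\geq\dim W-n$ yields nothing. Your remark that constant coordinates are ``harmless once we only track the $\zbar$-part'' does not fix this, since even the $z_l$ can be constant on $U$.

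The missing idea is to project away the constant coordinates and check that the intersection stays atypical. Let $K$ be the set of indices $i$ such that $x_i$ is constant on $U$, with constant values $\cbar$, and let $K'$ be its complement. Equivalently, these are the $i$ with $y_i$ constant on $U$, since $U\subseteq\gr(j)$ is connected and $j$ has discrete fibres. Each $i\in K$ freezes a distinct element of the transcendence basis $x_1,\ldots,x_m,y_{m+1},\ldots,y_n$ of $W$: namely $x_i$ if $i\leq m$ and $y_i$ if $i>m$. Hence the fibre $\pr_{K'}(\pr_K^{-1}(\cbar)\cap W)$ has dimension at most $n-|K|$, while $\dim\pr_{K'}U=\dim U>0$.

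Thus $\pr_{K'}U$ is an atypical component in $\C^{n-|K|}\times\C^{n-|K|}$ with no constant coordinates. Only now does Ax--Schanuel give a genuine modular relation $\Phi_N(y_i,y_l)=0$ on $U$. Evaluating it at the point of $U$ above $\taubar$ shows that $(Q_1(\rbar,\xibar),\ldots,Q_m(\rbar,\xibar),\xibar)$ is modularly dependent. From there your genericity argument via \Cref{lem: F_k indep and G_k indep} and freeness goes through exactly as in the paper.

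Your Step 1 is fine. The enlarged $W$ is the paper's $\tilde V$, and $\dim W=n$ follows at once from the transcendence basis above, so the ``main obstacle'' you single out is not the real difficulty. Likewise your case list (a)--(c) becomes unnecessary. For example, $h_k\in\PGL_2(\Q)$ would make $\theta_k$ modularly related to $Q_{\sigma(k)}(\rbar,\xibar)$, which is excluded by the same single genericity condition.
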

\begin{proof}
    Consider the following system of equations:
\begin{equation}\label{eqns tau isolated sol}
\begin{aligned}
     & x_{k} = P_k(\rbar, y_{m+1}, \ldots, y_{k-1}),~ k\in \{ m+1, \ldots, n \} \setminus T,\\
     & x_k = h_k x_{\sigma(k)},~ k\in T,\\
    & y_l = Q_l(\rbar, y_{m+1}, \ldots, y_n),~ l=1,\ldots,m.
\end{aligned}
\end{equation}
Since $P_k, Q_l$ are algebraic functions, the set $\tilde{V}$ defined by these equations may not be Zariski closed. Nevertheless, the coordinates $x_1, \ldots, x_m, y_{m+1}, \ldots, y_n$ form a transcendence basis for the function field of (the Zariski closure of) $\tilde{V}$, so $\dim \tilde{V} = n$. 

Observe that the projection of $\tilde{V} \cap \gr(j)$ to the coordinates $(x_1, \ldots, x_m)$ is exactly the set of solutions of \eqref{eqn: limit eqns}. Let $U$ be a component of $\tilde{V} \cap \gr(j)$ whose projection contains $\taubar$. We need to show that $U$ is discrete. So assume, for contradiction, that $\dim U > 0$. On $U$, a coordinate $x_l$ is constant if and only if the corresponding coordinate $y_l$ is constant, for $U \seq \gr(j)$. Let $K \seq \{ 1, \ldots, n\}$ be the set of coordinates $i$ for which $x_i$ and $y_i$ are constant on $U$, and let $K' := \{ 1, \ldots, n \} \setminus K$. Write $\pr_K$ and $\pr_{K'}$ for the projection maps to the corresponding pairs of coordinates. Then $\dim \pr_{K'} U = \dim U > 0 $. If $\pr_K(U) = \{ \cbar \}$, then the equations \eqref{eqns tau isolated sol} imply $\dim \pr_{K'}(\pr_K^{-1}(\cbar) \cap \tilde{V}) \leq n - |K|$. 

Thus, $\pr_{K'} U$ is a positive dimensional (hence atypical) component of the intersection $\pr_{K'}(\pr_K^{-1}(\cbar) \cap \tilde{V}) \cap \gr(j)$ and, since $\pr_{K'} U$ has no constant coordinates by our construction, we can apply the Ax--Schanuel theorem (\Cref{Ax-Schanuel complex}) to the Zariski closure\footnote{Alternatively, we can apply the functional version of Ax--Schanuel to the coordinate functions of $U$.} of $\pr_{K'}(\pr_K^{-1}(\cbar) \cap \tilde{V})$ and conclude that some modular relation $\Phi_N(y_i, y_l) = 0$ holds identically on $\pr_{K'} U$, hence on $U$. Applying this to the tuple in $U$ extending $\taubar$, we get that the tuple
\[ \Xi:=  (Q_{1}(\rbar, \xibar), \ldots, Q_{m}(\rbar, \xibar), \xibar) \]
is modularly dependent, that is, two of its components are related by a modular equation. Recall that $\xi_k = G_k(\rbar, \thebar)$ for $m<k\leq n$. For a fixed modular polynomial $\Phi_N(y_i, y_l)$, the set of $(\rbar, \thebar) \in \Omega \cap (\R^m \times \C^{n-m})$ for which the $i$-th and $l$-th coordinates of $\Xi$ are related by $\Phi_N$ is closed and has empty interior. Otherwise by \Cref{lem: F_k indep and G_k indep}, the coordinates $y_i$ and $y_l$ would satisfy the relation $\Phi_N(y_i, y_l) = 0$ on $V$ which contradicts the freeness of $V$.

We conclude that the set of $(\rbar, \thebar) \in \Omega \cap (\R^m \times \C^{n-m})$ for which some two coordinates of the tuple $(Q_{1}(\rbar, \xibar), \ldots, Q_{m}(\rbar, \xibar), \xibar)$ satisfy some modular relation is meagre. By the Baire category theorem, this set has empty interior, and it suffices to choose $(\rbar,\thebar)$ outside this set.
\end{proof}

\subsection{The final step}

We now complete the proof of Theorem~\ref{thm: EC full}. First, we choose $(\rbar, \thebar)$ as in \Cref{lem: tau isolated sol} to ensure  $\taubar$ is an isolated solution of \eqref{eqn: limit eqns}. As pointed out above, the functions $F_k(\zbar, \wbar)$ do not depend on $w_i$ with $i \notin T$ and do not depend on $\wbar$ at all whenever $k \in T$, so for $k \in T$ we write $F_k(\zbar)$ instead. The functions $(F_k(\zbar))_{k \in T}$ satisfy the conditions of \Cref{thm: density of analytic orbits general version}. Let $\gamma_{k,s}, \zeta_{l,s} \in \PGL_2(\Z), k \in T, 1\leq l \leq m,$ be the sequences given by that theorem, so that
\begin{itemize}

        \item for all $l$, as  $s \to \infty$, we have $\zeta_{l,s} z \to r_l$  locally uniformly in  $\h$;

        \item for all $k \in T$, as  $s \to \infty$, we have $\gamma_{k,s} F_k(\zeta_{1,s} z_1, \ldots, \zeta_{m,s} z_m ) \to h_k z_{\sigma(k)}$ locally uniformly in $\h^m$.
\end{itemize}

Recall that we want to find a solution to the system of equations
\begin{equation}\label{eqn: j(zk) = Q(z, j(Z)...) final step}
    j(z_l) = Q_l(\zbar, j(Z_{m+1}(\zbar)), \ldots, j(Z_n(\zbar))),~ l=1, \ldots, m.
\end{equation}

Instead of this, we consider the system of equations
\begin{equation}\label{eqn: j(zk) = Q(z, Gk...)}
    j(z_l) = Q_l(\zbar, G_{m+1} (\zbar, \wbar), \ldots, G_n (\zbar, \wbar))|_{\wbar = (j(F_k(\zbar)))_{k\in T}},~ l=1, \ldots, m.
\end{equation}

For $k \in T$ introduce new functions $W_{k,s}(\zbar):= j(\gamma_{k,s}F_k(\zetabar_s\zbar))$ and let $\Wbar_s(\zbar):=(W_{k,s})_{k \in T}$. We make a change of variables $\zbar \to \zetabar_s \zbar$ in \eqref{eqn: j(zk) = Q(z, Gk...)} and get
\begin{equation}\label{eqn: original system with Gamma substitution}
     j(\zeta_{l,s}z_l) = Q_l(\zetabar_s\zbar, G_{m+1} (\zetabar_s\zbar, \Wbar_s), \ldots, G_n (\zetabar_s\zbar, \Wbar_s)),~ l=1, \ldots, m.
\end{equation}
Denote the right hand side of \eqref{eqn: original system with Gamma substitution} by $H_{l,s}(\zbar)$.

Let $D \seq \h^m$ be a polydisc containing $\taubar$ such that $\taubar$ is the only solution of \eqref{eqn: limit eqns} in a larger polydisc containing $D$. In particular, the map $(j(z_1)-H_1(\zbar), \ldots, j(z_m) - H_m(\zbar))$ does not vanish on the boundary of $D$.

Since $\gamma_{k,s} F_k(\zetabar_{s} \zbar) \to h_k z_{\sigma(k)}$ uniformly for $\zbar \in \bar{D}$, and $j$ is holomorphic in $\h$, we conclude that $j(\gamma_{k,s} F_k(\zetabar_{s} \zbar)) \to j(h_k z_{\sigma(k)}) = f_k(z_{\sigma(k)})$ uniformly in $\bar{D}$. Hence, $H_{l,s}(\zbar) \to H_l(\zbar)$ uniformly in $\bar{D}$. 

Since $\taubar$ is a zero of $(j(z_1)- H_1(\zbar), \ldots, j(z_m) - H_m(\zbar))$ in $D$, by Rouch\'e's theorem (\Cref{cor: Rouche}), for all sufficiently large $s$ the map $(j(z_1)- H_{1,s}(\zbar), \ldots, j(z_m) - H_{m,s}(\zbar))$ has a zero $\taubar_s \in D$. Then $\zetabar_s \taubar_s$ is a solution to \eqref{eqn: j(zk) = Q(z, Gk...)}. 

Observe that, by our construction in \S\ref{subsec: construct Zk Fk Gk}, for all $m < k \leq n$ we have \[ Z_k(\zbar) = F_k (\zbar, \wbar)|_{\wbar = (j(F_i(\zbar)))_{i\in T}},~ j(Z_k(\zbar)) = G_k (\zbar, \wbar)|_{\wbar = (j(F_i(\zbar)))_{i\in T}}. \]

In particular, the functions $Z_k$ are defined around $\zetabar_s \taubar_s$ and, in fact, $\zetabar_s \taubar_s$ is a solution to \eqref{eqn: j(zk) = Q(z, j(Z)...) final step}. This finishes the proof of \Cref{thm: EC full}.

\section{An orbit closure theorem for analytic functions}\label{sec: dynamics}

In this section we prove \Cref{thm: density of analytic orbits general version} -- an orbit closure theorem of Ratner type allowing us to start with arbitrary holomorphic functions with certain properties and use the action of $\PGL_2(\Z)$ to approximate M\"obius transformations. We first recall Ratner's orbit closure and equidistribution theorems, then apply them to establish density of certain orbits in powers of $\PSL_2(\R)$, which is then used to prove the main approximation result. Ratner's theorems were first proved in \cite{Ratner1991OrbitClosure}. We refer the reader to \cite{Morris--Ratner} for an exposition of these theorems and their applications in various examples.

\subsection{Ratner's theorems and some consequences}

\begin{fact}[Ratner's orbit closure theorem, {\cite[Thm. A]{Ratner1991OrbitClosure}, \cite[Rmk. 1.1.15]{Morris--Ratner}}]\label{Ratner orbit closure continuous}
Let $G$ be a Lie group, $\Gamma \seq G$ be a lattice, $U := \{ u_t: t \in \R \} \seq G$ be a unipotent one-parameter subgroup, and let $g \in G$. Then there is a connected closed subgroup $S$ of $G$ containing $U$ such that $g^{-1}\Gamma g \cap S$ is a lattice in $S$ and $\overline{\Gamma g U} = \Gamma g S$.    
\end{fact}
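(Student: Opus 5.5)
This statement is Ratner's orbit closure theorem, which the paper imports as a black box from \cite{Ratner1991OrbitClosure}; I will not reprove it in full. If one wanted to reconstruct it, I would follow Ratner's route, which derives orbit closures from a classification of invariant measures. First I would pass to the homogeneous space $X := \Gamma\backslash G$, on which $U$ acts by right translation, so that the claim becomes $\overline{x U} = x S$ for $x = \Gamma g$. Conjugating by $g$ replaces $\Gamma$ with $g^{-1}\Gamma g$, which is again a lattice. This explains why the lattice condition in the conclusion is stated for $g^{-1}\Gamma g \cap S$.

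\textbf{Step 1 (measure rigidity).} Show that every $U$-invariant ergodic probability measure $\mu$ on $X$ is homogeneous, i.e.\ $\mu$ is the $S$-invariant measure on a closed orbit $yS$, where $S \supseteq U$ is connected and closed with $S \cap y^{-1}\Gamma y$ a lattice in $S$. The tool I would use is polynomial divergence of nearby unipotent orbits (Ratner's shearing property). Two $\mu$-generic points that are close and displaced transversally to $U$ drift apart polynomially. The displacement therefore becomes concentrated along the centraliser direction. Combined with the entropy/Birkhoff genericity argument, this produces extra invariance of $\mu$ under a larger connected subgroup. Iterating enlarges the invariance group until $\mu$ is algebraic.

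\textbf{Step 2 (non-divergence).} Prove the Dani--Margulis quantitative non-escape-of-mass estimate: for any $x\in X$ and $\varepsilon>0$ there is a compact $K\seq X$ such that $xu_t \in K$ for a proportion at least $1-\varepsilon$ of $t\in[0,T]$, for all large $T$. The proof rests on the fact that $t\mapsto xu_t$ is polynomial in suitable coordinates. Consequently, limits of the empirical measures $\frac1T\int_0^T \delta_{xu_t}\,dt$ are probability measures, and they are $U$-invariant.

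\textbf{Step 3 (linearization and conclusion).} Decompose any limit measure into ergodic components, each homogeneous by Step 1. There are only countably many conjugacy classes of candidate subgroups $H$ with $H\cap\Gamma$ a lattice. The linearization technique of Dani--Margulis, applied in representations $\bigwedge^{\dim H}\mathfrak g$, shows that the orbit does not spend positive proportion of time near the singular tubes of groups smaller than the minimal one containing $x$'s orbit. Taking $S$ minimal among connected closed subgroups containing $U$ such that $x S$ is closed and carries a finite $S$-invariant measure, this yields equidistribution of $xu_t$ in $xS$. The support of that measure is all of $xS$, so $\overline{xU} = xS$.

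I expect Step 1 to be the main obstacle; it is the genuine heart of Ratner's work. Steps 2 and 3 are comparatively standard once measure classification is available. In the present paper I would therefore simply cite \cite[Thm.~A]{Ratner1991OrbitClosure} and \cite{Morris--Ratner} rather than reproduce the argument.
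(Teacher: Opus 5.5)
Your proposal matches the paper. The paper gives no proof of this statement and simply imports it as a black box from \cite[Thm.~A]{Ratner1991OrbitClosure} and \cite[Rmk.~1.1.15]{Morris--Ratner}, which is exactly what you intend; your sketch of Ratner's route (measure classification, non-divergence, linearization) is a fair high-level roadmap but is not used in the paper, so the citation is all that is required.
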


\begin{fact}[Ratner's equidistribution theorem, {\cite[Thm. B]{Ratner1991OrbitClosure}, \cite[Thm. 1.3.4]{Morris--Ratner}}]\label{Ratner equidist thm}
    Let $G$ be a Lie group, $\Gamma \seq G$ be a lattice, $U := \{ u_t: t \in \R \} \seq G$ be a unipotent one-parameter subgroup, and let $g \in G$. Then the orbit $\Gamma g u_t$ is uniformly distributed in its closure. In particular, for any $T >0$ the set $\Gamma g \{ u_t: t \geq T \}$ is dense in $\overline{\Gamma g U}$.
\end{fact}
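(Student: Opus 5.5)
This statement is Ratner's equidistribution theorem, which we take as a black box from \cite[Thm. B]{Ratner1991OrbitClosure}. The only part that needs an argument is the ``in particular'' clause, and I would deduce it from uniform distribution by a standard averaging argument.

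First I would pass to the homogeneous space $X := \Gamma \backslash G$ with projection $\pi: G \to X$. Write $x := \pi(g)$. The set $\Gamma g\{u_t : t \geq T\}$ is left $\Gamma$-invariant. Hence it is dense in $\overline{\Gamma g U} = \Gamma g S$ (Fact~\ref{Ratner orbit closure continuous}) exactly when the forward orbit $\{x u_t : t \geq T\}$ is dense in $Y := x S = \pi(\Gamma g S)$. So it suffices to prove the latter.

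Next I would make ``uniformly distributed in its closure'' precise. Since $g^{-1}\Gamma g \cap S$ is a lattice in $S$, the orbit $Y \cong (g^{-1}\Gamma g \cap S)\backslash S$ carries a unique $S$-invariant probability measure $\mu_Y$, induced from Haar measure on $S$. Equidistribution means that for every bounded continuous (say compactly supported) $\varphi$ on $X$,
\[ \frac{1}{T'}\int_0^{T'} \varphi(x u_t)\, dt \longrightarrow \int_Y \varphi \, d\mu_Y \quad (T' \to \infty). \]
The one point needing care is that $\mu_Y$ has full support on $Y$. This holds because $\mu_Y$ is the pushforward of a Haar measure, and Haar measure charges every nonempty open subset of $S$.

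Now I would argue by contradiction. Let $O \seq X$ be open with $O \cap Y \neq \emptyset$, and suppose $x u_t \notin O$ for all $t \geq T$. Choose a continuous $\varphi \geq 0$ with compact support in $O$ and $\varphi > 0$ at some point of $Y$. By full support, $c := \int_Y \varphi \, d\mu_Y > 0$. On the other hand, $\varphi(x u_t) = 0$ for $t \geq T$, so for $T' > T$
\[ \frac{1}{T'}\int_0^{T'} \varphi(x u_t)\, dt \leq \frac{T \sup \varphi}{T'} \to 0, \]
which contradicts convergence to $c>0$. Hence the forward orbit from any time $T$ meets every open subset of $Y$, which gives the density.

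The only genuine input beyond Ratner's theorem is the full-support property of $\mu_Y$, and I expect it to be routine. I do not expect any real obstacle here, since the substance is entirely in the cited theorem.
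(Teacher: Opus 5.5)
Your proposal is correct and consistent with the paper, which gives no proof here: it cites Ratner's Theorem~B and leaves the ``in particular'' clause implicit, and your averaging argument supplies that missing step in the standard way. Applied to a bump function supported in an open set that the orbit misses after time $T$, the time averages tend to $0$, contradicting full support of the $S$-invariant probability measure on $xS$. One phrase is loose: $\mu_Y$ is not literally the pushforward of Haar measure on $S$, which is infinite. Full support follows cleanly from $S$-invariance instead: $S$ acts transitively on $Y$ and is $\sigma$-compact, so countably many $S$-translates of any nonempty open subset of $Y$ cover $Y$.
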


From now on, let $G:=\PSL_2(\R)$ and $\Gamma:= \PSL_2(\Z)$. Let $u_t := \begin{pmatrix} 1 & t \\ 0 & 1 \end{pmatrix}$, and let $U := \{ u_t: t \in \R \} \seq G$. We will apply Ratner's theorems to powers of $G$, $\Gamma$, with the unipotent flow given by diagonal powers of $U$, that is, $\Delta_N(U) := \{ (u_t, \ldots, u_t) \in U^N: t \in \R \}$.  Recall that a \textit{lattice} in a Lie group is a discrete subgroup of finite covolume so, for instance, $\Gamma^N$ is a lattice in $G^N$ for every $N \in \Z_{>0}$.

We also let $C \seq G$ be the image of $\PGL_2^+(\Q)$ in $G$ under the map $g \mapsto \frac{1}{\sqrt{|\det(g)|}} g$. Then $C$ is the commensurator of $\Gamma$, i.e. the subgroup of all $g \in G$ such that the intersection of $g^{-1}\Gamma g$ and $\Gamma$ has finite index in both groups.

We will need the following classification of connected subgroups of $G$ containing $U$.

\begin{lemma}\label{lem: connected subgps of SL_2(R)}
   The only connected subgroups of $G$ containing $U$ are $U$, the upper-triangular subgroup $B$, and $G$.
\end{lemma}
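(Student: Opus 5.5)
Lemma~\ref{lem: connected subgps of SL_2(R)} concerns connected subgroups of a Lie group, so I would pass to the Lie algebra. A connected subgroup $S$ of $G$ (closed or not) is determined by its Lie algebra $\mathfrak{s}\seq \mathfrak{sl}_2(\R)$, which is a Lie subalgebra; here we use the standard correspondence between connected Lie subgroups and subalgebras. If $S \supseteq U$, then $\mathfrak{s}$ contains $e:=\begin{pmatrix}0&1\\0&0\end{pmatrix}$. The statement therefore reduces to the claim that the only subalgebras of $\mathfrak{sl}_2(\R)$ containing $e$ are $\R e$, the upper-triangular subalgebra $\mathfrak{b}=\R h\oplus \R e$ with $h:=\mathrm{diag}(1,-1)$, and $\mathfrak{sl}_2(\R)$ itself. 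These are the Lie algebras of $U$, $B$ and $G$.

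To prove the claim, I would use the basis $h,e,f$ with $f:=\begin{pmatrix}0&0\\1&0\end{pmatrix}$ and the relations $[h,e]=2e$, $[h,f]=-2f$, $[e,f]=h$. The three dimensions are handled separately.
\begin{itemize}
\item If $\dim\mathfrak{s}=1$, then $\mathfrak{s}=\R e$.
\item If $\dim \mathfrak{s}=3$, then $\mathfrak{s}=\mathfrak{sl}_2(\R)$.
\item If $\dim\mathfrak{s}=2$, write $\mathfrak{s}=\R e\oplus\R x$ with $x=ah+cf$; we may drop the $e$-component of $x$.
\end{itemize}
In the two-dimensional case, compute $[e,x]=-2ae+ch$. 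This bracket must lie in $\mathfrak{s}$, so $ch \in \mathfrak{s}$. If $c\neq 0$, then $h\in\mathfrak{s}$, and hence $cf = x - ah \in\mathfrak{s}$, forcing $\dim\mathfrak{s}=3$, a contradiction. So $c=0$ and $\mathfrak{s}=\mathfrak{b}$.

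Finally, I would translate back to groups. The connected subgroup with Lie algebra $\R e$ is $U$, and the one with Lie algebra $\mathfrak{b}$ is the identity component of the upper-triangular group in $\PSL_2(\R)$. That component is $B$, since the upper-triangular group modulo $\pm I$ is already connected (diagonal entries can be taken positive). The subalgebra $\mathfrak{sl}_2(\R)$ gives $G$. The only step needing care is the remark that no closedness hypothesis is needed: every connected subgroup of a Lie group is path-connected, hence an immersed Lie subgroup by Yamabe's theorem, so the correspondence applies. I don't expect any real obstacle, since the two-dimensional bracket computation is the whole content.
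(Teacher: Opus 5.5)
Your Lie-algebra classification is correct, including the two-dimensional bracket computation and the identification of the corresponding analytic subgroups with $U$, $B$, $G$. The gap is in the reduction to it. You assert that every connected subgroup of a Lie group is path-connected, and this is false. Yamabe's theorem is a statement about \emph{arcwise} connected subgroups, so it does not apply to an arbitrary connected $S$. For a concrete counterexample, F.~B.~Jones (1942) constructed discontinuous additive functions $f\colon\R\to\R$ with connected graph. That graph is a connected subgroup of $\R^2$, but its identity path component is trivial: by Yamabe it would otherwise be a line through the origin inside the graph of $f$, forcing $f$ to be linear. So, as written, you have only classified the connected \emph{Lie} subgroups containing $U$. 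That is exactly the easy route the paper mentions and sets aside (``if we assume the connected subgroup is a Lie subgroup''), whereas the lemma is stated for all connected subgroups.

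The gap can be closed, but doing so brings in the paper's own argument. Let $S_0$ be the path component of the identity in $S$. It is a path-connected normal subgroup of $S$ containing $U$, so your argument applies to it and gives $S_0\in\{U,B,G\}$. If $S_0=G$, then $S=G$. Otherwise $S$ normalises $U$ or $B$. Each of these has normaliser $B$ in $G$, since a normalising element must map the group's unique fixed point $\infty\in\p^1(\R)$ to itself. Hence $S\seq B$. Now apply the continuous homomorphism $\varphi\colon B\to\R$, $\begin{pmatrix} a & b\\ 0 & a^{-1}\end{pmatrix}\mapsto\log a$, whose kernel is $U$. The image $\varphi(S)$ is a connected subgroup of $\R$, hence $0$ or $\R$, and since $S\supseteq\ker\varphi$ this gives $S=U$ or $S=B$.

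This last step is exactly Case~1 of the paper's proof, which avoids Lie theory entirely. When $S\not\seq B$, the paper conjugates $U$ by a suitable element of $S$ to obtain the lower unipotent group $U^{\mathrm T}\seq S$, and $U$, $U^{\mathrm T}$ generate $G$; that case uses no topology at all. For the paper's actual applications, your analytic version would already suffice. These are closed connected subgroups from Ratner's theorem and their coordinate projections, which are images of connected Lie groups and hence path-connected.
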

\begin{proof}
    This is a well-known fact; it is mentioned, for instance, in \cite[Ex. 1.1.16]{Morris--Ratner}. If we assume the connected subgroup is a Lie subgroup, then we can use the Lie subalgebra-subgroup correspondence and apply the classification of Lie subalgebras of $\mathfrak{sl}_2(\R)$ (see, e.g., \cite[Appendix A]{Derdzinski2008}). However, it seems hard to find an exact reference for our lemma, hence we give an elementary proof below.\footnote{This proof is due to ChatGPT 6 Astra.}

    Let $S \seq G$ be a connected subgroup containing $U$. The upper-triangular subgroup $B$ consists of matrices $\begin{pmatrix} a & b \\ 0 & a^{-1}    \end{pmatrix}$ with $a \in \R_{>0}, b \in \R$.

    \textbf{Case 1.} $S \seq B$.

    Consider the map $\varphi: B \to \R$ given by $\begin{pmatrix} a & b \\ 0 & a^{-1}    \end{pmatrix} \mapsto \log a$. This is a continuous group homomorphism onto $(\R; +)$ with $\ker(\varphi) = U$. Since $S$ is connected, $\varphi(S)$ is a connected subgroup of $(\R; +)$, hence $\varphi(S)$ is either $0$ or $\R$. Therefore, $S$ is either $U$ or $B$. 

    \textbf{Case 2.} $S \not\subseteq B$.

    Pick $g = \begin{pmatrix} a & b \\ c & d    \end{pmatrix} \in S$ with $c \neq 0$. Since $U \seq S$, we have \[ h:= u_{-\sfrac{a}{c}} g = \begin{pmatrix} 1 & -\sfrac{a}{c} \\ 0 & 1    \end{pmatrix}\begin{pmatrix} a & b \\ c & d    \end{pmatrix} = \begin{pmatrix} 0 & -\sfrac{1}{c} \\ c & d    \end{pmatrix} \in S. \]
    Then \[ hu_th^{-1} = \begin{pmatrix} 0 & -\sfrac{1}{c} \\ c & d    \end{pmatrix} \begin{pmatrix} 1 & t \\ 0 & 1    \end{pmatrix} \begin{pmatrix} d & \sfrac{1}{c} \\ -c & 0    \end{pmatrix} = \begin{pmatrix} 1 & 0 \\ -c^2t & 1    \end{pmatrix} \in S. \]

    Since $t \in \R$ is arbitrary, we conclude that $S$ contains the lower-triangular subgroup $U^{\rm T} := \{ u_t^{\rm T}: t \in \R \}$. But $U$ and $U^{\rm T}$ generate $G$ (it is straightforward to check that $G = UU^{\rm T}UU^{\rm T}$), so $S=G$.
\end{proof}

\begin{lemma}\label{lem: Gamma g U dense}
    Let $g \in G$ be such that $g (\infty) \notin \p^1(\Q) = \Q \cup \{ \infty \}$. Then $\Gamma g U$ is dense in $G$.
\end{lemma}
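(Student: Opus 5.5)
We will apply Ratner's orbit closure theorem (\Cref{Ratner orbit closure continuous}) with $G=\PSL_2(\R)$, the lattice $\Gamma$, and the horocycle flow $U$. It gives a connected closed subgroup $S$ with $U\seq S$, such that $g^{-1}\Gamma g\cap S$ is a lattice in $S$ and $\overline{\Gamma gU}=\Gamma gS$. By \Cref{lem: connected subgps of SL_2(R)}, $S$ is one of $U$, $B$, $G$. It therefore suffices to rule out $S=U$ and $S=B$, since $S=G$ gives $\overline{\Gamma g U}=\Gamma g G=G$.

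\textbf{The case $S=B$.} The group $B$ is not unimodular: its modular function is nontrivial, since conjugation by the diagonal part scales $U$. A locally compact group admitting a lattice must be unimodular, so $B$ contains no lattice. This excludes $S=B$.

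\textbf{The case $S=U$.} Here $g^{-1}\Gamma g\cap U$ is a lattice in $U\cong\R$, so it is infinite cyclic, generated by some $u_{t_0}$ with $t_0\neq 0$. Then $\gamma:=g u_{t_0} g^{-1}\in\Gamma$ is a nontrivial parabolic element. Since $u_{t_0}$ fixes $\infty$, the element $\gamma$ fixes $g(\infty)$, and this is its unique fixed point in $\p^1(\R)$. On the other hand, a parabolic element of $\PSL_2(\Z)$ has integer entries with trace $\pm2$. Its fixed point solves a quadratic equation with rational coefficients and zero discriminant, so it lies in $\p^1(\Q)$. This gives $g(\infty)\in\p^1(\Q)$, contradicting the hypothesis.

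Hence $S=G$ and $\Gamma gU$ is dense in $G$. The only step needing care is the $S=U$ case, namely that fixed points of parabolic elements of $\Gamma$ are cusps. This is standard and reduces to the explicit trace computation above: the fixed point is $(a-d)/(2c)$ when $c\neq0$, and $\infty$ when $c=0$. The unimodularity argument for $B$ can be replaced, if preferred, by noting that $B\backslash$-quotients of any discrete subgroup have infinite Haar measure, because of the expanding diagonal direction.
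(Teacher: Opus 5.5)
Your proof is correct and follows essentially the same route as the paper: Ratner's orbit closure theorem together with the classification of connected subgroups containing $U$, with $S=B$ excluded by non-unimodularity and $S=U$ excluded because a nontrivial element of $g^{-1}\Gamma g\cap U$ forces $g(\infty)\in\p^1(\Q)$. The only difference is cosmetic: you get rationality from the fixed point of the parabolic integral matrix $gu_{t_0}g^{-1}$, whereas the paper reads $g(\infty)=a/c=a^2t/(act)$ directly off the integral entries of $gu_tg^{-1}$, where $a,c$ are entries of $g$.
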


\begin{proof}
    First, we show that if $g^{-1}\Gamma g \cap U$ is non-trivial then $g (\infty) \in \p^1(\Q)$. Indeed, if this intersection is non-trivial then for some $t \in \R^{\times}$ we have $g u_t g^{-1} \in \Gamma$. Writing $g = \begin{pmatrix}
        a & b \\ c & d
    \end{pmatrix}$ we get \[ g u_t g^{-1} =  \begin{pmatrix}
        1- act & a^2 t \\ -c^2 t & 1 + act
    \end{pmatrix}.\]

    If this matrix is in $\Gamma$ then all of its entries are integral, hence $g (\infty) = \frac{a}{c} = \frac{a^2 t}{act } \in \Q \cup \{ \infty \}$. Note that when $a=0$ or $c=0$ we have $g(\infty) \in \{ 0, \infty \}$.    Since we assumed $g (\infty) \notin \p^1(\Q) = \Q \cup \{ \infty \}$, we conclude that $g^{-1}\Gamma g \cap U$ is trivial. 
    
    By Ratner's orbit closure theorem (Fact~\ref{Ratner orbit closure continuous}), the closure of $\Gamma g U$ is of the form $\Gamma g S$ where $S$ is a connected closed subgroup containing $U$, and $g^{-1}\Gamma g \cap S$ is a lattice in $S$. By \Cref{lem: connected subgps of SL_2(R)}, $S$ is either $U$, or the upper-triangular subgroup $B$, or all of $G$. The first case is not possible for $g^{-1}\Gamma g \cap U$ is trivial and so is not a lattice in $U$. On the other hand, $B$ does not have any lattices. To see this, we invoke \cite[Prop. 9.20]{EinsiedlerWard2011} which states that a closed linear group with a lattice must be unimodular -- the left and right Haar measures agree -- and \cite[Ex. C5 (4)]{EinsiedlerWard2011} which shows that $B$ is not unimodular. We conclude that $S=G$, and $\overline{\Gamma g U} = \Gamma g G =  G$.
\end{proof}

\begin{proposition}\label{prop: Gamma g_0...g_N closure}
    Let $\gbar: = (g_1, \ldots, g_N) \in G^N$ be such that  $g_1(\infty), \ldots, g_N(\infty)$ are in distinct $C$-orbits and not in $\p_1(\Q)$. Then $\Gamma^{N} \gbar \Delta_{N}(U)$ is dense in $G^{N}$.
\end{proposition}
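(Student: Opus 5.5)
We apply Ratner's orbit closure theorem (Fact~\ref{Ratner orbit closure continuous}) in $G^N$, with lattice $\Gamma^N$ and unipotent one-parameter subgroup $\Delta_N(U)$. This gives a connected closed subgroup $S \seq G^N$ containing $\Delta_N(U)$ such that $\overline{\Gamma^N \gbar \Delta_N(U)} = \Gamma^N \gbar S$ and $\gbar^{-1}\Gamma^N \gbar \cap S$ is a lattice in $S$. It then suffices to show $S = G^N$, and we argue by induction on $N$. The case $N=1$ is \Cref{lem: Gamma g U dense}.

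\textbf{Coordinate projections.} For each $i$, let $\pi_i: G^N \to G$ be the projection. Then $\pi_i(\overline{\Gamma^N \gbar \Delta_N(U)})$ contains $\Gamma g_i U$, which is dense by \Cref{lem: Gamma g U dense}. More usefully, for any pair $i<k$ consider the projection $\pi_{ik}$ to $G^2$. By the inductive or pairwise argument below, $\Gamma^2 (g_i,g_k)\Delta_2(U)$ is dense in $G^2$. Hence $\pi_{ik}(S)$ is a connected subgroup of $G^2$ containing $\Delta_2(U)$ whose $\Gamma^2$-translate is dense. Together with the lattice property, this should force $\pi_{ik}(S) = G^2$. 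Granting this, $S$ is a connected subgroup of $G^N$ surjecting onto every pair of factors. The Lie algebra of $S$ is then a subalgebra of $\mathfrak{sl}_2(\R)^N$ projecting onto each $\mathfrak{sl}_2(\R)^2$. By the Goursat-type lemma for simple Lie algebras, a subalgebra of a product of simple algebras that surjects onto every pair of factors is the whole product. So $S = G^N$.

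\textbf{The case $N=2$.} This is the main obstacle. Since $\pi_1(S)$ and $\pi_2(S)$ are connected, contain $U$, and (by $N=1$) must be $G$, Goursat's lemma says $S$ is either $G^2$ or the graph $\{(x, \alpha(x))\}$ of an automorphism $\alpha$ of $G$, i.e.\ conjugation by some $a \in \PGL_2(\R)$. In the graph case, $\Delta_2(U) \seq S$ forces $a$ to centralise $U$, so $a \in \pm U$ up to sign conventions, and $S = \{(x, axa^{-1})\}$. We then use that $\gbar^{-1}\Gamma^2\gbar \cap S$ is a lattice in $S$. Its projection to the first factor is $\{x \in g_1^{-1}\Gamma g_1 : a x a^{-1} \in g_2^{-1}\Gamma g_2\}$, which must be a lattice in $G$. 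Therefore $g_1^{-1}\Gamma g_1$ and $(g_2a)^{-1}\Gamma (g_2 a)$ are commensurable. By the characterisation of $C$ as the commensurator, $g_2 a g_1^{-1} \in C$.

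\textbf{Reaching the contradiction.} Since $a$ fixes $\infty$ (it centralises $U$), we get $g_2(\infty) = g_2 a(\infty) = (g_2 a g_1^{-1}) g_1(\infty) \in C g_1(\infty)$. This contradicts the assumption that $g_1(\infty)$ and $g_2(\infty)$ lie in distinct $C$-orbits, so $S = G^2$. The delicate points are the precise form of Goursat's lemma for connected (possibly non-closed-image) subgroups, which I would handle via Lie algebras, and the lattice-projection step. For the latter, I would use that projecting a lattice in a graph subgroup isomorphic to $G$ along the isomorphism yields a lattice.
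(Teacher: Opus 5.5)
Your $N=2$ analysis matches the core of the paper's proof: Goursat; $a$ centralising $\Delta_2(U)$, hence $a\in U$ and $a$ fixes $\infty$; the lattice in the graph giving commensurability and $g_2ag_1^{-1}\in C$. The gap is in how you get full projections of $S$. Twice you derive a structural statement about $S$ from a density statement: $\pi_i(S)=G$ ``by $N=1$'', and $\pi_{ik}(S)=G^2$ from the density of $\Gamma^2(g_i,g_k)\Delta_2(U)$.

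Density gives nothing here. The set $\Gamma^2(g_i,g_k)\pi_{ik}(S)=\pi_{ik}(\Gamma^N\gbar S)$ is the projection of a closed set and need not be closed. Proper subgroups can have dense translates: $\Gamma g_iB\supseteq\Gamma g_iU$ is dense in $G$, so density of $\Gamma g_iU$ cannot exclude $\pi_i(S)=B$ (or $\pi_i(S)=U$). Likewise, knowing that the Ratner subgroup attached to the pair $(g_i,g_k)$ is $G^2$ says nothing directly about the subgroup $\pi_{ik}(S)$. Your ``this should force'' is exactly where the real argument is needed, and the induction on $N$ does not supply it.

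What does the work is the lattice. Put $\Lambda:=\gbar^{-1}\Gamma^N\gbar\cap S$, and let $I$ be a set of coordinates. Then $\pi_I(\Lambda)$ is a lattice in the connected Lie group $\pi_I(S)$. It is discrete because it lies in the discrete group $\gbar_I^{-1}\Gamma^{|I|}\gbar_I$. It has finite covolume because you can push forward the finite invariant measure on $S/\Lambda$. Hence the discrete group $\gbar_I^{-1}\Gamma^{|I|}\gbar_I\cap\pi_I(S)$, which contains it, is also a lattice.

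For $I=\{i\}$ this gives $\pi_i(S)=G$ by the \emph{proof}, not the statement, of \Cref{lem: Gamma g U dense}: $g_i^{-1}\Gamma g_i\cap U$ is trivial since $g_i(\infty)\notin\p^1(\Q)$, and $B$ admits no lattice. For $I=\{i,k\}$, your $N=2$ argument applies verbatim to the subgroup $\pi_{ik}(S)$ itself and yields $\pi_{ik}(S)=G^2$. Your Lie-algebra Goursat lemma then finishes.

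With this repair the induction is superfluous, and the proof becomes the paper's. The paper shows each $\pi_i(S)=G$ via the projected lattice. If $S\neq G^N$, Goursat gives a pair $(k,l)$ with $\pi_{kl}(S)$ the graph of conjugation by some $v\in U$. The projected lattice in $\pi_{kl}(S)$ then yields $g_kvg_l^{-1}\in C$, which contradicts the hypothesis that $g_k(\infty)$ and $g_l(\infty)$ lie in distinct $C$-orbits.
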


\begin{proof}
    Let $S$ be the connected closed subgroup of $G^N$ given by Ratner's orbit closure theorem. Thus, $\gbar^{-1} \Gamma^N \gbar \cap S$ is a lattice in $S$, and $\overline{\Gamma^N \gbar \Delta_N(U)} = \Gamma^N \gbar S$. 
    
    The projection of $S$ to the $i$-th copy of $G$ may not a priori be closed, but it is a connected Lie subgroup containing $U$. Hence, by \Cref{lem: connected subgps of SL_2(R)}, each projection is either $U, B$, or $G$. Moreover, since $g_i^{-1}\Gamma g_i \cap \pr_i S \supseteq \pr_i (\gbar^{-1} \Gamma^N \gbar \cap S)$, and the latter is a lattice in $\pr_i S$ -- it is discrete as it is contained in $g_i^{-1}\Gamma g_i$, and we can push forward the finite invariant measure on the quotient along the projection -- so must be the former. Therefore, as in the proof of \Cref{lem: Gamma g U dense}, each projection is equal to $G$.

    We claim that $S = G^N$. If not, then we can apply Goursat's lemma (see \cite[Prop. 5.4.11]{Gallinaro-thesis} for details) to conclude that the projection of $S$ to some two coordinates is the graph of an automorphism of $G$. The Lie group automorphisms of $G$ are of the form $g \mapsto vgv^{-1}$ for some $v \in \PGL_2(\R)$ (see \cite[Lemma 4.3]{BorJacobowitz2021}). Therefore, there are indices $k,l$ and a matrix $v \in \PGL_2(\R)$ such that for any $\habar:=(h_1, \ldots, h_N) \in S$ we have $h_k  = v h_l v^{-1}$. In particular, since $S$ contains $\Delta_N(U)$, we must have \[ u_t = v u_t v^{-1} \text{ for any } t.\] In other words, $v$ commutes with all $u_t$ and since the centraliser of $U$ in $\PGL_2(\R)$ is $U$, we conclude that $v \in U$.

    Writing $S_{k,l}$ for the projection of $S$ to the $(k,l)$-coordinates, and taking into account that $\gbar^{-1} \Gamma^N \gbar \cap S$ is a lattice in $S$, we get that
    \[ (g_k,g_l)^{-1} \Gamma^2 (g_k, g_l) \cap S_{k,l} \]  is a lattice in  $S_{k,l}$ (as above). Taking the projection to the $k$-coordinate, we conclude that \[ g_k^{-1}\Gamma g_k \cap v (g_l^{-1} \Gamma g_l) v^{-1}\] is a lattice in $G$. This then implies that $g_k^{-1}\Gamma g_k$ and $v g_l^{-1} \Gamma (vg_l^{-1})^{-1}$ are commensurable, i.e. their intersection has finite index in both groups, which happens if and only if $\kappa:= g_kvg_l^{-1} \in C$, for $C$ is exactly the commensurator of $\Gamma$. Then $g_kv = \kappa g_l$ and hence, taking into account that $v(\infty) = \infty$, \[ g_k(\infty) = g_kv(\infty) = \kappa g_l(\infty) \] which contradicts our assumption that $g_k(\infty)$ and $g_l(\infty)$ are in distinct $C$-orbits.\footnote{In this proof, the steps where we show that $v \in U$ and that $g_k^{-1}\Gamma g_k \cap v (g_l^{-1} \Gamma g_l) v^{-1}$ is a lattice in $G$ were taken from an argument by ChatGPT 6 Astra.}
\end{proof}

Now, given $r\in \R \cup \{ \infty \}$, we want to find a subgroup $H_r \seq G$ fixing $r$. If $r = \infty$ then $H_{\infty} := U$ fixes $r$. Since the $G$-orbit of $\infty$ is exactly $\R \cup \{ \infty \}$, we can use this to explicitly find $H_r$ for any $r \in \R$. First, observe that the matrix \[ \mu_r := \begin{pmatrix} r & r-1 \\ 1 & 1 \end{pmatrix}\] maps $\infty$ to $r$. Hence, the subgroup \[ H_r = \mu_r H_\infty \mu_r^{-1} =  \left\{ \xi_{r,t}:= \mu_r u_t \mu_r^{-1} = \begin{pmatrix} 1 - rt & r^2t \\ -t & 1 + rt \end{pmatrix}: t \in \R \right\} \] fixes $r$.

\begin{lemma}\label{lem: xi_s from H_r to r}
    As $t\to \infty$, the orbit $\xi_{r,t} z$ tends to $r$ locally uniformly in $\h$ (i.e. on any compact subset of $\h$).
\end{lemma}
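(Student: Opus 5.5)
Conjugation by $\mu_r$ reduces the claim to the point at infinity. Since $\xi_{r,t} = \mu_r u_t \mu_r^{-1}$, we have $\xi_{r,t} z = \mu_r\bigl(u_t(\mu_r^{-1} z)\bigr) = \mu_r\bigl(\mu_r^{-1}z + t\bigr)$. Fix a compact set $K \seq \h$. Because $\det \mu_r = r - (r-1) = 1$, the map $\mu_r^{-1}$ is a real M\"obius transformation preserving $\h$, so $K' := \mu_r^{-1}(K)$ is a compact subset of $\h$. The quantity $w := \mu_r^{-1}z + t$ therefore ranges over $K' + t$, whose points satisfy $|w| \geq t - \max_{K'}|z'|$. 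Hence $w \to \infty$ uniformly for $z \in K$ as $t \to \infty$.

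It remains to note that $\mu_r$ is continuous at $\infty$ with $\mu_r(\infty) = r$. Explicitly,
\[ \mu_r(w) - r = \frac{rw + r - 1}{w+1} - r = \frac{-1}{w+1}, \]
so $|\mu_r(w) - r| \leq 1/(|w|-1)$ whenever $|w|>1$. Combining this with the uniform lower bound on $|w|$ gives
\[ \sup_{z\in K} |\xi_{r,t} z - r| \leq \frac{1}{t - \max_{K'}|z'| - 1} \to 0. \]

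Alternatively, one can compute directly: $\xi_{r,t}z - r = \frac{(z-r)}{-t(z-r)+1}$, which behaves like $-1/t$. The denominator is bounded below by $t\,|\im z|$ in absolute value, and $|\im z|$ is bounded below on $K$.

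No step is a real obstacle. The only point to check is that $K$ avoids $\R$, which keeps the denominators away from zero uniformly. This is exactly where local uniformity, rather than global uniformity, is needed.
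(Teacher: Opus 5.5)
Your proof is correct and follows the same route as the paper: conjugate by $\mu_r$, note that $u_t$ pushes the compact set $\mu_r^{-1}(K)$ uniformly to $\infty$, and use $\mu_r(\infty)=r$. Your explicit bound via $\mu_r(w)-r=-1/(w+1)$, and the alternative formula $\xi_{r,t}z-r=(z-r)/(1-t(z-r))$, make quantitative what the paper states in one line. The paper also remarks that a direct computation is possible.
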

\begin{proof}
    This follows from the facts that $u_t (z) \to \infty$, and so $u_t (\mu_r^{-1} z) \to \infty$ locally uniformly, and that $\mu_r(\infty) = r$. The lemma can also be proven directly.
\end{proof}

For $N \in \Z_{>0}$ we let $\Delta_N(H_r):= \{ (\xi_{r,t}, \ldots, \xi_{r,t}) \in H_r^N: t \in \R \}$.

\begin{proposition}\label{prop: orbit closure irrational N}
    Let $r \in \R$ and $\gbar:= (I, g_1, \ldots, g_N) \in G^{N+1}$ be such that  $r, g_1(r), \ldots, g_N(r)$ are in distinct $C$-orbits and outside $\p_1(\Q)$. Then $\Gamma^{N+1} \gbar \Delta_{N+1}(H_r)$ is dense in $G^{N+1}$.
\end{proposition}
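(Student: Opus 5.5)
The plan is to reduce to \Cref{prop: Gamma g_0...g_N closure} by conjugating the flow $\Delta_{N+1}(H_r)$ to $\Delta_{N+1}(U)$. Since $\xi_{r,t} = \mu_r u_t \mu_r^{-1}$, we have
\[ \Gamma^{N+1} \gbar \Delta_{N+1}(H_r) = \Gamma^{N+1} \gbar \,\Delta_{N+1}(\mu_r)\, \Delta_{N+1}(U)\, \Delta_{N+1}(\mu_r)^{-1}, \]
where $\Delta_{N+1}(\mu_r) = (\mu_r, \ldots, \mu_r)$. Right multiplication by the fixed element $\Delta_{N+1}(\mu_r)^{-1}$ is a homeomorphism of $G^{N+1}$. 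So it suffices to show that $\Gamma^{N+1} \gbar' \Delta_{N+1}(U)$ is dense, where
\[ \gbar' := (\mu_r, g_1\mu_r, \ldots, g_N \mu_r). \]
A minor technical point: $\mu_r$ has determinant $r-(r-1)=1$, so it does lie in $G$. If one prefers, any element of $G$ sending $\infty$ to $r$ would do.

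Next I would check the hypotheses of \Cref{prop: Gamma g_0...g_N closure} for $\gbar'$. Its entries send $\infty$ to
\[ \mu_r(\infty) = r \quad\text{and}\quad g_i\mu_r(\infty) = g_i(r), \quad 1 \le i \le N. \]
By assumption, the $N+1$ points $r, g_1(r), \ldots, g_N(r)$ are in distinct $C$-orbits and none lies in $\p^1(\Q)$. That is exactly the hypothesis of \Cref{prop: Gamma g_0...g_N closure} applied with $N+1$ in place of $N$. Hence $\Gamma^{N+1}\gbar'\Delta_{N+1}(U)$ is dense in $G^{N+1}$. Translating back by $\Delta_{N+1}(\mu_r)^{-1}$ gives the density of $\Gamma^{N+1} \gbar \Delta_{N+1}(H_r)$.

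There is no substantial obstacle here; the real work (Ratner, Goursat, commensurator) was done in \Cref{prop: Gamma g_0...g_N closure}. The only thing to be careful about is the bookkeeping of the conjugation: the conjugating element must act diagonally and on the right, so that the left $\Gamma^{N+1}$-action is untouched. One should also verify that $\mu_r(\infty)=r$ converts the condition on the points $g_i(\infty)$ into the stated condition on $r$ and the points $g_i(r)$.
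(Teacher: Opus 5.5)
Your proposal is correct and follows the paper's proof: the paper also deduces the statement from \Cref{prop: Gamma g_0...g_N closure} via $H_r = \mu_r U \mu_r^{-1}$, using $g_k\mu_r(\infty) = g_k(r)$. Your explicit handling of the diagonal right conjugation by $\Delta_{N+1}(\mu_r)$ just spells out what the paper leaves implicit.
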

\begin{proof}
    This follows from \Cref{prop: Gamma g_0...g_N closure} and the fact that $ H_{r} = \mu_r U \mu_r^{-1}$, taking into account that $g_k \mu_r (\infty) = g_k(r)$.
\end{proof}

\begin{corollary}\label{cor: approx. g_0...g_N extra conditions}
    Let $r \in \R $, and $ g_0 = I, g_1, \ldots, g_N \in G$ be such that  $g_0(r), \ldots, g_N(r)$ are in distinct $C$-orbits and not in $\p_1(\Q)$. Let $h_0, h_1, \ldots, h_N \in G$ be arbitrary. Then there are sequences of matrices $\gamma_{0,s}, \ldots, \gamma_{N,s} \in \Gamma, \eta_s \in H_r$ such that     
    \begin{itemize}
        \item $\eta_s = \xi_{r,t_s}$ with $t_s \to +\infty$ as $s \to \infty$;
        \item for all $0\leq k \leq N$ we have $\gamma_{k,s} g_k \eta_s \to h_k$ as $s \to \infty$.
    \end{itemize}
    In particular, $\eta_s(z) \to r$ locally uniformly on $\h$.
\end{corollary}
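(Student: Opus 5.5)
This follows from \Cref{prop: orbit closure irrational N} combined with Ratner's equidistribution theorem (Fact~\ref{Ratner equidist thm}). The point is that we need approximating parameters $t$ tending to $+\infty$, not merely density of the full orbit.

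The plan is to conjugate everything to the cusp at $\infty$. Write $\gbar := (I, g_1, \ldots, g_N)$ and $\habar := (h_0, \ldots, h_N)$. Since $\xi_{r,t} = \mu_r u_t \mu_r^{-1}$, we have
\[ \gamma_{k} g_k \xi_{r,t} = \gamma_k (g_k \mu_r) u_t \mu_r^{-1}. \]
So it suffices to approximate $(h_k \mu_r)_k$ by elements $(\gamma_k g_k \mu_r u_t)_k$ with $t$ arbitrarily large. Set $\gbar' := (g_k\mu_r)_{k}$. Then $g_k\mu_r(\infty) = g_k(r)$, and by hypothesis these points are in distinct $C$-orbits and lie outside $\p^1(\Q)$. \Cref{prop: Gamma g_0...g_N closure} therefore gives that $\Gamma^{N+1}\gbar'\Delta_{N+1}(U)$ is dense in $G^{N+1}$. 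Equivalently, by Fact~\ref{Ratner orbit closure continuous}, the orbit closure equals $G^{N+1}$.

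Next apply Fact~\ref{Ratner equidist thm} in the Lie group $G^{N+1}$, with lattice $\Gamma^{N+1}$ and unipotent one-parameter subgroup $\Delta_{N+1}(U)$. For each $T>0$, the set $\Gamma^{N+1}\gbar'\{(u_t,\ldots,u_t): t\ge T\}$ is dense in $\overline{\Gamma^{N+1}\gbar'\Delta_{N+1}(U)} = G^{N+1}$. For each $s$, take $T=s$. Choose $t_s \ge s$ and $\gamma_{k,s}\in\Gamma$ such that $\gamma_{k,s} g_k\mu_r u_{t_s}$ lies within distance $1/s$ of $h_k\mu_r$ for every $k$; this makes sense because $\Gamma^{N+1}\gbar'u_{t}$ is a subset of $G^{N+1}$ and $G^{N+1}$ is metrizable. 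Right-multiplying by $\mu_r^{-1}$ is continuous, so $\gamma_{k,s} g_k \eta_s \to h_k$, where $\eta_s := \xi_{r,t_s}\in H_r$ and $t_s\to+\infty$.

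The final claim, that $\eta_s z\to r$ locally uniformly, is exactly \Cref{lem: xi_s from H_r to r} applied along the sequence $t_s\to+\infty$.

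The only delicate point is the need for $t$ large. Density of the whole orbit alone does not prevent all good approximations from occurring at bounded $t$. This is precisely what the equidistribution statement resolves, and the remaining steps are routine bookkeeping of conjugation by $\mu_r$.
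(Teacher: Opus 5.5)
Your proposal is correct and matches the paper's proof, which simply invokes \Cref{prop: orbit closure irrational N} together with Ratner's equidistribution theorem (Fact~\ref{Ratner equidist thm}). You just make explicit two steps the paper leaves implicit: the conjugation by $\mu_r$ that underlies that proposition, reducing it to \Cref{prop: Gamma g_0...g_N closure} for the diagonal $U$-flow, and the extraction of parameters $t_s \geq s$ from the density of the orbit tails.
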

\begin{proof}
   This follows immediately from \Cref{prop: orbit closure irrational N} and Ratner's equidistribution theorem (\Cref{Ratner equidist thm}).
\end{proof}

\subsection{A single function of a single variable}
In this subsection we establish the $N=m=1$ case of \Cref{thm: density of analytic orbits general version}.

Recall that $G:= \PSL_2(\R)$ and $\Gamma := \PSL_2(\Z)$. Let  $\hat{G}:= \PGL_2(\R)$,  $\hat{\Gamma}:=\PGL_2(\Z)$, and $\hat{C} := \left\{ \frac{1}{\sqrt{|\det(g)|}} g : g \in \PGL_2(\Q) \right\} \seq \hat{G}$.

\begin{proposition}\label{prop: gamma F eta to h with  conditions}
     Let $\Omega \seq \C$ be a complex domain with $\Omega \cap \R \neq \emptyset$.   Let $F: \Omega \to \C$ be a holomorphic function such that $F(\Omega \cap \R) \seq \R$. Assume $r \in \Omega \cap (\R\setminus \Q)$ is such that $F'(r) \neq 0$ and $F(r) \notin \hat{C}r \cup \Q$. Then for every $h \in \hat{G}$ there are sequences $\gamma_s, \nu_s \in \hat{\Gamma}, \eta_s \in H_r, s \in \Z_{>0},$ with the following properties:
 \begin{itemize}
        \item $\eta_s = \xi_{r,t_s}$ with $t_s \to +\infty$ as $s \to \infty$;

        \item $\nu_s \eta_s \to I$;
        \item $\lVert \gamma_{s} \rVert = O( t_{s} )$;
        \item $\gamma_{s} (F (\eta_s z )) \to hz$ as $s \to \infty$ locally uniformly in $\h$.
 \end{itemize} 
\end{proposition}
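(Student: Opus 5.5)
Following the overview, I would first replace $F$ near $r$ by a real M\"obius transformation $g$ with the same $2$-jet at $r$, apply \Cref{cor: approx. g_0...g_N extra conditions} to the pair $(I,g)$, and then pass from $g$ back to $F$ via Taylor expansion.

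\emph{The osculating M\"obius map.} Since $F'(r)\neq 0$ and $F(r),F'(r),F''(r)\in\R$, there is a unique $g\in\hat G$ with $g^{(i)}(r)=F^{(i)}(r)$ for $i=0,1,2$. Indeed, M\"obius maps act simply transitively on $2$-jets with nonzero derivative: compose with an affine map to reduce to $F(r)=r$, $F'(r)=1$, and then match $F''(r)$ by a map $z\mapsto z/(1-c(z-r))$ conjugated by a translation. Then $g(r)=F(r)\notin \hat C r\cup \Q$. If $\det g<0$, I compose with $\iota:z\mapsto -z$, which lies in $\hat\Gamma$ and preserves $\hat C$-orbits and $\Q$. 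So I may assume $\iota^\epsilon g\in G$, and similarly arrange $h$ to lie in $G$ up to a factor of $\iota$. The hypotheses $r\notin\Q$, $g(r)\notin\Q$ and $g(r)\notin \hat C r$ (so in particular $g(r)\notin Cr$) are exactly what \Cref{cor: approx. g_0...g_N extra conditions} requires with $N=1$, $g_0=I$, $g_1=g$. Taking target $h_0=I$ and $h_1=h$, it gives $\nu_s,\gamma_s\in\Gamma$ and $\eta_s=\xi_{r,t_s}$ with $t_s\to+\infty$, $\nu_s\eta_s\to I$ and $\gamma_s g\eta_s\to h$. Since $\gamma_s\to h\eta_s^{-1}g^{-1}$ up to a bounded error and $\lVert\eta_s^{-1}\rVert=O(t_s)$, we get $\lVert\gamma_s\rVert=O(t_s)$. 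It follows that $\gamma_s g(\eta_s z)\to hz$ locally uniformly on $\h$.

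\emph{Replacing $g$ by $F$.} This is the main obstacle. Set $w=\eta_s z$. For $z$ in a compact set $K\seq\h$, $\mu_r^{-1}z$ stays in a compact set and $u_{t_s}$ translates it by $t_s$, so $|w-r|=O(1/t_s)$ uniformly on $K$, with $|\im w|\asymp 1/t_s^2$. By Taylor's theorem, $F(w)-g(w)=O(|w-r|^3)=O(t_s^{-3})$. I then need to control the derivative of $\gamma_s$ near $g(w)$. Writing $\gamma_s=A_s g^{-1}$ with $A_s:=\gamma_s g$, we have $A_s=h\eta_s^{-1}+o(1)$. The derivative of $\gamma_s$ at a point $p=g(w)+O(t_s^{-3})$ is $g^{-1}{}'(p)\,A_s'(g^{-1}p)$, and $g^{-1}p=w+O(t_s^{-3})$. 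Now $\eta_s^{-1}$ maps the $O(1/t_s)$-neighbourhood of $r$ containing $w$ (which sits at height $\asymp t_s^{-2}$) back to a bounded region. Its derivative there has size $\asymp t_s^{2}$, since $\eta_s^{-1}$ is conjugate to $u_{-t}$ and $(\mu_r^{-1})'(w)\asymp |w-r|^{-2}\asymp t_s^2$. Moreover, the perturbation $O(t_s^{-3})$ is much smaller than the hyperbolic scale $t_s^{-2}$ at $w$. Hence $\gamma_s F(w)-\gamma_s g(w)=O(t_s^2\cdot t_s^{-3})=O(1/t_s)\to 0$ uniformly on $K$. This gives $\gamma_s F(\eta_s z)\to hz$.

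The delicate points are checking that the $O(t_s^{-3})$ error stays within a region where the M\"obius maps are controlled, and handling the sign/determinant bookkeeping. For the latter, I would replace $\gamma_s$ by $\iota\gamma_s$ or $\gamma_s\iota$ as needed, using that $\iota$ commutes appropriately with the normalisations. Both are routine once the scale estimates above are written out via $\mu_r$.
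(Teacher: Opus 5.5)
Your proposal is correct and follows essentially the paper's route: you take the osculating M\"obius map $g$, apply \Cref{cor: approx. g_0...g_N extra conditions} with $N=1$, $g_0=I$, $g_1=g$ (after the same sign normalisation), use the Taylor error $F(\eta_s z)-g(\eta_s z)=O(t_s^{-3})$, and bound $\gamma_s'$ by $O(t_s^2)$ near $g(\eta_s z)$. The paper obtains that derivative bound by differentiating $\gamma_s g\eta_s\to h$, bounding every $\gamma_s^{(i)}$ explicitly and summing the Taylor series of $\gamma_s$; your factorisation $\gamma_s=(\gamma_s g\eta_s)\circ\eta_s^{-1}\circ g^{-1}$ with a mean-value estimate on the segment is an equivalent shortcut, but state it multiplicatively, $\gamma_s g=h_s\eta_s^{-1}$ with $h_s:=\gamma_s g\eta_s\to h$, since the additive error $(h_s-h)\eta_s^{-1}$ is only $o(t_s)$, not $o(1)$ or bounded.
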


In this statement, for a matrix $g$ we let $\| g \|$ denote the maximum of the absolute values of its entries. We also remark that as $z$ varies in a compact subset of $\h$, for large $s$ we have $\eta_s(z) \in \Omega$, hence $F(\eta_s(z))$ is well defined.

We need the following lemma in the proof of this proposition.
\begin{lemma}\label{lem: bound on ||gamma_s||}
    Let $r \in \R$, $g, h \in G$, $\gamma_s\in \Gamma, \eta_s = \xi_{r,t_s}\in H_r, s \in \Z_{>0},$ with $t_s>0, t_s \to +\infty$. Assume that $\gamma_s g \eta_s$ converges to $h$ in $G$ as $s \to \infty$. Then $\| \gamma_s\| = O(t_s)$. 
\end{lemma}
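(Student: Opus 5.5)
The plan is to write $\gamma_s = (\gamma_s g \eta_s)\,\eta_s^{-1} g^{-1}$ and bound each factor in the max-entry norm. The max-entry norm is submultiplicative up to a constant: $\|AB\| \leq 2\|A\|\,\|B\|$ for $2\times 2$ matrices. Since we work in $G=\PSL_2(\R)$, matrices are only defined up to sign, but $\|\cdot\|$ is insensitive to sign, so this causes no trouble.

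\textbf{Step 1.} Since $\gamma_s g \eta_s \to h$ in $G$, the representatives $\pm\gamma_s g\eta_s$ converge to $\pm h$, choosing signs appropriately. Hence $\|\gamma_s g \eta_s\|$ is bounded, say by $M$, for all $s$.

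\textbf{Step 2.} The factor $g^{-1}$ is fixed, so $\|g^{-1}\|$ is a constant.

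\textbf{Step 3.} Compute $\eta_s^{-1} = \xi_{r,-t_s}$ explicitly from the formula for $\xi_{r,t}$:
\[ \eta_s^{-1} = \begin{pmatrix} 1 + rt_s & -r^2 t_s \\ t_s & 1 - r t_s \end{pmatrix}. \]
Its entries are bounded by $1 + \max(1,|r|,r^2)\,t_s$. Since $t_s \to +\infty$, this gives $\|\eta_s^{-1}\| = O(t_s)$.

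\textbf{Step 4.} Combining the three bounds,
\[ \|\gamma_s\| \leq 4\,\|\gamma_s g\eta_s\|\,\|\eta_s^{-1}\|\,\|g^{-1}\| \leq 4M\,\|g^{-1}\|\,O(t_s) = O(t_s). \]

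There is no real obstacle here. The only point needing care is the sign ambiguity in $\PSL_2$ when passing from convergence in $G$ to boundedness of matrix entries, and that is resolved by lifting to $\SL_2(\R)$, where convergence in the quotient gives convergence of suitably signed lifts.
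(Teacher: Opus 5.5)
Your proposal is correct and follows the paper's proof: factor $\gamma_s = (\gamma_s g\eta_s)\,\eta_s^{-1}g^{-1}$, then apply $\|AB\|\le 2\|A\|\,\|B\|$ using $\|\gamma_s g\eta_s\| = O(1)$ and $\|\eta_s^{-1}\| = O(t_s)$. Your explicit computation $\eta_s^{-1}=\xi_{r,-t_s}$ and your remark on the sign ambiguity in $\PSL_2$ fill in details that the paper leaves implicit.
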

\begin{proof}
    We use the inequality $\|g_1g_2\| \leq 2\|g_1\| \cdot \|g_2\|$. Observe that $\| \eta_s \| = O(t_s)$ and $\| \eta_s^{-1} \| = O(t_s)$. Since $\gamma_s g \eta_s \to h$, we also have $\| \gamma_s g \eta_s\| = O(1)$. Hence,
    \[  \| \gamma_s\| = \| (\gamma_s g \eta_s) \cdot (\eta_s^{-1} g^{-1}) \| \leq 2 \| \gamma_s g \eta_s\| \cdot 2\| \eta_s^{-1}\| \cdot \|g^{-1} \| = O(t_s). \qedhere \] 
\end{proof}

\begin{proof}[Proof of \Cref{prop: gamma F eta to h with  conditions}]

    First, we observe that it can be assumed without loss of generality that $F'(r) > 0$ and $h \in G$. When $F'(r) < 0$, we can replace $F$ by $-F$, and when $\det(h) < 0$ we can replace $h$ by $\begin{pmatrix}
        1 & 0 \\ 0 & -1 \end{pmatrix} h$.
    
    Since $F'(r)>0$, there is a unique $g \in G$ satisfying 
    \begin{equation}\label{eqn: approx F with g}
        F(r) = g(r), F'(r) = g'(r), F''(r) = g''(r).
    \end{equation}
    Explicitly, \[ g(z) = F(r)+\frac{F'(r)(z-r)}{1-\frac{F''(r)}{2F'(r)}(z-r)}.\]
    
    The conditions on $F$ guarantee that $g(r) \notin \hat{C}r \cup \p_1(\Q)$. Applying \Cref{cor: approx. g_0...g_N extra conditions} with $N=1, g_1=g, h_0=I, h_1=h$ supplies sequences $\eta_s \in H_r$,  $\gamma_s := \gamma_{1,s} \in \Gamma$, and $\nu_s := \gamma_{0,s}\in \Gamma$ such that
    \begin{itemize}
        \item $\eta_s = \xi_{r,t_s} \in H_r$ with $t_s \to +\infty$;
        \item $\nu_s \eta_s \to I$;
        \item $\gamma_{s} g \eta_s \to h$ as $s \to \infty$.
    \end{itemize}

    We now estimate $\eta_s(z) - r$. Fix a compact set $K \seq \h$ and let $z$ vary in $K$.
    \begin{claim}
      We have  $\eta_s(z) - r = O_K(t_s^{-1})$.
    \end{claim}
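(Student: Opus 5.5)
The claim should follow from an explicit computation with the formula $\eta_s = \xi_{r,t_s} = \mu_r u_{t_s} \mu_r^{-1}$. Writing $\xi_{r,t} = \begin{pmatrix} 1-rt & r^2 t \\ -t & 1+rt \end{pmatrix}$, we get
\[ \xi_{r,t}(z) - r = \frac{(1-rt)z + r^2 t - r(-tz + 1 + rt)}{-tz+1+rt} = \frac{z - r}{1 - t(z-r)}. \]
The plan is to derive this identity first, by direct expansion: the terms $-rtz$ and $rtz$ cancel, as do $r^2t$ and $-r^2t$. It is the standard fact that conjugating a parabolic fixing $r$ gives $\frac{1}{\eta(z)-r} = \frac{1}{z-r} - t$. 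The claim then reduces to a lower bound on the denominator $|1 - t(z-r)|$.

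For the bound, fix the compact set $K \seq \h$. Let $\delta := \min_{z \in K} |\im z| > 0$ and $M := \max_{z\in K}|z-r| < \infty$. Then
\[ |1 - t(z-r)| \geq |\im(t(z-r))| = t|\im z| \geq t\delta \quad \text{for } t>0. \]
Hence
\[ |\eta_s(z) - r| \leq \frac{M}{\delta\, t_s} \]
for all $z\in K$ and all $s$ with $t_s>0$. Since $t_s \to +\infty$, this holds for all sufficiently large $s$, which is exactly $\eta_s(z)-r = O_K(t_s^{-1})$. The same estimate also recovers \Cref{lem: xi_s from H_r to r} quantitatively.

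There is no real obstacle here. The only point requiring care is getting the algebraic identity right, including the sign in the denominator, and using the imaginary part, not the real part, to bound the denominator away from zero. This choice is what makes the bound uniform on $K$ and independent of where $\re(z-r)$ lies. The estimate will then be fed, together with the Taylor expansions of $F$ and $g$ agreeing to second order at $r$, into the comparison $\gamma_s F(\eta_s z)$ versus $\gamma_s g(\eta_s z)$. There the $O(t_s^{-3})$ discrepancy $F(\eta_s z) - g(\eta_s z)$ is magnified by $\lVert\gamma_s\rVert^2 = O(t_s^2)$ from \Cref{lem: bound on ||gamma_s||}, and so still tends to $0$.
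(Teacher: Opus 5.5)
Your argument is correct and is essentially the paper's: both compute $\xi_{r,t}(z)-r$ explicitly and bound the denominator via $|1+rt-tz|\ge t\,|\im z|$. The only difference is that the paper writes $\xi_{r,t}z-r=\frac{1}{t(1+rt-tz)}-\frac{1}{t}$ and gets the bound $\frac{1}{t^2|\im z|}+\frac{1}{t}$, whereas your exact identity $\frac{z-r}{1-t(z-r)}$ gives $\frac{M}{\delta t}$ (your closing aside is slightly off, though: the $O(t_s^2)$ magnification factor is $|c_s g(\eta_s z)+d_s|^{-2}$, which the paper bounds using the convergence $\gamma_s g\eta_s\to h$, not the upper bound $\lVert\gamma_s\rVert=O(t_s)$, which only controls the factors $c_s^{i-1}$ in the higher-order terms).
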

    \begin{claimproof}
           For $z \in K$ we have
     \begin{align*}
        \left| \xi_{r,t} z - r \right| & =   \left|\frac{(1-rt) z + r^2t}{-t z + (1+rt)} + \frac{1-rt}{t} - \frac{1}{t}\right|    \\ 
        & \leq  \frac{1}{|-t z + (1+rt)| t} + \frac{1}{t}  \leq   \frac{1}{t^2 |\im(z)|}+\frac{1}{t} = O_K\left( \frac{1}{t} \right). 
    \end{align*}
    
    Thus, $\eta_s(z) - r = O_K(t_s^{-1})$.
    \end{claimproof}

    Next we observe that $F(\eta_s z)$ is asymptotically very close to $g(\eta_s z)$.

    \begin{claim}\label{claim: F-g bound}
        There is a constant $B_K$ such that $|F(\eta_s z) - g(\eta_s z)| \leq B_K t_s^{-3}$ for all $z \in K$.
    \end{claim}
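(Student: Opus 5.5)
The plan is a third-order Taylor comparison at $r$, combined with the Claim just proved, $\eta_s(z)-r = O_K(t_s^{-1})$.

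\textbf{Step 1: set-up.} Both $F$ and $g$ are holomorphic in a fixed small disc $D_\rho$ around $r$ contained in $\Omega$. For $g$ this holds because its pole sits at $r + 2F'(r)/F''(r)$, which is either at a positive distance from $r$ or absent when $F''(r)=0$. Set $\phi := F - g$ on $D_\rho$.

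\textbf{Step 2: vanishing to third order.} By \eqref{eqn: approx F with g}, $\phi(r)=\phi'(r)=\phi''(r)=0$. The Taylor expansion of $\phi$ at $r$ therefore starts at the cubic term, so $\phi(w) = (w-r)^3\psi(w)$ with $\psi$ holomorphic on $D_\rho$. On the closed disc $\bar{D}_{\rho/2}$ we get the bound $|\psi|\le M$. One way to see this is via the Cauchy estimates, which give
\[ |\phi(w)| \le \frac{\sup_{D_\rho}|\phi|}{\rho^3}\cdot\frac{|w-r|^3}{1-|w-r|/\rho}. \]

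\textbf{Step 3: plug in the Claim.} By the previous Claim there is $A_K$ with $|\eta_s(z)-r|\le A_K t_s^{-1}$ for all $z\in K$. Since $t_s\to\infty$, for all $s$ beyond some $s_0$ we have $\eta_s(K)\subseteq \bar{D}_{\rho/2}$. For such $s$,
\[ |F(\eta_s z)-g(\eta_s z)| \le M A_K^3 t_s^{-3}. \]
The finitely many $s\le s_0$ do not matter: either enlarge $B_K$ to cover them, or (as is implicit in the statement) read the inequality as holding for all sufficiently large $s$.

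\textbf{Main obstacle.} The only delicate point is uniformity. We need a single disc on which both $F$ and $g$ are holomorphic and bounded, with $g$'s pole excluded, and we need $\eta_s(K)$ to land inside that disc uniformly in $z\in K$. Both follow from the Claim together with $t_s\to+\infty$, so no further input is required.
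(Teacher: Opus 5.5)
Your proof is correct and is essentially the paper's argument: a third-order Taylor comparison at $r$ combined with $\eta_s(z)-r=O_K(t_s^{-1})$. The paper expands $F$ and $g$ separately with cubic remainders and subtracts using \eqref{eqn: approx F with g}, whereas you factor $\phi=F-g=(w-r)^3\psi$ directly; your handling of the pole of $g$ and of the ``sufficiently large $s$'' caveat just makes explicit what the paper leaves implicit.
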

    \begin{claimproof}
    By looking at Taylor expansions, we get
    \begin{align*}
        F(\eta_s z) - F(r) = F'(r) (\eta_s z - r) + \frac{F''(r)}{2} \cdot (\eta_s z - r) ^2 + O_K(|\eta_s z - r|^3),
    \end{align*}
    and 
    \begin{align*}
        g(\eta_s z) - g(r) = g'(r) (\eta_s z - r) + \frac{g''(r)}{2} \cdot (\eta_s z - r) ^2 + O_K(|\eta_s z - r|^3).
    \end{align*}

    Hence, by \eqref{eqn: approx F with g},
    \[ F(\eta_s z) - g(\eta_s z) =  O_K (t_s^{-3}). \qedhere\]
    \end{claimproof}

    Let $\gamma_s := \begin{pmatrix} a_s & b_s \\ c_s & d_s  \end{pmatrix}$. By \Cref{lem: bound on ||gamma_s||}, $\| \gamma_s \| = O(t_s)$, so $a_s, b_s, c_s, d_s$ are all $O(t_s)$.

    \begin{claim}\label{claim: gamma_s^i bound}
         There is a constant $A_K$ such that for all $z \in K$ and for all $s, i \in \Z_{>0}$ \[ |\gamma_s^{(i)} (g (\eta_sz))| \leq  i! A_K^i t_s^{2i}. \]
    \end{claim}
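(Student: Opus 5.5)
The plan is to compute the derivatives of a Möbius map explicitly. For $\gamma_s = \begin{pmatrix} a_s & b_s \\ c_s & d_s \end{pmatrix}$ with determinant $1$, we have $\gamma_s(w) = \frac{a_s}{c_s} - \frac{1}{c_s(c_s w + d_s)}$ when $c_s \neq 0$. Differentiating gives the closed form
\[ \gamma_s^{(i)}(w) = \frac{(-1)^{i+1}\, i!\, c_s^{i-1}}{(c_s w + d_s)^{i+1}}, \qquad i \geq 1, \]
which also holds when $c_s = 0$: then $\gamma_s$ is affine, and only the $i=1$ term $1/d_s^2$ survives, since $c_s^{i-1}=0$ for $i\ge 2$. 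So it suffices to bound $|c_s|^{i-1}$ from above and $|c_s w + d_s|$ from below, where $w = g(\eta_s z)$ with $z \in K$.

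For the numerator, \Cref{lem: bound on ||gamma_s||} gives $|c_s| = O(t_s)$, hence $|c_s|^{i-1} \leq (\text{const})^{i-1} t_s^{i-1}$.

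The denominator is the main point. I would use the identity $\im(\gamma w) = \im(w)/|cw+d|^2$ for determinant-one matrices. Apply it to $w_s := g(\eta_s z)$: then $|c_s w_s + d_s|^2 = \im(w_s)/\im(\gamma_s w_s)$. Since $\gamma_s g \eta_s \to h$ in $G$ and $K$ is compact, $\gamma_s w_s = (\gamma_s g\eta_s)(z)$ converges to $h(z)$ uniformly on $K$, so $\im(\gamma_s w_s)$ is bounded above uniformly for $z\in K$ and large $s$. We also need a lower bound for $\im(w_s) = \im(g\eta_s z)$. Using $\im(\eta_s z) = \im(z)/|{-t_s z} + 1 + rt_s|^2 \geq c_K t_s^{-2}$ and the fact that $g$ is a fixed element of $G$ with bounded distortion near $r$ (because $\eta_s z \to r$, $|g'| $ is bounded below near $r$, and $\im(gu) = \im(u)/|c_g u + d_g|^2$ with the denominator bounded near $r$), we get $\im(w_s) \geq c'_K t_s^{-2}$. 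Therefore $|c_s w_s + d_s| \geq c''_K t_s^{-1}$, and so $|c_sw_s+d_s|^{-(i+1)} \leq (c''_K)^{-(i+1)} t_s^{i+1}$.

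Combining the two bounds gives $|\gamma_s^{(i)}(w_s)| \leq i!\, C^{i-1} (c''_K)^{-(i+1)} t_s^{2i}$, which has the form $i! A_K^i t_s^{2i}$ for a suitable $A_K$, chosen large enough to absorb the extra factor $(c''_K)^{-2}$ using $i \geq 1$. The bound is uniform in $s$ once finitely many small $s$ are handled by enlarging $A_K$. For each such fixed $s$ the expression is continuous on $K$, given that $w_s$ stays off the pole $-d_s/c_s \in \R$; this holds because $w_s \in \uh$. The step most likely to need care is the lower bound on $\im(g\eta_s z)$, which must hold uniformly on $K$; it is a direct computation from the explicit form of $\xi_{r,t}$.
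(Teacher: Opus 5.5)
Your argument is correct and is essentially the paper's: you use the same closed form for $\gamma_s^{(i)}$, the bound $|c_s|=O(t_s)$ from \Cref{lem: bound on ||gamma_s||}, and the key estimate $|c_s g(\eta_s z)+d_s|^{-1}=O(t_s)$. The paper gets that estimate by differentiating $\gamma_s g\eta_s\to h$ and using $\eta_s'(z)=(-t_sz+1+rt_s)^{-2}$, whereas you get it from $\im(\gamma w)=\im(w)/|cw+d|^2$; the two are equivalent, since $|\gamma'(w)|=\im(\gamma w)/\im(w)$ for $\gamma\in\PSL_2(\R)$. The only adjustment needed is that $K\seq\h$ may meet $\lh$, so you should write $|\im(\cdot)|$ throughout and use $w_s\in\h$ (not $\uh$) to stay away from the real pole $-d_s/c_s$.
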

   \begin{claimproof}
          Since $\gamma_s (g (\eta_s z))$ converges uniformly to $h(z)$ in an open neighbourhood of $K$, we can differentiate and get
    \[ \gamma'_s (g(\eta_s z)) \cdot g'(\eta_s z) \cdot \eta_s' (z) \to h'(z) \text{ for } z \in K.  \]

    Since $\eta_s = \xi_{r,t_s}$ we can compute its derivative: $\eta'_s (z) = \frac{1}{(-t_s z + 1+rt_s)^2 }.$ Taking into account that $g'$ is continuous at $r$ and, therefore, $g'(\eta_s z) \to g'(r) \neq 0$ we conclude
    \begin{equation}\label{eqn: gamma' ( f eta z))}
        \gamma'_s (g (\eta_s z)) \sim \frac{h'(z)}{g'(r)} \cdot (-t_s z + 1+rt_s)^2 = O(t_s^2).
    \end{equation}

   Since $\gamma_s' (w) = (c_sw+d_s)^{-2}$, \eqref{eqn: gamma' ( f eta z))} implies
   \begin{equation}\label{eqn: c_sg+d_s bound}
       |c_s g(\eta_s z) + d_s|^{-1} = O(t_s).
   \end{equation}

    Finally, for $i \in \Z_{>0}$ we have $\gamma_s^{(i)} (w) = (-1)^{i-1} i! (c_sw+d_s)^{-(i+1)} c_s^{i-1}$, so
    \[ |\gamma_s^{(i)} (g (\eta_sz))| = i! |c_s (g (\eta_sz)) + d_s|^{-(i+1)} |c_s|^{i-1} =i! O(t_s)^{2i}. \qedhere \]
   \end{claimproof}

   \bigskip

    Note that \eqref{eqn: c_sg+d_s bound} and \Cref{claim: F-g bound} ensure that $F(\eta_sz)$ is within the radius of convergence of the Taylor expansion of $\gamma_s(w)$ at $w=g(\eta_s z)$ (for large $s$). So, we conclude
    \begin{align*}
        & |\gamma_s(F(\eta_sz)) - \gamma_s(g(\eta_s z)) |  \leq  \sum_{i=1}^\infty \frac{|\gamma^{(i)}_s(g(\eta_s z))|}{i!}\cdot |F(\eta_s z) - g(\eta_s z)|^i \\
                                                        & \leq \sum_{i=1}^\infty A_K^i t_s^{2i}\cdot B_K^i t_s^{-3i} 
                                                        = \sum_{i=1}^\infty (A_KB_K t_s^{-1})^i = \frac{A_KB_Kt_s^{-1}}{1 - A_KB_Kt_s^{-1}}
    \end{align*}
    which tends to $0$ as $s\to \infty$.  Since $\gamma_s(g(\eta_s z))\to hz$ uniformly in $K$, it follows that $\gamma_{s} (F (\eta_s z )) \to hz$ uniformly in $K$.
\end{proof}

\begin{theorem}\label{thm: gamma F eta to h}
 Let $\Omega \seq \C$ be a complex domain with $\Omega \cap \R \neq \emptyset$.   Let $F: \Omega \to \C$ be a holomorphic function such that $F(\Omega \cap \R) \seq \R$. Assume $F$ is non-constant and is not a linear fractional transformation with rational coefficients. Then for every $r \in \Omega \cap \R$ and for every $h \in \hat{G}$ there are sequences $\gamma_s, \zeta_s \in \hat{\Gamma}, s \in \Z_{>0},$ with the following properties:
 \begin{itemize}
     \item $\zeta_s(z) \to r$  locally uniformly for $z$ in $\h$ as $s\to \infty$;
     \item $\gamma_s F(\zeta_s z) \to hz$ locally uniformly in $\h$ as $s\to \infty$.
 \end{itemize} 
\end{theorem}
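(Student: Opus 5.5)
Theorem~\ref{thm: gamma F eta to h} follows from \Cref{prop: gamma F eta to h with  conditions} by a diagonal approximation argument. The idea is to approximate the arbitrary real point $r$ by good points $r'$ at which the proposition applies, and then pass to a diagonal sequence.

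\textbf{Step 1: good points are dense.} Let
\[ \mathcal{G}:=\{ r' \in \Omega\cap(\R\setminus\Q): F'(r')\neq 0,\ F(r')\notin \hat{C}r'\cup\Q\}. \]
We show that $\mathcal{G}$ is dense in $\Omega\cap\R$ near $r$.
\begin{itemize}
    \item Since $F$ is non-constant and holomorphic, the zeros of $F'$ are isolated.
    \item For each fixed $g\in\hat{C}$, the set $\{r': F(r')=gr'\}$ is discrete near any compact real interval. Otherwise the identity theorem (applied to $F-g$ on a connected neighbourhood of the real interval) gives $F=g$ as functions, which is excluded because $F$ is not a rational linear fractional transformation. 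Poles of $g$ do not matter: we work where $g$ is finite, or compare $F(cz+d)$ with $az+b$.
    \item For each $q\in\Q$, the set $F^{-1}(q)$ is discrete.
\end{itemize}
Since $\hat{C}$ and $\Q$ are countable, the complement of $\mathcal{G}$ in a small real interval $J\ni r$ is a countable union of closed nowhere dense sets, together with $\Q$. By Baire (or simply countability, since each bad set is countable), $\mathcal{G}\cap J$ is dense. So we pick $r_n\in\mathcal{G}$ with $r_n\to r$.

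\textbf{Step 2: apply the proposition and diagonalise.} For each $n$, \Cref{prop: gamma F eta to h with  conditions} applied at $r_n$ with the given $h$ yields $\gamma^{(n)}_s\in\hat{\Gamma}$ and $\eta^{(n)}_s=\xi_{r_n,t_s}$ such that $\gamma^{(n)}_sF(\eta^{(n)}_sz)\to hz$ and $\eta^{(n)}_s z\to r_n$ locally uniformly. The remaining issue is that $\eta^{(n)}_s\in H_{r_n}\subseteq G$, not in $\hat{\Gamma}$. This is where the condition $\nu_s\eta_s\to I$ is used: set $\zeta_s:=\nu_s^{-1}$. Writing $\nu_s\eta_s=\epsilon_s$ with $\epsilon_s\to I$, we get $\zeta_s z=\eta_s\epsilon_s^{-1}z$.
\begin{itemize}
    \item Since $\epsilon_s^{-1}\to I$ uniformly on compacta of $\h$, $\epsilon_s^{-1}K$ stays in a slightly larger compact set $K'$ for large $s$. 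Hence $\zeta_s z\to r_n$ locally uniformly by \Cref{lem: xi_s from H_r to r}.
    \item Similarly, $\gamma_sF(\zeta_sz)=\gamma_sF(\eta_s(\epsilon_s^{-1}z))$. Local uniform convergence of $\gamma_sF(\eta_s\cdot)$ to $h$ on $K'$, together with equicontinuity and $\epsilon_s^{-1}z\to z$ uniformly on $K$, gives convergence to $h(z)$.
\end{itemize}
Thus for each $n$ we have sequences in $\hat{\Gamma}$ doing the job at $r_n$.

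We then take an exhaustion $K_1\subseteq K_2\subseteq\cdots$ of $\h$ by compacta. For each $n$ we choose $s(n)$ so that on $K_n$:
\[ |\zeta^{(n)}_{s(n)}z-r_n|<1/n \quad\text{and}\quad |\gamma^{(n)}_{s(n)}F(\zeta^{(n)}_{s(n)}z)-hz|<1/n. \]
Setting $\zeta_n:=\zeta^{(n)}_{s(n)}$ and $\gamma_n:=\gamma^{(n)}_{s(n)}$ gives the theorem, because $r_n\to r$.

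\textbf{Main obstacle.} The delicate step is Step 1, namely showing that the bad set is small. The identity-theorem argument needs some care: $F$ is only defined on $\Omega$, which need not be connected along the real line, and we must restrict to the connected component containing a real interval around $r$. We also need the hypothesis that $F$ is not a rational Möbius map on that component, which is how the statement should be read. Everything else is routine bookkeeping for the diagonal argument.
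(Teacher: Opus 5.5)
Your proof is correct and follows essentially the same route as the paper. You show density of the good points $r'$ (with $F'(r')\neq 0$ and $F(r')\notin\hat{C}r'\cup\Q$) because the bad set is countable, apply \Cref{prop: gamma F eta to h with  conditions} at good points $r_n\to r$, diagonalise over a compact exhaustion of $\h$, and replace $\eta_s$ by $\zeta_s:=\nu_s^{-1}$ using $\nu_s\eta_s\to I$. The only difference is bookkeeping: you do the replacement at each fixed $r_n$ before diagonalising, whereas the paper diagonalises first, with the quantitative bound $\|\nu_s\eta_s-I\|<s^{-1}$, and replaces afterwards.
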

\begin{proof} First, we establish the following claim.
\begin{claim}
    The set $R:= \{ r \in \Omega \cap (\R \setminus \Q): F'(r) \neq 0, F(r) \notin \hat{C}r \cup \Q \}$ is dense in $\Omega\cap \R$. 
\end{claim}
\begin{claimproof}
     Since $F$ is non-constant, $F'$ does not identically vanish, so it vanishes at most at countably many points. For any fixed $g \in \hat{C}$ the set $\{ x: F(x)=g(x) \}$ is at most countable, for $F$ is not identically equal to $g$, so the set \[ \{ x \in \Omega \cap \R: \exists g \in \hat{C} \text{ such that } F(x) = g(x)\} \] is also countable. Since $F$ is non-constant, $F^{-1}(\Q)$ is countable too. Therefore, the complement of $R$ in $\Omega \cap \R$ is countable, so $R$ is dense in $\Omega\cap \R$.
\end{claimproof}

 Now let $r$ be an arbitrary point in $\Omega \cap \R$ and pick $r_s \in R$ such that $r_s \to r$ as $s \to \infty$. Fix a compact set $K \seq \h$. By \Cref{prop: gamma F eta to h with  conditions}, for each $s$ there are $\gamma^K_s, \nu^K_s \in \hat{\Gamma}$ and  $\eta^K_s = \xi_{r_s, t_s} \in H_{r_s}$ such that 
\begin{itemize}
    \item $|\eta^K_s(z) - r_s| < s^{-1}$ for all $z \in K$;
    \item $\|\nu^K_s \eta^K_s - I\| < s^{-1}$;
    \item $|\gamma^K_s(F(\eta^K_s z)) - h(z)| < s^{-1}$ for all $z \in K$.
\end{itemize}

Now let $K_1 \subseteq K_2 \subseteq \ldots$ be an increasing chain of compact subsets of $\h$ such that each $K_i$ is contained in the interior of $K_{i+1}$ and $\bigcup_{s=1}^{\infty} K_s = \h$. Let $\eta_s := \eta^{K_s}_s, \nu_s := \nu^{K_s}_s, \gamma_s := \gamma^{K_s}_s$. Since any compact subset of $\h$ must be contained in one of the $K_s$'s, the sequence $\gamma_{s} (F(\eta_s (z)))$ approaches $hz$ locally uniformly in $\h$, and $\eta_s(z)$ tends to $r$ locally uniformly. 

To finish the proof, we just observe that these statements remain valid if we replace $\eta_s$ with $\zeta_s := \nu_s^{-1}$. Indeed, for large $s$, the matrix $\nu_s \eta_s$ is close to $I$, so as $z$ varies in a compact set $K$, the image $(\nu_s \eta_s)^{-1}(z)$ varies in a slightly larger compact set $K' \seq \h$. Therefore, $\zeta_s(z) = \eta_s \left( (\nu_s \eta_s)^{-1} z \right) $ tends uniformly to $r$.

Similarly, $\gamma_s(F(\zeta_s z)) = \gamma_s(F(\eta_s \left( (\nu_s \eta_s)^{-1} z \right)))$ converges uniformly to $hz$.
\end{proof}


\subsection{The general case}\label{subsec: density general case}

In this section we prove \Cref{thm: density of analytic orbits general version}, but we look at another special case first.

\begin{proposition}\label{prop: gamma F_1...F_N eta to h with conditions single variable}
     Let $\Omega \seq \C$ be a complex domain with $\Omega \cap \R \neq \emptyset$.   Let $F_1, \ldots,F_N: \Omega \to \C$ be holomorphic functions such that $F_k(\Omega \cap \R) \seq \R$ for every $k$. Assume $r \in \Omega \cap (\R \setminus \Q)$ is such that $F_k'(r) \neq 0$, $F_k(r) \notin \hat{C}r \cup \Q$ for all $k$, and $F_k(r) \notin \hat{C}F_l(r)$ for all $k \neq l$. Then for any $h_1, \ldots, h_N \in \hat{G}$ there are sequences $\gamma_{k,s}, \nu_s\in \hat{\Gamma}, \eta_s \in H_r, k=1, \ldots, N, s \in \Z_{>0},$ with the following properties:
 \begin{itemize}
        \item $\eta_s = \xi_{r,t_s}$ with $t_s \to +\infty$ as $s \to \infty$;
        \item $\nu_s \eta_s \to I$;
        \item $\lVert \gamma_{k,s} \rVert = O( t_{s} )$;
        \item for all $k$, as $s \to \infty$, we have $\gamma_{k,s} (F_k (\eta_s z)) \to h_k z$ locally uniformly on $\h$.
 \end{itemize} 
\end{proposition}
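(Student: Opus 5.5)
The plan is to run the proof of \Cref{prop: gamma F eta to h with  conditions} verbatim for each $F_k$ separately, the only change being that all $N$ functions must share one sequence $\eta_s$. That sharing is exactly what \Cref{cor: approx. g_0...g_N extra conditions} provides once its hypotheses are checked. First I reduce to the case $F_k'(r)>0$ and $h_k\in G$ for every $k$. If $F_k'(r)<0$ I replace $F_k$ by $-F_k$, and if $\det h_k<0$ I replace $h_k$ by $\mathrm{diag}(1,-1)h_k$. Both changes are absorbed into $\gamma_{k,s}\in\hat\Gamma$ and leave the hypotheses intact, because $\hat C$ and $\Q$ are stable under $x\mapsto -x$. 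For each $k$ I then take the unique $g_k\in G$ that osculates $F_k$ to second order at $r$, as in \eqref{eqn: approx F with g}, so that $g_k(r)=F_k(r)$.

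The main step is to apply \Cref{cor: approx. g_0...g_N extra conditions} with $g_0=I$, the matrices $g_1,\dots,g_N$, and targets $h_0=I,h_1,\dots,h_N$. Its hypotheses ask that the points $r,g_1(r),\dots,g_N(r)=r,F_1(r),\dots,F_N(r)$ lie outside $\p_1(\Q)$ and in pairwise distinct $C$-orbits. Three of these conditions are given directly: $r\notin\Q$, $F_k(r)\notin\Q$, and $F_k(r)\notin\hat C r$. The last one gives $F_k(r)\notin Cr$, since $C\seq\hat C$. Likewise $F_k(r)\notin \hat C F_l(r)$ gives distinct $C$-orbits for $k\neq l$. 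The corollary then produces $\eta_s=\xi_{r,t_s}$ with $t_s\to+\infty$, a sequence $\nu_s:=\gamma_{0,s}$ with $\nu_s\eta_s\to I$, and $\gamma_{k,s}$ with $\gamma_{k,s}g_k\eta_s\to h_k$. Because $N$ and the $g_k$ are fixed, \Cref{lem: bound on ||gamma_s||} applied to each $k$ gives $\|\gamma_{k,s}\|=O(t_s)$.

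It remains to pass from $g_k$ to $F_k$, which I do one $k$ at a time exactly as in the proof of \Cref{prop: gamma F eta to h with  conditions}. For a compact $K\seq\h$ we have $\eta_s z-r=O_K(t_s^{-1})$. Second-order osculation then gives $|F_k(\eta_s z)-g_k(\eta_s z)|\le B_{K,k}t_s^{-3}$. Differentiating the convergence $\gamma_{k,s}g_k\eta_s\to h_k$ gives $|c_{k,s}g_k(\eta_s z)+d_{k,s}|^{-1}=O(t_s)$, and hence the derivative bounds $|\gamma_{k,s}^{(i)}(g_k(\eta_s z))|\le i!A_{K,k}^it_s^{2i}$. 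Summing the Taylor series of $\gamma_{k,s}$ around $g_k(\eta_s z)$ then shows that $\gamma_{k,s}F_k(\eta_s z)-\gamma_{k,s}g_k(\eta_s z)=O(t_s^{-1})$ uniformly on $K$. Since $\gamma_{k,s}g_k\eta_s\to h_k$, this proves $\gamma_{k,s}F_k(\eta_s z)\to h_kz$ locally uniformly, and taking the maximum over the finitely many $k$ keeps all constants uniform.

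I expect no genuinely new obstacle. The substantive input is the simultaneous density of \Cref{prop: orbit closure irrational N}, which is already packaged in the corollary. The only delicate point is checking that the $\hat C$-orbit hypotheses of the statement imply the $C$-orbit hypotheses the corollary needs, including after the sign normalisation.
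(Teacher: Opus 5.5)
Your proposal is correct and follows the paper's argument. Like the paper, you approximate each $F_k$ to second order at $r$ by a M\"obius map $g_k$, apply \Cref{cor: approx. g_0...g_N extra conditions} with $g_0=I$ (noting that the $\hat C$-orbit hypotheses yield the required $C$-orbit ones, also after sign normalisation), and pass from $g_k$ to $F_k$ via the Taylor estimates in the proof of \Cref{prop: gamma F eta to h with  conditions}; the paper states this in one line and you fill in the details.
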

\begin{proof}
    This can be proven exactly as \Cref{prop: gamma F eta to h with  conditions} by approximating each $F_k$ by a M\"obius transformation $g_k$ up to second order. The conditions on the values $F_k(r)$ allow us to apply \Cref{cor: approx. g_0...g_N extra conditions}.
\end{proof}

We now restate the main theorem for the convenience of the reader.

  \setcounter{theorem}{0} 
  \renewcommand{\thetheorem}{\ref{thm: density of analytic orbits general version}}
  \begin{theorem}
      Let $F_1, \ldots, F_N$ be non-constant holomorphic functions of variables $z_1, \ldots, z_m$ defined on some open domain $\Omega \seq \C^m$ with $\Omega \cap \R^m \neq \emptyset$. Let $B \seq \Omega \cap \R^m$ be a non-empty open box. Assume the functions $z_1, \ldots, z_m, F_1, \ldots, F_N$ are in distinct $\hat{C}$-orbits, and $F_k(B) \seq \R$ for all $k$. Then there is a map $\sigma: \{ 1, \ldots, N \} \to \{ 1, \ldots, m \} $ with the following property: for any $\rbar:=(r_1, \ldots, r_m) \in B$ and for any $h_k \in \hat{G}~ (1 \leq k \leq N)$ there are sequences of matrices $(\gamma_{k,s})_s, (\zeta_{l,s})_s \seq \hat{\Gamma}, 1\leq k \leq N, 1\leq l \leq m,$ such that  
    \begin{itemize}
        \item for all $l$, as $s \to \infty$, we have $\zeta_{l,s} z \to r_l$ locally uniformly for $z \in \h$;

        \item for all $k$, as $s \to \infty$, we have $\gamma_{k,s} F_k(\zeta_{1,s} z_1, \ldots, \zeta_{m,s} z_m ) \to h_k z_{\sigma (k)}$  locally uniformly on $\h^m$.
    \end{itemize}
  \end{theorem}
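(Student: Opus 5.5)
We will argue by induction on the number of variables $m$, following the strategy outlined in the introduction. The case $m=1$ reduces to \Cref{prop: gamma F_1...F_N eta to h with conditions single variable}, with $\sigma$ constant equal to $1$, combined with the density argument from the proof of \Cref{thm: gamma F eta to h}. Given $r\in B$, we pick nearby points $r_s$ at which all $F_k'(r_s)\neq 0$, $F_k(r_s)\notin \hat{C}r_s\cup\Q$, and $F_k(r_s)\notin \hat{C}F_l(r_s)$ for $k\neq l$. Such points are dense because $F_k\neq gF_l$ and $F_k\neq gz$ for every $g\in\hat{C}$, so each exceptional set is countable. We then run the diagonal argument over an exhaustion by compacts $K_s$ and replace $\eta_s$ by $\nu_s^{-1}$ as before.

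\textbf{Choosing $\sigma$.} For general $m$, we first define $\sigma$. For each $k$, let $\sigma(k)$ be an index $l$ such that, for a generic real choice of the remaining coordinates, the one-variable function $z_l\mapsto F_k(\dots,z_l,\dots)$ is non-constant and not in $\hat{C}z_l$. The independence hypotheses should guarantee that such an $l$ exists. If $F_k$ were of the form $g_{\cbar}z_l$ with $g_{\cbar}\in\hat{C}$ for every $l$ it depends on and for generic frozen values $\cbar$, then countability of $\hat{C}$ and continuity would force $g$ to be locally constant. This would make $F_k=gz_l$ identically, contradicting $\hat{C}$-independence.

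\textbf{Construction of the sequences.} We order the variables so that we handle $z_1$ first, using the functions $k$ with $\sigma(k)=1$. Fix $\rbar\in B$; by perturbing $\rbar$ and diagonalising as in \Cref{thm: gamma F eta to h}, we may assume it is generic. Set $\Phi_k(z_1):=F_k(z_1,r_2,\dots,r_m)$ for $\sigma(k)=1$. These are real on reals, and after perturbing $r_2,\dots,r_m$ they are pairwise $\hat{C}$-independent and not in $\hat{C}z_1$, again by a countability argument. \Cref{prop: gamma F_1...F_N eta to h with conditions single variable} then gives $\eta_{1,s}=\xi_{r_1,t_s}$ and $\gamma_{k,s}$ with $\|\gamma_{k,s}\|=O(t_s)$ and $\gamma_{k,s}\Phi_k(\eta_{1,s}z_1)\to h_kz_1$. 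By induction on $m$, applied to the functions with $\sigma(k)\ge 2$ after freezing $z_1=r_1$ (or, more precisely, treating them with $z_1$ still moving), we obtain sequences for the other variables. The key extra requirement is that the remaining variables approach their targets much faster: we choose $\zeta_{l,s}$ for $l\ge2$ with $|\zeta_{l,s}z-r_l|=o(t_s^{-2})$ uniformly on compacts. This is possible by passing to subsequences in the other constructions, since the convergence rates there are independent of $t_s$. Then
\[
|F_k(\eta_{1,s}z_1,\zeta_{2,s}z_2,\dots)-\Phi_k(\eta_{1,s}z_1)|=o(t_s^{-2}),
\]
and the derivative bound $|\gamma_{k,s}'|=O(t_s^2)$ at points near $\Phi_k(\eta_{1,s}z_1)$, taken from \Cref{claim: gamma_s^i bound}, together with the Taylor expansion argument used in the proof of \Cref{prop: gamma F eta to h with  conditions}, shows that $\gamma_{k,s}F_k(\zetabar_s\zbar)\to h_kz_1$.

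\textbf{Main obstacle.} The hardest step is making the multi-speed induction consistent. The functions assigned to later variables must converge even though the earlier variables are still moving, while the functions assigned to earlier variables must be insensitive to the later ones. I would handle this by processing variables in order of decreasing speed. The slowest variable is treated first, with its sequence fixed. Each faster variable is then chosen along a subsequence fast enough relative to all $\|\gamma_{k,s}\|^2$ already fixed. For functions assigned to a faster variable $z_l$, the slower variables, at stage $s$, sit at the points $\zeta_{i,s}z_i$, which are close to $r_i$. So we apply the one-variable proposition to $z_l\mapsto F_k(\zeta_{1,s}z_1,\dots,z_l,r_{l+1},\dots)$. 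This depends on $z_1,\dots,z_{l-1}$ through parameters, so we need a version of \Cref{prop: gamma F_1...F_N eta to h with conditions single variable} that is uniform in the parameters. I would try first to obtain this uniformity from the explicit second-order Möbius approximation $g_k$, which depends continuously on parameters, together with the openness of the approximation in Ratner's equidistribution.

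Finally, we conjugate by $\nu_s^{-1}$ to land in $\hat{\Gamma}$, as in \Cref{thm: gamma F eta to h}.
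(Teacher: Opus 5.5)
\textbf{Gap: the choice of $\sigma$.} The gap is the one you flag as the ``main obstacle''. Your rule lets $\sigma(k)$ be \emph{any} variable on which $F_k$ depends; your countability argument shows every such variable qualifies. So a function may be assigned to a faster variable $z_l$ while depending on a slower variable $z_i$. In that situation no parameter-uniform version of the one-variable result can exist.

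To see why, note that $\xi_{r,t}z-r=-t^{-1}+\frac{1}{t(1-t(z-r))}$. Hence, if $\partial F_k/\partial z_i(\rbar)\neq 0$, moving $z_i$ in a compact set moves $F_k(\zetabar_s\zbar)$ by $\asymp t_{i,s}^{-2}$. Meanwhile $\gamma_{k,s}$ must blow up the $z_l$-variation, which has size $\asymp t_{l,s}^{-2}\ll t_{i,s}^{-2}$, to size $O(1)$. Because $\gamma_{k,s}$ is injective, this rules out a limit independent of $z_i$. Fix $z_i^a\neq z_i^b$. Hurwitz's theorem gives points $z_l',z_l''\to z_l^0$ at which the two one-variable maps both take the value $h_kz_l^0$. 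The corresponding arguments of $\gamma_{k,s}$ must then coincide, which forces $|\zeta_{i,s}z_i^a-\zeta_{i,s}z_i^b|=o(\sup|\zeta_{l,s}'|)$. In other words, $z_i$ would have to be the \emph{faster} variable.

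A concrete counterexample: $F_1=z_1+\sqrt2\,z_2$ and $F_2=z_1+\sqrt3\,z_2$ satisfy all the hypotheses, and your rule allows $\sigma(1)=1,\ \sigma(2)=2$. For this $\sigma$ the conclusion is false: $F_1$ forces $z_2$ to be faster than $z_1$, and $F_2$ forces the reverse. In addition, the frozen function $z_l\mapsto F_k(\zeta_{1,s}z_1,\dots,z_l,r_{l+1},\dots)$ has non-real parameters. It is therefore not real on $\R$, and the one-variable proposition does not even apply to it.

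\textbf{The missing idea.} The speed ordering must dictate $\sigma$: $\sigma(k)$ has to be the \emph{slowest} variable on which $F_k$ depends. The paper makes $z_1,\dots,z_m$ successively faster and sets $\sigma(k)=\min\{i:\partial F_k/\partial z_i\not\equiv0\}$. Then each $F_k$ depends only on $z_{\sigma(k)}$ and faster variables.

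In the paper's induction step, the functions with $\sigma(k)=m$ depend on $z_m$ alone and are handled by \Cref{prop: gamma F_1...F_N eta to h with conditions single variable}. For every other $k$, the paper applies the induction hypothesis to $F_k(z_1,\dots,z_{m-1},r_m)$; the needed genericity at $(r_1,\dots,r_{m-1})$ follows from a single Baire-generic choice of $\rbar$. It then passes to a subsequence of the $z_m$-sequences so that the freezing error is at most $t_{\sigma(k),s}^{-3}$, which the Taylor estimate absorbs.

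In your processing order, this means putting $\sigma(k)=1$ for \emph{every} $k$ that depends on $z_1$, and recursively the same for $z_2,\dots,z_m$. The remaining functions then do not involve $z_1$ at all, so the induction on $z_2,\dots,z_m$ needs no parameters. You then speed up $z_2,\dots,z_m$ along a common subsequence. With this $\sigma$, the rest of your outline goes through: the case $m=1$, the estimate for the functions assigned to the slowest variable, the diagonal argument over compacts and generic points, and the replacement of $\eta_{l,s}$ by $\nu_{l,s}^{-1}$.
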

  \addtocounter{theorem}{-1}

\begin{proof}
    For each $i=1,\ldots,m$ set \[ L_i:= \left\{ k : \frac{\partial F_k}{\partial z_i} \not\equiv 0 \right\},~ E_i := L_i \setminus \bigcup_{l<i} L_l.\]
    Then $E_1,\ldots,E_m$ give a partition of $\{ 1, \ldots, N \}$ (although some of these sets may be empty). Define a function $\sigma: \{ 1, \ldots, N \} \to \{ 1, \ldots, m \} $ mapping each element of $E_i$ to $i$. For this $\sigma$ we have $\frac{\partial F_{k}}{\partial z_{\sigma(k)}} \not\equiv 0$ for all $k$. We will show that this $\sigma$ satisfies the desired properties.

For each $i \in \{ 1, \ldots, m\}, q \in \Q, g \in \hat{C}$, consider the set
    \begin{align*}
        R_{i,q,g}:= & \bigg\{  \rbar \in B:   \text{ for all } k \in L_i \left( \frac{\partial F_k}{\partial z_i}(\rbar) \neq  0  \text{ and } F_k(\rbar) \neq q \right) \text{ and } \\
        &  \text{for all } k \in L_i (F_k(\rbar) \neq gr_i) \text{ and for all distinct } k,l\in L_i \big( F_k(\rbar) \neq gF_l(\rbar)  \big)\bigg\}.
    \end{align*}
    Each $R_{i,q,g}$ is a finite intersection of open dense sets. By the Baire category theorem, $R:=(\R \setminus \Q)^m \cap \bigcap_{i,q,g} R_{i,q,g}$ is dense in $B$.

\begin{claim}
    Let $K \seq \h^m$ be compact. For every $\rbar \in R$ and every $\habar  \in \hat{G}^N$ there are sequences $\gamma_{k,s}, \nu_{l,s}\in \hat{\Gamma}, \eta_{l,s} \in H_{r_l}, k=1, \ldots,N, l=1, \ldots, m, s \in \Z_{>0},$ with the following properties:
 \begin{itemize}
        \item $\eta_{l,s} = \xi_{r_l,t_{l,s}}$ with $t_{l,s} \to +\infty$ as $s \to \infty$;
        \item $\nu_{l,s} \eta_{l,s} \to I$;
        \item $\lVert \gamma_{k,s} \rVert = O( t_{\sigma(k),s} )$;
        \item as $s \to \infty$, we have $\gamma_{k,s} (F_k (\etabar_s \zbar)) \to h_kz_{\sigma(k)}$ uniformly for $\zbar \in K$, where $\etabar_s \zbar := (\eta_{1,s}z_1, \ldots, \eta_{m,s}z_m)$.
 \end{itemize} 
\end{claim}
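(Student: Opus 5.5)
We argue by induction on $m$, following the overview in the introduction. For $m=1$ the claim is exactly \Cref{prop: gamma F_1...F_N eta to h with conditions single variable}: every $F_k$ depends on $z_1$, and $\rbar\in R$ ensures $F_k'(r_1)\neq 0$, $F_k(r_1)\notin \hat{C}r_1\cup\Q$ and $F_k(r_1)\notin\hat{C}F_l(r_1)$. For general $m$ we treat the variable $z_1$ first. Consider the one-variable functions $f_k(z_1):=F_k(z_1,r_2,\ldots,r_m)$ for $k\in E_1=L_1$. They are holomorphic near $r_1$ and real on reals. Since $\rbar\in R_{1,q,g}$ for all $q,g$, the hypotheses of \Cref{prop: gamma F_1...F_N eta to h with conditions single variable} hold at $r_1$. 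Applying it with the targets $h_k$ gives $t_{1,s}\to\infty$, $\eta_{1,s}=\xi_{r_1,t_{1,s}}$, $\nu_{1,s}$, and $\gamma_{k,s}$ with $\|\gamma_{k,s}\|=O(t_{1,s})$ and $\gamma_{k,s}f_k(\eta_{1,s}z_1)\to h_kz_1$ uniformly on the projection of $K$.

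\textbf{Remaining variables.} For the other coordinates we use the induction hypothesis on the functions $F_k$, $k\notin E_1$, in the variables $z_2,\ldots,z_m$ with $z_1$ frozen at $r_1$. These functions do not depend on $z_1$, because $\partial F_k/\partial z_1\equiv 0$. The sets $E_i$ and the set $R$ restrict appropriately, so this yields $\eta_{l,s}$, $\nu_{l,s}$ for $l\ge2$ and $\gamma_{k,s}$ for $k\notin E_1$. The key freedom is that we may pass to subsequences and reindex each $\eta_{l,s}$. We do this so that the times grow fast enough, say $t_{l,s}\ge t_{1,s}^4$ and, more generally, $t_{l,s}$ is much larger than a fixed power of every earlier $t_{i,s}$. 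To arrange this, we take the later sequences at indices chosen along the way, which is possible since each $t_{l,s}\to\infty$ and convergence holds along any subsequence.

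\textbf{Perturbation estimate.} For $k\in E_1$ we compare $F_k(\etabar_s\zbar)$ with $f_k(\eta_{1,s}z_1)$. By the Claim in the proof of \Cref{prop: gamma F eta to h with  conditions}, $|\eta_{l,s}z_l-r_l|=O_K(t_{l,s}^{-1})$. Since $F_k$ is holomorphic, hence Lipschitz near $\rbar$, the difference is $O(\max_{l\ge2}t_{l,s}^{-1})=O(t_{1,s}^{-4})$. Next we need a bound on $\gamma_{k,s}'$ near $w=f_k(\eta_{1,s}z_1)$. From the proof of Claim~\ref{claim: gamma_s^i bound} (with $g_k$ replaced by $f_k$, which are $O(t^{-3})$-close) we get $|\gamma_{k,s}^{(i)}(w)|\le i!A^it_{1,s}^{2i}$. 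The Taylor series argument then bounds $|\gamma_{k,s}F_k(\etabar_s\zbar)-\gamma_{k,s}f_k(\eta_{1,s}z_1)|$ by $\sum_i (A t_{1,s}^2\cdot Ct_{1,s}^{-4})^i\to0$. This is the main obstacle: it is where the bound $\|\gamma_{k,s}\|=O(t_{\sigma(k),s})$ is needed, and why we carry it in the claim. For $k\notin E_1$ with $\sigma(k)=i$, the same argument applies inductively: $F_k$ does not depend on $z_1,\ldots,z_{i-1}$, and the errors in $z_l$ for $l>i$ are $O(t_{i,s}^{-4})$, which again beats $\|\gamma_{k,s}\|^2=O(t_{i,s}^2)$.

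Finally, the bounds $\|\gamma_{k,s}\|=O(t_{\sigma(k),s})$ and $\nu_{l,s}\eta_{l,s}\to I$ are inherited from the one-variable statements. In the induction we should therefore state the claim for the slightly stronger requirement that the errors be $O(t^{-3-\epsilon})$ relative to the relevant $t$, so that the nesting works.
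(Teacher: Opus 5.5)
Your argument is correct, but it runs the induction from the other end. The paper freezes the \emph{last} variable. It applies the induction hypothesis to $F_k(z_1,\ldots,z_{m-1},r_m)$ for $k\in\bigcup_{i<m}E_i$, functions that may still depend on $z_m$, and applies \Cref{prop: gamma F_1...F_N eta to h with conditions single variable} to the block $E_m$ of functions of $z_m$ alone. It then speeds up only the independent $z_m$-block ($\eta_{m,s}$, $\nu_{m,s}$, $\gamma_{i,s}$ for $i\in E_m$). This forces it to perturb the multi-variable block, which needs a multivariable adaptation of \Cref{claim: gamma_s^i bound}; that is why the bound $\|\gamma_{k,s}\|=O(t_{\sigma(k),s})$ has to be carried through the induction.

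You instead freeze $z_2,\ldots,z_m$ for the block $E_1$. The functions you feed to the induction hypothesis do not depend on $z_1$, so they need no perturbation at all. The only perturbation is of one-variable restrictions, where the derivative bounds are literally those of \Cref{claim: gamma_s^i bound}. So in your version the norm bound is needed only from the one-variable proposition, and is merely inherited for $k\notin E_1$.

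The price is that the family produced by the induction hypothesis ($\eta_{l,s},\nu_{l,s}$ for $l\ge 2$ and $\gamma_{k,s}$ for $k\notin E_1$) is coupled. It must therefore be sped up along one common subsequence, not by reindexing each $\eta_{l,s}$ separately as your wording suggests; separate reindexing can destroy the joint convergence the hypothesis provides.

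Read this way, your closing proposal to strengthen the claim to $O(t^{-3-\epsilon})$ errors is unnecessary. Your remarks on $k\notin E_1$ in fact describe a non-inductive proof: apply the one-variable proposition separately to each block $E_i$, with $z_{i+1},\ldots,z_m$ frozen at $r_{i+1},\ldots,r_m$ (these functions do not depend on $z_1,\ldots,z_{i-1}$). Then choose increasing indices so that $t_{i+1,s}\ge t_{i,s}^4$.

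One small omission: if $E_1=\emptyset$ you still need $\eta_{1,s},\nu_{1,s}$, which you can take from \Cref{cor: approx. g_0...g_N extra conditions}.
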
 
\begin{claimproof}
    We proceed by induction on $m$, the $m=1$ case being covered by \Cref{prop: gamma F_1...F_N eta to h with conditions single variable}.

    For $m> 1$ let $f_k(z_1, \ldots, z_{m-1}) := F_k(z_1, \ldots, z_{m-1}, r_m)$. If $k \in E:= \bigcup_{i<m} E_i$ then $f_k$ is non-constant. Let the sequences $\gamma_{k,s}, \nu_{l,s}, \eta_{l,s} (k \in E, 1 \leq l < m)$ be given by the induction hypothesis for $f_k, k \in E$ and for $h_k, k \in E$.

    The functions $F_k$ for $k \in E_m$ (assuming $E_m \neq \emptyset$) depend only on $z_m$. Let  the sequences $\nu_{m,s}, \eta_{m,s}$ and $\gamma_{k,s}$ for $ k \in E_m$ be given by \Cref{prop: gamma F_1...F_N eta to h with conditions single variable} for these functions and for $h_k, k \in E_m$. When $E_m = \emptyset$, we just pick arbitrary sequences $\nu_{m,s} \in \hat{\Gamma}, \eta_{m,s} = \xi_{r_m, t_{m,s}}$ satisfying $\nu_{m,s} \eta_{m,s} \to I$ by \Cref{cor: approx. g_0...g_N extra conditions}.

    Note that since $f_k$ does not depend on $z_m$, the expression $f_k (\etabar_s\zbar)$ should be understood as $f_k (\eta_{1,s} z_1, \ldots, \eta_{m-1,s} z_{m-1})$, that is, we think of $f_k$ as a function of $z_1, \ldots, z_m$ which, however, does not depend on $z_m$. 

    For each $k \in E$ (which implies $\sigma(k) < m$), continuity of $F_k$ and compactness of $K \seq \h^m$ allow us to replace $\eta_{m,s}$ by a subsequence (and $\nu_{m,s}, \gamma_{i,s}$ for $i \in E_m$ by the corresponding subsequences), such that for any $s\in \Z_{>0}$
\[  \sup_{\zbar \in K} |F_k(\eta_{1,s} z_1, \ldots, \eta_{m,s} z_m) - F_k(\eta_{1,s} z_1, \ldots, \eta_{m-1,s} z_{m-1}, r_m)| \leq t_{\sigma(k),s}^{-3}. \]

The induction hypothesis ensures that $\gamma_{k,s} (f_k (\etabar_s \zbar)) \to h_kz_{\sigma(k)}$ uniformly for $z \in K$ whenever $k \in E$. An obvious adaptation of the proof of \Cref{claim: gamma_s^i bound} in the proof of \Cref{prop: gamma F eta to h with  conditions} can be used to show that there is a constant $A_K$ such that for all $k \in E$ and for all $i, s \in \Z_{>0}$ \[ |\gamma_{k,s}^{(i)} (f_k (\etabar_s\zbar))| \leq  i! A_K^i t_{\sigma(k),s}^{2i}.\]

The Taylor expansion of $\gamma_{k,s} (w)$ for $k \in E$ then yields
    \begin{align*}
        & |\gamma_{k,s}(F_k(\etabar_{s} \zbar)) - \gamma_{k,s}(f_k(\etabar_{s} \zbar)) |  \leq  \sum_{i=1}^\infty \frac{|\gamma^{(i)}_{k,s}(f_k(\etabar_{s} \zbar))|}{i!}\cdot |F_k(\etabar_{s} \zbar) - f_k(\etabar_{s} \zbar)|^i    \\           &         \leq \sum_{i=1}^\infty A_K^i t_{\sigma(k),s}^{2i}\cdot t_{\sigma(k),s}^{-3i} 
                                                         = \sum_{i=1}^\infty (A_K t_{\sigma(k),s}^{-1})^i
                                                        = \frac{A_Kt_{\sigma(k),s}^{-1}}{1 - A_Kt_{\sigma(k),s}^{-1}} \to 0 \text{ as } s \to \infty.
    \end{align*}

    This finishes the proof of the claim.
    \end{claimproof}

    We can now proceed as in the proof of \Cref{thm: gamma F eta to h} and use a diagonal argument to extend the conclusion to every point of $B$ by the density of $R$ in $B$, use a compact exhaustion of $\h^m$ to obtain locally uniform convergence, and  replace $\eta_{l,s}$ with $\zeta_{l,s} := \nu_{l,s}^{-1}$.
\end{proof}

\appendix

\section{Visualising complex functions and their zeros}\label{appendix}

Complex functions of a single variable can be visualised via domain colouring. We encode a complex number by a colour where the hue, chosen from a colour wheel, corresponds to the argument, while the brightness encodes the modulus. For a function $L$, the hue assigned to $z$ is determined by $\arg L(z)$ using the colour wheel in \Cref{fig:key}, and the level curves (contours of modulus) where $|L(z)|$ is constant are marked, with brightness changing and resetting as $z$ moves from one contour to the next one as shown in the diagram in \Cref{fig:key}. Within each band, points with larger values of $|L(z)|$ appear brighter.

 \begin{figure}[htbp]
    \centering 
    \includegraphics[width=0.7\linewidth]{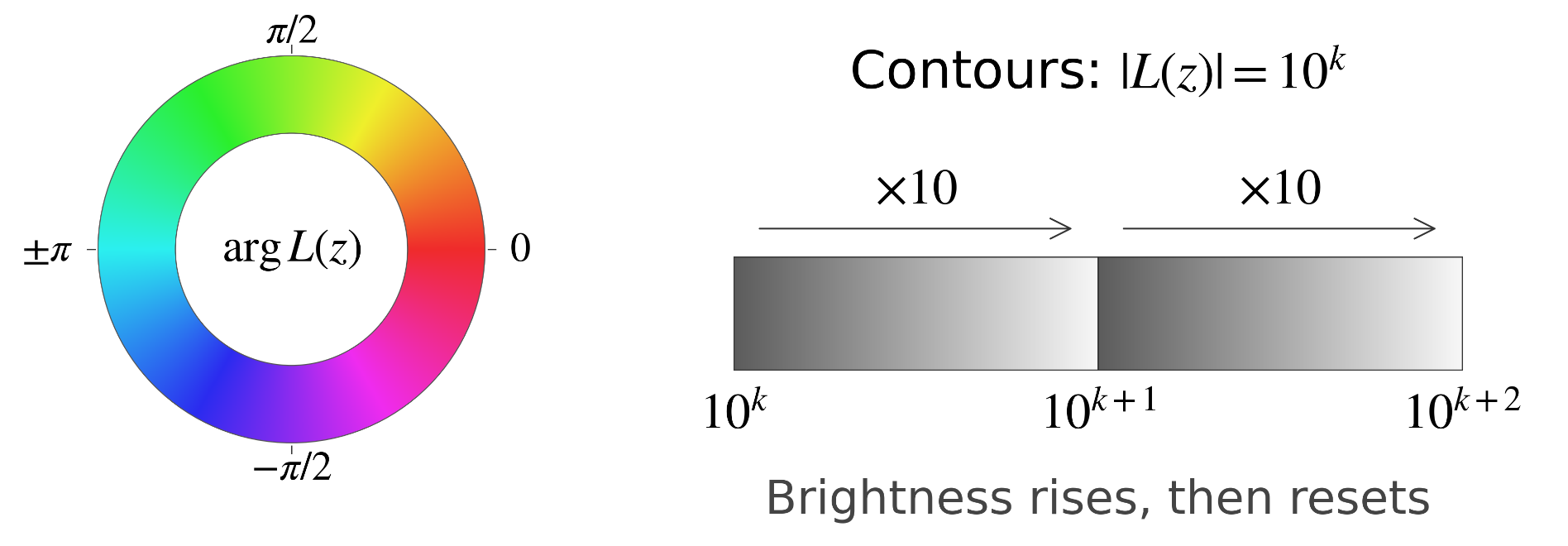} 
    \caption{Colour key}\label{fig:key}
\end{figure}

 In \Cref{fig:j(z)-j(z^2)-1,fig:z^3+j(z)+...} we plot two functions considered in \S\S\ref{subsec: n=2 case}--\ref{subsec: simple example}.\footnote{The Python code used to produce the images was generated and run by ChatGPT 6 Astra.}  

 \begin{figure}[htbp]
    \centering 
    \includegraphics[width=1\linewidth]{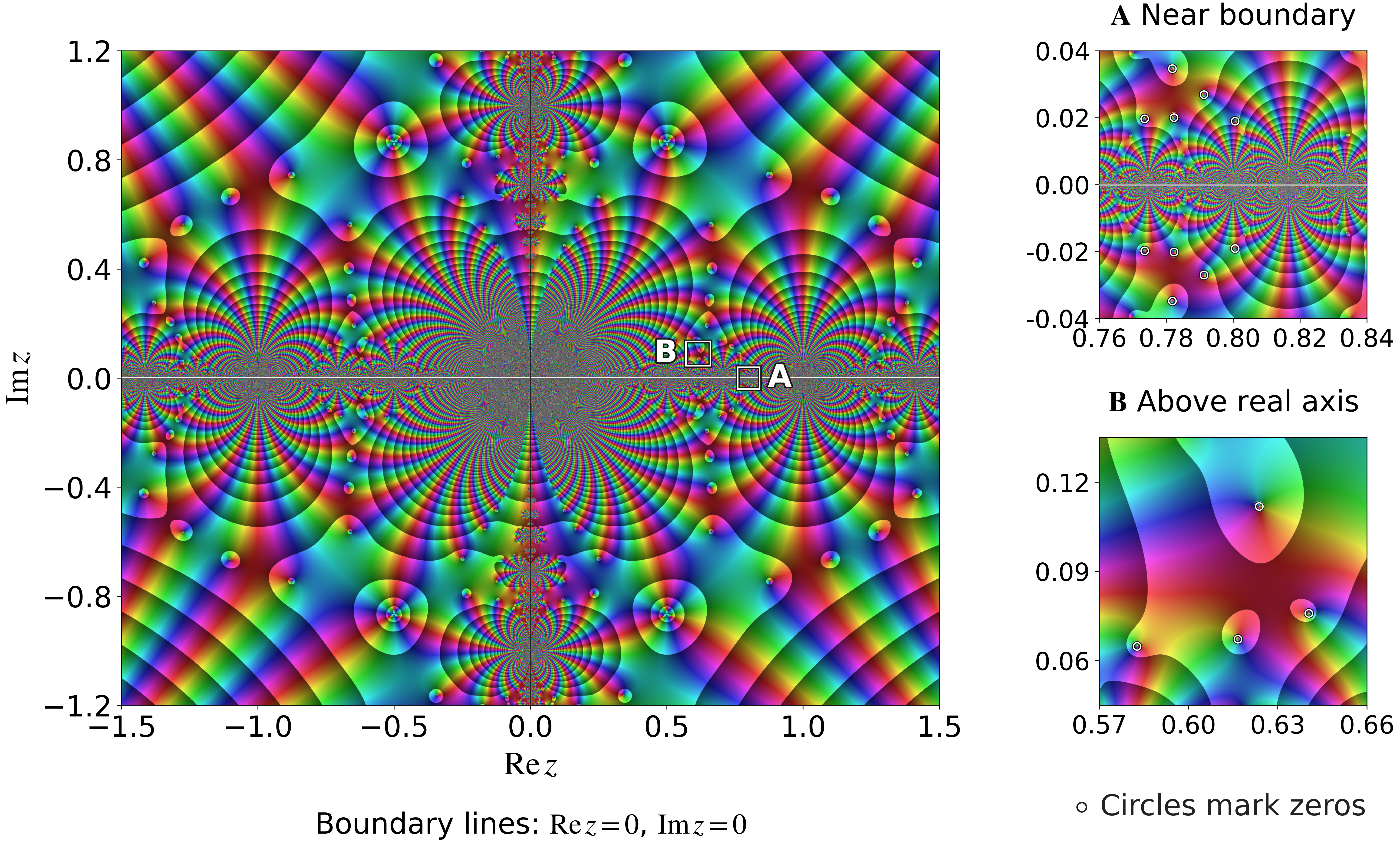} 
    \caption{$L(z)= j(z) - j(z^2)-1$}\label{fig:j(z)-j(z^2)-1}
\end{figure}

 \begin{figure}[htbp]
    \centering 
    \includegraphics[width=1\linewidth]{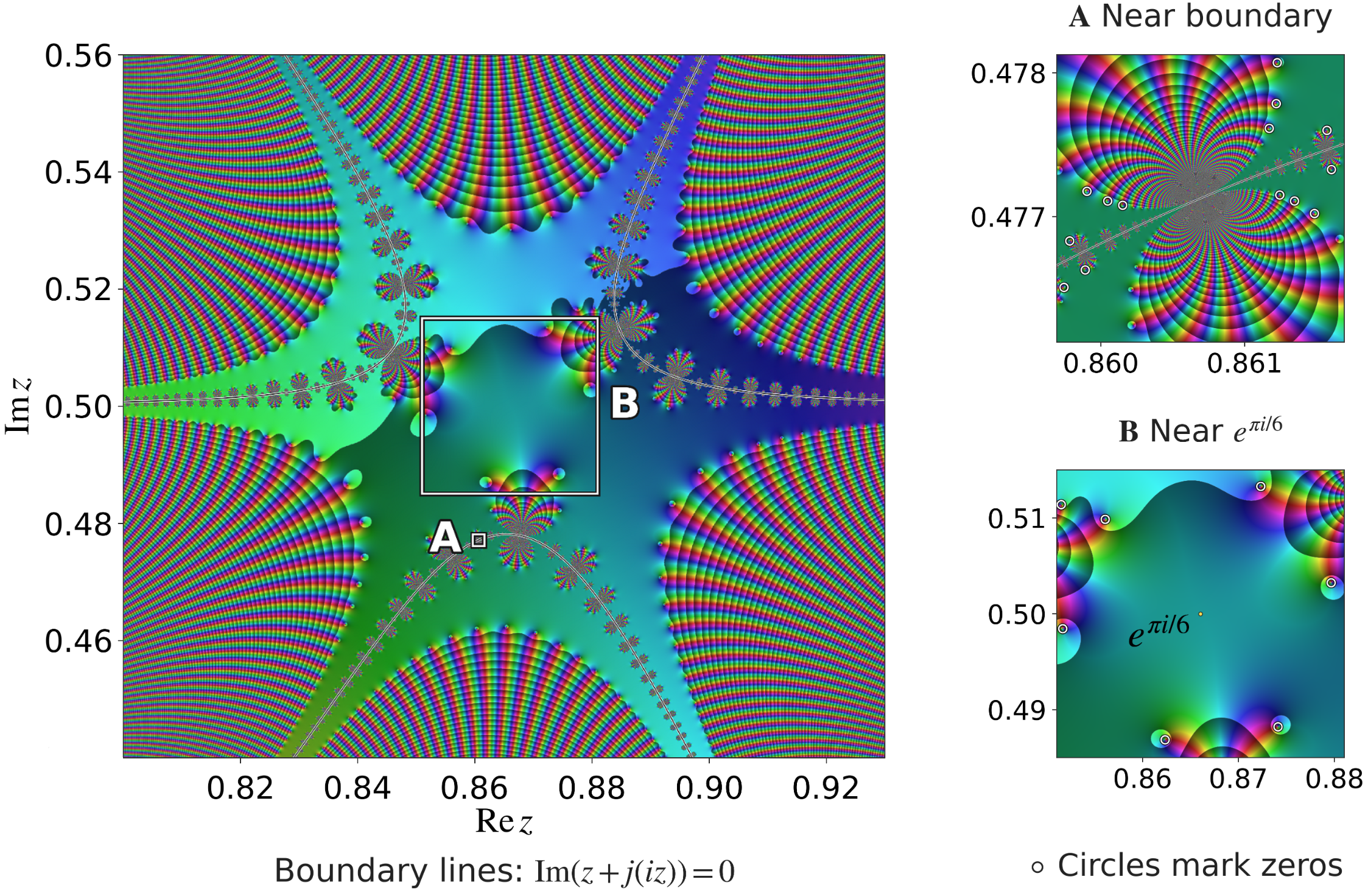} 
    \caption{$L(z)= z^3+j(z) + j(iz)^2 + j(z+j(iz))$}
    \label{fig:z^3+j(z)+...}
\end{figure}

As we can see in both figures, the behaviour of the function is wild near the boundary lines where the input of $j$ is real. The magnified regions display more detail near the boundary and away from it, also marking some of the zeros in these regions. Our proof of \Cref{thm: EC full} shows that broad and free systems of equations have infinitely many solutions near real points. This can be seen in \Cref{fig:j(z)-j(z^2)-1} -- every point where different colours converge is a zero of $L$. 

In \Cref{fig:z^3+j(z)+...}, the domain does not include the real and imaginary axes which, along with $\im (z+j(iz)) = 0$, are the curves where $L$ is undefined. The behaviour of the function near the real and imaginary axes is quite wild which does not allow for a meaningful image with enough detail. Nevertheless, the plot still suggests that in the displayed region the zeros accumulate near the boundary curves $\im (z+j(iz)) = 0$.

\bibliographystyle{alphaurl}
\bibliography{references.bib}

\end{document}

%% file: gl2z-domains-tikz.tex

\begin{tikzpicture}[
  x=2.5cm,y=2.5cm,
  tile/.style={
    draw=black!65,
    line width=0.22pt,
    line join=round
  },
  domain boundary/.style={
    draw=domainRed,
    line width=0.8pt
  },
  excluded boundary/.style={
    domain boundary,
    line cap=round,
    dash pattern=on 0pt off 2.4pt,
    preaction={draw=white,solid,line width=1.35pt}
  },
  axis/.style={
    line width=0.55pt,
    -{Stealth[length=6pt,width=4.5pt]}
  },
  tick label/.style={
    font=\scriptsize\boldmath,
    text=labelBlue,
    inner sep=0.35pt,
    outer sep=0pt,
    anchor=north,
    yshift=-8pt
  },
  font=\small
]

\definecolor{tileA}{RGB}{200,217,210}
\definecolor{tileB}{RGB}{180,220,250}
\definecolor{domainRed}{RGB}{173,38,61}
\definecolor{domainFill}{RGB}{252,229,233}
\definecolor{labelBlue}{RGB}{0,72,170}
\definecolor{labelRed}{RGB}{128,20,45}

\def\Cmax{8}
\def\Dmax{60}
\def\Xmax{3.45}
\def\Ymax{2.05}

\newcommand{\Geodesic}[5]{%
  \pgfmathparse{abs((#1)-(#3)) < 0.00002}%
  \ifdim\pgfmathresult pt>0pt
    \edef#5{-- (#3,#4)}%
  \else
    \pgfmathsetmacro{\gcentre}{%
      ((#1)+(#3))/2
      + (((#2)-(#4))/((#1)-(#3)))*(((#2)+(#4))/2)}%
    \pgfmathsetmacro{\gradius}{%
      veclen((#1)-\gcentre,#2)}%
    \pgfmathsetmacro{\gstart}{%
      atan2(#2,(#1)-\gcentre)}%
    \pgfmathsetmacro{\gend}{%
      atan2(#4,(#3)-\gcentre)}%
    \edef#5{arc[
      start angle=\gstart,
      end angle=\gend,
      radius=\gradius
    ]}%
  \fi
}

\newcommand{\FiniteTile}[4]{%
  \pgfmathsetmacro{\cusp}{#1/#3}%
  \pgfmathsetmacro{\denL}{#3*#3-#3*#4+#4*#4}%
  \pgfmathsetmacro{\denR}{#3*#3+#3*#4+#4*#4}%
  \pgfmathsetmacro{\xL}{%
    (#1*#3+#2*#4-(#1*#4+#2*#3)/2)/\denL}%
  \pgfmathsetmacro{\xR}{%
    (#1*#3+#2*#4+(#1*#4+#2*#3)/2)/\denR}%
  \pgfmathsetmacro{\yL}{sqrt(3)/(2*\denL)}%
  \pgfmathsetmacro{\yR}{sqrt(3)/(2*\denR)}%

  \Geodesic{\cusp}{0}{\xL}{\yL}{\edgeA}%
  \Geodesic{\xL}{\yL}{\xR}{\yR}{\edgeB}%
  \Geodesic{\xR}{\yR}{\cusp}{0}{\edgeC}%

  \pgfmathsetmacro{\tilelinewidth}{%
    max(0.025,min(0.22,14/(#3*#3+#4*#4)))}%

  \foreach \n in {-4,...,4}{%
    \pgfmathtruncatemacro{\parity}{%
      mod(#1+#2+#3+#4+#1*#4+1+\n,2)}%
    \ifnum\parity=0
      \def\tilecolour{tileA}%
    \else
      \def\tilecolour{tileB}%
    \fi
    \begin{scope}[shift={(\n,0)}]
      \path[
        tile,
        line width=\tilelinewidth pt,
        fill=\tilecolour
      ]
        (\cusp,0) \edgeA \edgeB \edgeC -- cycle;
    \end{scope}%
  }%
}

\foreach \sgn in {1,-1}{
  \begin{scope}[yscale=\sgn]
    \clip (-\Xmax,0) rectangle (\Xmax,\Ymax);

    \path[fill=tileA!50!tileB,draw=none]
      (-\Xmax,0) rectangle (\Xmax,\Ymax);

    \foreach \n in {-4,...,4}{
      \pgfmathtruncatemacro{\parity}{mod(\n,2)}
      \ifnum\parity=0
        \def\tilecolour{tileA}
      \else
        \def\tilecolour{tileB}
      \fi
      \path[tile,fill=\tilecolour]
        ({\n-0.5},\Ymax)
        -- ({\n-0.5},{sqrt(3)/2})
        arc[start angle=120,end angle=60,radius=1]
        -- ({\n+0.5},\Ymax)
        -- cycle;
    }

    \foreach \c in {1,...,\Cmax}{
      \pgfmathtruncatemacro{\amax}{\c-1}
      \pgfmathtruncatemacro{\Dlocal}{%
        max(\Cmax,ceil(\Dmax/\c))}

      \foreach \d in {-\Dlocal,...,\Dlocal}{
        \foreach \a in {0,...,\amax}{
          \pgfmathtruncatemacro{\residue}{%
            mod(\a*\d-1,\c)}
          \ifnum\residue=0
            \pgfmathtruncatemacro{\b}{(\a*\d-1)/\c}
            \FiniteTile{\a}{\b}{\c}{\d}
          \fi
        }
      }
    }

    \path[fill=domainFill]
      (-0.5,\Ymax)
      -- (-0.5,{sqrt(3)/2})
      arc[start angle=120,end angle=60,radius=1]
      -- (0.5,\Ymax)
      -- cycle;

    \path[
      pattern=north east lines,
      pattern color=domainRed!30
    ]
      (-0.5,\Ymax)
      -- (-0.5,{sqrt(3)/2})
      arc[start angle=120,end angle=60,radius=1]
      -- (0.5,\Ymax)
      -- cycle;

    \ifnum\sgn=1
      \tikzset{
        left boundary/.style={domain boundary},
        right boundary/.style={excluded boundary}
      }
    \else
      \tikzset{
        left boundary/.style={excluded boundary},
        right boundary/.style={domain boundary}
      }
    \fi

    \draw[left boundary]
      (-0.5,\Ymax)
      -- (-0.5,{sqrt(3)/2})
      arc[start angle=120,end angle=90,radius=1];

    \draw[right boundary]
      (0,1)
      arc[start angle=90,end angle=60,radius=1]
      -- (0.5,\Ymax);
  \end{scope}
}


\draw[axis]
  (-3.78,0) -- (3.78,0)
  node[right] {\phantom{$\operatorname{Re}z$}}
  node[right,overlay] {\scalebox{1.5}{$\operatorname{Re}z$}};

\node[left] at (-3.78,0)
  {\phantom{$\operatorname{Re}z$}};

\draw[axis]
  (0,-2.18) -- (0,2.22)
  node[above] {\scalebox{1.5}{$\operatorname{Im}z$}};

\foreach \n in {-3,-2,-1,1,2,3}{
  \draw (\n,0.035) -- (\n,-0.035);
  \node[tick label] at (\n,0) {\scalebox{1.5}{$\n$}};
}

\node[
  tick label,
  anchor=north east,
  xshift=-1.5pt
] at (0,0) {\scalebox{1.5}{$0$}};

\fill (0,1) circle[radius=0.8pt];
\fill (0,-1) circle[radius=0.8pt];

\node[
  font=\scriptsize\boldmath,
  text=labelBlue,
  inner sep=0.5pt,
  above left
] at (0,1) {\scalebox{1.5}{$i$}};

\node[
  font=\scriptsize\boldmath,
  text=labelBlue,
  inner sep=0.5pt,
  below left
] at (0,-1) {\scalebox{1.5}{$-i$}};

\node[
  font=\scriptsize\boldmath,
  text=labelRed,
  inner sep=1pt
] at (0.26,1.58) {\scalebox{1.5}{$\mathcal F_+$}};

\node[
  font=\scriptsize\boldmath,
  text=labelRed,
  inner sep=1pt
] at (0.26,-1.58) {\scalebox{1.5}{$\mathcal F_-$}};

\node at (3.03,1.77)
  {\scalebox{1.5}{$\mathbb H_{+}$}};

\node at (3.03,-1.77)
  {\scalebox{1.5}{$\mathbb H_{-}$}};

\end{tikzpicture}